\documentclass[pdflatex,sn-mathphys-num]{sn-jnl}% Math and Physical Sciences Numbered Reference Style
\usepackage{graphicx}%
\usepackage{multirow}%
\usepackage{amsmath,amssymb,amsfonts}%
\usepackage{amsthm}%
\usepackage{mathrsfs}%

\DeclareFontFamily{U}{rsfs}{\skewchar\font127}
\DeclareFontShape{U}{rsfs}{m}{n}{%
  <-6> rsfs5
  <6-8> rsfs7
  <8-> rsfs10
}{}

\usepackage[title]{appendix}%
\usepackage{xcolor}%
\usepackage{textcomp}%
\usepackage{manyfoot}%
\usepackage{booktabs}%
\usepackage{algorithm}%
\usepackage{algorithmicx}%
\usepackage{algpseudocode}%
\usepackage{listings}%
\usepackage{bm}
\renewcommand{\theequation}{\thesection.\arabic{equation}}

\makeatletter
\newcommand{\subalign}[1]{%
  \vcenter{%
    \Let@ \restore@math@cr \default@tag
    \baselineskip\fontdimen10 \scriptfont\tw@
    \advance\baselineskip\fontdimen12 \scriptfont\tw@
    \lineskip\thr@@\fontdimen8 \scriptfont\thr@@
    \lineskiplimit\lineskip
    \ialign{\hfil$\m@th\scriptstyle##$&$\m@th\scriptstyle{}##$\hfil\crcr
      #1\crcr
    }%
  }%
}
\makeatother

\theoremstyle{thmstyleone}%
\newtheorem{theo}{Theorem}%  meant for continuous numbers
\newtheorem{pro}[theo]{Proposition}% 

\theoremstyle{thmstyletwo}%
\newtheorem{exa}{Example}%
\newtheorem{rem}{Remark}%

\theoremstyle{thmstylethree}%
\newtheorem{defin}{Definition}%
\newtheorem{lem}{Lemma}%
\newtheorem{cor}{Corollary}%

\begin{document}

\title[Article Title]{Comparison of Dirichlet forms for stable-like random walks on groups of polynomial volume growth}

%%=============================================================%%
%% GivenName	-> \fnm{Joergen W.}
%% Particle	-> \spfx{van der} -> surname prefix
%% FamilyName	-> \sur{Ploeg}
%% Suffix	-> \sfx{IV}
%% \author*[1,2]{\fnm{Joergen W.} \spfx{van der} \sur{Ploeg} 
%%  \sfx{IV}}\email{iauthor@gmail.com}
%%=============================================================%%

\author*[1]{\fnm{Laurent} \sur{Saloff-Coste}}\email{lsc@math.cornell.edu}

\author[2]{\fnm{Ruoqi} \sur{Zhang}}\email{rz433@cornell.edu}
\equalcont{These authors contributed equally to this work.}

\affil*[1]{\orgdiv{Math Department}, \orgname{Cornell University}, \orgaddress{\street{212 Garden Ave}, \city{Ithaca}, \postcode{14853}, \state{NY}, \country{USA}}}

\affil[2]{\orgdiv{Math Department}, \orgname{Cornell University}, \orgaddress{\street{212 Garden Ave}, \city{Ithaca}, \postcode{14853}, \state{NY}, \country{USA}}}

%%==================================%%
%% Sample for unstructured abstract %%
%%==================================%%

\abstract{In earlier works, we studied natural examples of stable-like random walks on finitely generated nilpotent groups and, more generally, on groups of polynomial volume growth. These walks are driven by measures of radial type, coordinate-wise type, or convex combinations of such measures. In the present work, we explore when two such random walks have comparable behavior, as measured by the equivalence of their associated Dirichlet forms. We develop criteria for this equivalence in terms of both geometric and algebraic features of the driving measures and the group.}

\keywords{stable processes, random walks on groups, dirichlet forms, nilpotent groups}

%%\pacs[JEL Classification]{D8, H51}

\pacs[2020 MSC Classification]{Primary 60J10, 60J74; Secondary 60J46, 20F18}

\maketitle

\section{Introduction} In a number of previous works \cite{CKSCWZ,CKSCWZ1,SCRZ,SCZ-nil}, large families of symmetric random walks driven by probability measures featuring stable-like jumps on finitely generated nilpotent groups and groups having  polynomial volume growth have been considered and studied. The goal of the present work is to understand when two such measures have equivalent Dirichlet forms.
Following  ideas from the  works cited above,  to each of the stable-like measures mentioned above on a finitely generated nilpotent group, we associate a certain quasi-norm on the group as well as a descending central series of subgroups. We describe
a partition of this family of  measures into classes having equivalent Dirichlet forms where the classes in question are defined in terms of the associated quasi-norms or, equivalently, in terms of the associated descending series. In other words, these classes of measures can be equivalently described using the geometric concepts of quasi-norms, or the algebraic notion of central descending series and any two measures in the same class have equivalent Dirichlet forms. It is likely that measures from two distinct classes have non-equivalent Dirichlet forms but we have not been able to prove this except in special simple cases. We also explain how this classification works on finitely generated groups of polynomial volume growth in which case it is essential and unavoidable to use  the fact that the groups of polynomial volume growth are exactly those which contain a nilpotent subgroup with finite quotient (Gromov's theorem on finitely generated groups of polynomial volume growth).

Throughout this work we use the notation $f_1\asymp f_2$ between to nonnegative functions to indicate  there are constants $0<c,C<+\infty$ such that $cf_1\le f_2\le Cf_1$ on the common domain of these functions. We use $\preccurlyeq, \succcurlyeq$ for the related order relations.

\section{Key definitions}

\subsection{Quasi-norms} On a group $G$, a function $\|\cdot\|: G\mapsto [0,+\infty)$ is called a quasi-norm if it satisfies $\|g\|=0$ if and only if $g=e$, $\|g\|=\|g^{-1}\|$, and there exists a constant $C$ such that 
$\|gh\|\le C(\|g\|+\|h\|)$ for all $g,h\in G$. A quasi-norm is a norm when $C=1$.

 To any generating tuple $S=(s_1,\dots,s_k)\in G^k$ and weight tuple $\mathbf v=(v_1,\dots,v_k)\in (0,+\infty)^k$, we associate the quasi-norm
 \begin{equation}\label{QN}
     \|g\|_{S,\mathbf v}= \inf \left\{ \max_{s_i \in S}\left\{ \mbox{deg}_{s_i}(\theta)^{1/v_i} \right\}:  \theta = g \in G, \theta \in \bigcup_{m=0}^\infty [S\cup S^{-1}]^m\right\} \end{equation}
Here, $S\cup S^{-1}$ is interpreted as a finite formal alphabet, $\theta$ is a finite word over this alphabet, the equation $\theta=g$ is evaluation in $G$, and 
$$\mbox{deg}_{s_i}(\theta)=\#\{j: \theta_j\in \{s_i,s_i^{-1}\}\}$$ where $\theta=\theta_1\dots \theta_m.$ This definition is taken from \cite{SCZ-nil}. 

When $v_i=v$, $1\le i\le k$, we obviously have $$|g|^{1/v}_{S\cup S^{-1}}\le \|g\|_{S,\mathbf v}\le k|g|^{1/v}_{S\cup S^{-1}}$$ where $|\cdot|_{S\cup S^{-1}}$ is the word-length based on the symmetric generating set $S\cup S^{-1}$. That is $\|\cdot\|_{S,\mathbf v}\asymp |\cdot|^{1/v}_{S\cup S^{-1}}$ in this case. Note that $g\mapsto \|g\|_{S,\mathbf v}$
is a norm (a quasi-norm with $C=1$) if $v_i\ge 1$ for all $1\le i\le k$. We will mostly be interested in the case when $v_i>1/2$ when relating this definition to what we call stable-like measures.

\subsection{Stable-like measures}
 The family of stable-like symmetric probability measures of interest to us comprise two subfamilies and the convex combinations  of the union of the two families.  The first subfamily is the family of what we call {\em quasi-norm radial measures}. Let $G$ be equipped with a quasi-norm $\|\cdot\|=\|\cdot\|_{S,\mathbf v}$ as above with $v_i>1/2$ for $1\le i\le k$. Assume that
$$\#\{g\in G: \|g\|\le r\}\asymp r^\gamma$$ 
for some $\gamma\in (0,+\infty)$. We call {\em radial measure} associated to the quasi-norm $\|\cdot\|$ any symmetric probability measure $\rho=\rho_{S,\mathbf v}$  satisfying
 \begin{equation}\label{radial} \rho(g)\asymp (1+\|g\|)^{-1-\gamma}.
\end{equation}

As a simple example, assume that $G$ has polynomial volume growth of exponent $d$, that is,
$\#\{g:|g|_{S\cup S^{-1}}\le r\}\asymp r^d$, and consider the case when $\mathbf v=(v,\cdots,v)$ with $v=1/\alpha>1/2$. Then
$\|\cdot \|_{S,\mathbf v}\asymp |\cdot |^\alpha_{S\cup S^{-1}}$ and $\{g: \|g\|_{S,\mathbf v}\le r\}\asymp r^{\gamma}$ with $\gamma=d/\alpha$. In this case,
$$\rho_{S,\mathbf v}(g)\asymp (1+|g|_{S\cup S^{-1}})^{-\alpha-d}.$$

The second subfamily is the family of {\em coordinate-wise stable-like measures} defined as follow.
For any tuple of group elements (not necessarily generating) $S=(s_1,\ldots,s_k)$, consider the map $\pi_S$ that sends $ \bar{a}=(a_1,\ldots,a_k) \in \mathbb{Z}^k$ to $s_{1}^{a_1}\ldots s_{k}^{a_k} \in G$.  Given an exponent tuple $\bm{\alpha} = (\alpha_1,\ldots, \alpha_k) \in (0,2)^k$ (these are, in some sense, stability exponents), consider the symmetric probability measures $\nu_{S, \alpha}$  giving to any $h\in G$ a probability $\nu_{S,\bm{\alpha}}(h)$ proportional to (by convention, empty sums are equal to $0$)
\begin{align}
\label{cwsl}      
     \sum\limits_{ \bar{a}: 
      \pi_S(\bar{a})\in \{h,h^{-1}\} } \frac{1}{\left(1+ \sum_{i=1}^k |a_i|^{\alpha_i}\right)^{1 + \sum_{i=1}^k \frac{1}{\alpha_i}}} . 
\end{align}

\begin{exa}
  Let $N$ denote the group of \(3\times 3\) upper unitriangular matrices and let
$M_{ij}$ denotes the elementary matrix with a 1 on the diagonal and 
in the $(i,j)$-entry, and   zeros elsewhere. Take $S = (M_{23}, M_{12}, M_{13})$ and $\bm{\alpha} = (\alpha_1, \alpha_2,\alpha_3) \in (0,2)^3$. 
Then 
\begin{align*}
    \nu_{S,\bm{\alpha}}\left( \begin{pmatrix}
1 & a & c \\
0 & 1 & b \\
0 & 0 & 1
\end{pmatrix}\right) =  &\left( |b|^{\alpha_1} + |a|^{\alpha_2} + |c|^{\alpha_3} \right)^{1+ \sum_i \frac{1}{\alpha_i}} \\
& +  \left( |-b|^{\alpha_1} + |-a|^{\alpha_2} + |ab-c|^{\alpha_3} \right)^{1+ \sum_i \frac{1}{\alpha_i}} 
\end{align*}

\end{exa}

In addition of these two types of measures (radial and coordinate wise stable-like), we want to consider finite linear combinations of such measures, $$\nu=\sum_1^\ell c_i\nu_i, \;\;c_i>0,\;\; \sum_1^\ell c_i=1,$$ where each component $\nu_i$ is supported on  a subgroup $H_i$ of $G$ and is either radial on $H_i$ as in (\ref{radial}) or a coordinate-wise stable-like measure based on a generating tuple of $H_i$ as in (\ref{cwsl}). We assume that $G=\langle H_1,\dots, H_k\rangle$ so that the support of $\nu$ generates $G$. The simplest such convex combinations are the $1d$-singular stable-like measures
\begin{equation}\label{1d}\mu_{S,\bm{\alpha}}(g) \asymp \frac{1}{k}\sum_{i=1}^k \sum_{a\in \mathbb{Z}} \frac{c_{\alpha_i}\mathbf 1_{ s_i^a}(g)}{(1+|a|)^{1+\alpha_i}}.    
\end{equation}
Here, each component is the push-forward of the $1d$-stable-like symmetric probability measure 
$$c_{\alpha_i}(1+|a|)^{-1-\alpha_i} \asymp (1+|a|^{\alpha_i})^{-1-1/\alpha_i}, a\in \mathbb Z,$$ onto the discrete one-parameter subgroup $<s_i>$ of $ G$. We assume that $G=\langle s_1,\dots, s_k\rangle$. The class of $1d$-singular stable-like measures was introduced and studied explicitly in \cite{SCZ-nil}. The ideas and results of \cite{SCZ-nil} play an important role in the present work.

Given any symmetric probability measure $\mu$ on a (countable) group $G$, the associated Dirichlet form is
$$\mathcal E_\nu (f,f)=\frac{1}{2}\sum_{g,h\in G}|f(gh)-f(g)|^2\nu(h).$$

Our goal is to break down the collection of stable-like measure described above into subsets in which for any two measures $\mathcal E_{\mu_1}\asymp \mathcal E_{\mu_2}$. Ideally, we would like to describe the equivalence classes of our family of measures under the equivalence relation $\mu_1\sim \mu_2$ if and only if $\mathcal E_{\mu_1}\asymp \mathcal E_{\mu_2}$. It is likely that the partition we provide is made of those equivalence classes but this is not proved here.

\section{On a nilpotent group $N$}

In this section, our underlying group is a finitely generated nilpotent group $N$. The constructions described in this section {\em do not} translate straightforwardly to the case of groups of polynomial volume growth which will be discuss in a later section.

\subsection{Descending series and volume growth}
\label{sec: series}
Given a tuple $S=(s_1,\dots,s_k)\in N^k$ (viewed, together with their inverses, as letters in a formal alphabet $S\cup S^{-1}$), 
let $\mathcal{C}(S)$ be the collection of all formal commutators on the alphabet $S\cup S^{-1}.$  Starting from a weight tuple $\mathbf v=(v_1,\dots,v_k)$, define the weight system $w$ on $\mathcal{C}(S)$ inductively by 
$$  w(s_i) = v_i \text{ and }  w ([c_1,c_2] )= w(c_1) +  w (c_2)$$
Denote by $w_1< w_2 < w_3 < \ldots $ the sequence  of distinct values taken by $w(c)$ where $c$ runs over all formal commutators in $\mathcal{C}(S)$. The collection $\mathcal{C}(S)$ can be partitioned as 
$$\mathcal{C}(S) = \bigcup_{i=1}^\infty \mathcal{C}_{w_i}(S)  \text{ where } \mathcal{C}_{w_i}(S) := \{c \in \mathcal{C}(N): w(c) = w_i\} $$
For $i \ge 1$ define the subgroup (of course, here we use evaluation in $N$)
$$N_{w_i} =  \left\langle \bigcup_{j=i}^\infty \mathcal{C}_{w_j}(S)  \right \rangle =   \left\langle c \in  \mathcal{C}(S): w_N(c) \ge w_i\right \rangle \subseteq N.$$
As $N$ is nilpotent, there exists a minimal $j_*$ with the property that $N_{w_{j_* + 1}}= \{e\}$ and the  largest subgroup $N_1$ is always equal to $N$ as $S$ is a set of generators of $N$. 
These subgroups form a (central) series (see \cite{SCZ-nil})
$$ \mathcal{N}_{S,\mathbf{v}} := \left\{N =  N_{w_1} \supseteq \ldots \supseteq N_{w_{j_*}}   \supset N_{w_{j_* + 1}}  = \{e\}\right\}$$
    called \textit{the weighted series induced by $(S,\mathbf v)$}. It satisfies the relations
$$[N_{w_i},N_{w_j}]\subseteq N_{w_{i} + w_j}$$
which we refer to as the additive filtration property.  Set 
\begin{align}
\label{eq: gamma}
    \gamma =\gamma(N,S,\mathbf v)=\gamma_{S,\mathbf v}= \sum_{i=1}^{j_*} \mbox{rank}(N_{w_i}/N_{w_{i+1}}) w_i  
\end{align}
where $\mbox{rank}(N_{w_i}/N_{w_{i+1}})$ is the torsion-free rank of the abelian group $N_{w_i}/N_{w_{i+1}}$. The following known result motivates the definition of $\gamma$ and is important to the definition of radial stable like measures.

 \begin{theo}[{\cite[Theorem 3.2.1]{CKSCWZ}}, {\cite[Th. 3.2 and Rm. 3.3]{SCZ-nil}}]
\label{thm:norm_comp}  Referring to the setting and notation introduced above, for all $r \ge 1$, 
    $$\left|\{g \in N: \|g\|_{S,\mathbf v} \le r\}\right|  \asymp r^{\gamma}$$
\end{theo}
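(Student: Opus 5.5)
The plan is to compare the quasi-norm $\|\cdot\|_{S,\mathbf v}$ with a homogeneous-type quasi-norm built from coordinates adapted to the weighted series $\mathcal N_{S,\mathbf v}$, and then count lattice points in the corresponding boxes. First I reduce to the torsion-free case. The torsion subgroup of $N$ is finite and normal, so passing to $N/\mathrm{Tor}(N)$ changes ball cardinalities only by bounded factors. It also does not change the torsion-free ranks $\mathrm{rank}(N_{w_i}/N_{w_{i+1}})$. Next, for each $i$ I choose elements $c_{i,1},\dots,c_{i,r_i}$ with $r_i=\mathrm{rank}(N_{w_i}/N_{w_{i+1}})$. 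These are evaluations of formal commutators of weight $w_i$ whose images form a maximal independent set in $N_{w_i}/N_{w_{i+1}}$; they exist because $N_{w_i}$ is generated by commutators of weight $\ge w_i$. Up to finite index, every $g\in N$ then has a Mal'cev-type representation $g=\prod_{i}\prod_j c_{i,j}^{x_{i,j}}$, with the product taken in a fixed order and $x_{i,j}\in\mathbb Z$. I define $|g|_*=\max_{i,j}|x_{i,j}|^{1/w_i}$. The box $\{|g|_*\le r\}$ clearly has about $\prod_i r^{w_i r_i}=r^{\gamma}$ elements, up to the bounded index factor.

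The core of the proof is the equivalence $\|g\|_{S,\mathbf v}\asymp |g|_*$, and I would prove it in two inequalities.

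For the upper bound $\|g\|_{S,\mathbf v}\preccurlyeq |g|_*$, the key fact is that a formal commutator $c$ of weight $w(c)$ has powers satisfying $\|c^{m}\|_{S,\mathbf v}\preccurlyeq |m|^{1/w(c)}$. For generators this is immediate, since $\mathrm{deg}_{s_i}(s_i^m)=|m|$. For commutators I would use the classical identity expressing $[a,b]^{mn}$, modulo higher weight terms, as $[a^m,b^n]$ times corrections. Choosing $m\approx t^{w(a)}$ and $n\approx t^{w(b)}$, this writes $[a,b]^{t^{w(a)+w(b)}}$ as a word in which each letter $s_i$ appears $O(t^{v_i})$ times, up to an error lying in $N_{w'}$ with $w'>w(a)+w(b)$. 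The error is handled by downward induction on the weight, using the additive filtration property $[N_{w_i},N_{w_j}]\subseteq N_{w_i+w_j}$. Combining this with the quasi-triangle inequality over the finitely many factors of the representation gives the upper bound.

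For the lower bound $|g|_*\preccurlyeq\|g\|_{S,\mathbf v}$, suppose $g$ is represented by a word $\theta$ with $\mathrm{deg}_{s_i}(\theta)\le r^{v_i}$ for every $i$. I then collect $\theta$ into the normal form, in the manner of the Hall collection process. Each weight-$w$ coordinate $x_{i,j}$ becomes a polynomial in the letter counts, and this polynomial is bounded by $\sum\prod \mathrm{deg}_{s_\ell}(\theta)$ over multisets of letters of total weight $w_i$, hence by $O(r^{w_i})$. Equivalently, this is a weighted degree bound for the Mal'cev coordinate polynomials. Taking the $w_i$-th root of $|x_{i,j}|\le Cr^{w_i}$ gives $|g|_*\le C'r$.

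I expect this collection step to be the main obstacle. The difficulty is that the weights $v_i$ are arbitrary reals, so the series is not the lower central series and the coordinates are not graded by word length. I would handle it by working inside the Mal'cev completion and using a graded Lie algebra decomposition attached to the weights: each subspace spanned by commutators of weight $w_i$ is assigned degree $w_i$. This makes the dilations $\delta_t$ multiply coordinates of degree $w_i$ by $t^{w_i}$, and the bound then follows from the homogeneity of the BCH polynomials with respect to these dilations. Since the statement is quoted from \cite{CKSCWZ} and \cite{SCZ-nil}, I would follow their construction at this step.
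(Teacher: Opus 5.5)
Your architecture is the standard Guivarc'h--Bass one, and it is also the route of the works the paper cites; the paper itself gives no proof. You pass to $N/\mathrm{Tor}(N)$, count a weighted box, and prove $\|\cdot\|_{S,\mathbf v}\asymp|\cdot|_*$, using commutators $[a^m,b^n]$ for one inequality and a collection bound for the other.

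The gap is in the mechanism you propose for the step you yourself flag as the main obstacle. The weighted series induces only a \emph{filtration} $\mathfrak n_{w_1}\supseteq\mathfrak n_{w_2}\supseteq\cdots$ of the Mal'cev Lie algebra, not a grading. As a result, spans of weight-$w_i$ commutators for different $i$ can coincide, the weight dilations need not be automorphisms, and the BCH polynomials need not be homogeneous.

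A concrete example: take the Heisenberg group with $z=[x,y]$, $S=(x,y,z)$ and $\mathbf v=(1,1,3)$.
\begin{itemize}
\item The weight-$2$ commutator $[x,y]$ and the weight-$3$ letter $z$ both span $\mathbb R\log z$.
\item One has $N_2=N_3=\langle z\rangle$, $N_4=\{e\}$ and $\gamma=5$.
\item The associated graded algebra is abelian while $\mathfrak n$ is not, so no compatible grading exists.
\item In coordinates $x^ay^bz^c$, the central coordinate of a product is $c+c'\pm a'b$, which has weighted degree $2$, not $3$.
\end{itemize}
The same example shows that your collection bound must run over multisets of letters of total weight \emph{at most} $w_i$, not exactly $w_i$. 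Indeed, $[x^r,y^r]$ has $z$-coordinate $\pm r^2$ while $\deg_z=0$.

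What is true, and enough, is the filtered statement. The inclusion $[I(N_w),I(N_{w'})]\subseteq I(N_{w+w'})$ gives $[\mathfrak n_w,\mathfrak n_{w'}]\subseteq\mathfrak n_{w+w'}$. Hence, in filtration-adapted coordinates, the level-$w_i$ component of the group law is a polynomial of weighted degree $\le w_i$. Since $r\ge 1$, this still yields $|x_{i,j}|\preccurlyeq r^{w_i}$.

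Alternatively, the grading you want does exist on the free nilpotent Lie algebra on $X_1,\dots,X_k$ with $\deg X_a=v_a$. There, the weight-$w$ part of $\log\theta$ is a sum of terms $O\bigl(\prod_a\deg_{s_a}(\theta)^{e_a}\bigr)$ with $\sum_a e_av_a=w$, hence $O(r^w)$. The map $X_a\mapsto\log s_a$ sends this part into $\mathfrak n_w$. So level $w_i$ collects contributions from all $w\le w_i$, which again gives $O(r^{w_i})$.

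Separately, your phrase ``up to finite index'' needs an actual argument. The $c_{i,j}$ in general generate only finite-index subgroups of $I(N_{w_i})/I(N_{w_{i+1}})$. In the paper's $4\times4$ example, $M_{13}$ is not an ordered product of any valid choice of the $c_{i,j}$. The clean fix is to use a Mal'cev basis adapted to the isolator series for coordinates and counting. Use the $c_{i,j}$ only to bound norms of powers of that basis's elements modulo deeper terms.
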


Define $ \mathfrak{w}: N \to [0, \infty)$ by   
\begin{align*}
    \mathfrak{w}(g) =\left\{
    \begin{array}{l}
        \infty \text{ if } \langle g\rangle \text{ is finite},\\
        \underset{i=1,\ldots,j_*}{\max}\{w_i: \langle g\rangle \cap N_{w_i} \mbox{ is infinite}\} \text{ otherwise.}
    \end{array}\right.
\end{align*}

\begin{rem}
\label{rem: norm}
    The definition of $\mathfrak{w}$ is compatible with \cite[Def. 2.13 and Prop. 2.17]{SCZ-nil} when $g$ is not torsion. Namely, \cite[Definition 2.13]{SCZ-nil} introduces $j_{\mathfrak w}(g)$  and  we have (for any non-torsion element in $N$), $\mathfrak{w}(g)=w_{j_{\mathfrak{w}(g)}}$. Moreover, \cite[Proposition 2.17]{SCZ-nil} (together with other results in \cite{SCZ-nil}) states that for each fixed $g\in N$ and any $n\in \mathbb Z$, $\|g^n\|\asymp_g n^{1/\mathfrak{w}(g)}$. This explain the importance of $\mathfrak w$: for each $g\in N$ $1/\mathfrak w(g)$ is the power exponent of growth of the quasi-norm of $g^n$ as $n$ tends to infinity. 
\end{rem}

\begin{rem}
All the constructions/definitions above  (i.e. the quasi-norm, descending series and weight function $\mathfrak{w}$ and the parameter $\gamma$) depend upon the pair of tuples $(S,\mathbf v)$.  When useful, we will note this dependence using subscripts.
\end{rem}

Following the notation in \cite{SCZ-nil}, 
we define 
$$\mbox{core}(S,\mathbf{v}) = \{s_i \in S\;:\; v_i = \mathfrak{w}(s_i), i=1,\ldots, k\}$$
In \cite[Proposition 2.14]{SCZ-nil}, it's proved that for $1\le j \le j_*$, any formal commutator in $\mathcal{C}_{w_j}(S)$ whose image in $N_{w_j}$ is free in $N_{w_j}/N_{w_{j+1}}$,  must only use letters in $\mbox{core}(S,\mathbf{v})$; the collection of such formal commutators is denoted by 
$\mathcal{C}_{w_j}(\mbox{core}(S,\mathbf{v}))$ and furthermore, we denote
$$\mathcal{C}(\mbox{core}(S,\mathbf{v})) := \bigcup_{j=1}^{j_*}\mathcal{C}_{w_j}(\mbox{core}(S,\mathbf{v}))$$

\begin{exa}
\label{exa: m1}
     Take $N$ to be the group of dimension-4 upper unitriangular matrices. Let $M_{ij}$ be the matrix with all entries set to  zero except for a 1 in the $(i,j)$ position and along the diagonal.
Let 
$$S = (M_{12}, (M_{12})^2, M_{23}, M_{34}, M_{13}, M_{24}, M_{14}) \text{ and }\mathbf{v} = (1,2,2,2,3, 3, 4)$$
Then 
$$N_{S,\mathbf{v}} = \{N_{w_1} \supseteq N_{w_2} \supseteq N_{w_3} \supseteq N_{w_4} \supseteq N_{w_5} \supset  N_{w_6} \supsetneq \{e\}\}$$
where 
\begin{align*}
    w_1 = 1; & \quad N_{w_1} = N\\
    w_2 = 2; & \quad N_{w_2}=\langle(M_{12})^2, M_{23}, M_{34}, M_{13}, M_{24}, M_{14} \rangle \\  
    w_3 =  3; & \quad N_{w_3}= \langle M_{13}, M_{24}, M_{14}\rangle \\
    w_4 = 4; &  \quad N_{w_4}=\langle  (M_{13})^2, M_{24}, M_{14}\rangle \\
    w_5 = 5; &  \quad N_{w_5} =\langle M_{14} \rangle \\
    w_6 = 6; &  \quad N_{w_5} =\langle (M_{14})^2 \rangle 
\end{align*}
The growth exponent $\gamma$ defined in Theorem \ref{thm:norm_comp} can be calculated as
$$\gamma = 2* 3 + 4*2 + 6*1 =  20 $$
and 
$$\mbox{core}(S,\mathbf{v}) = \{(M_{12})^2, M_{23}, M_{34}\}.$$
Moreover, given 
$$g = \begin{pmatrix}
1 & a_{12} & a_{13} & a_{14} \\
0 & 1      & a_{23} & a_{24} \\
0 & 0      & 1      & a_{34} \\
0 & 0      & 0      & 1
\end{pmatrix}$$
it can be written as 
\begin{align*}
    g & = M_{14}^{a_{14}}M_{24}^{a_{24}} M_{34}^{a_{34}} M_{13}^{a_{13}}M_{23}^{a_{23}}M_{12}^{a_{12}} \\
    &= \left([M_{12}^{2x_{14}}, [M_{23}^{x_{14}}, M_{34}^{x_{14}}]]M_{14}^{\epsilon_{14}} \right)\left([M_{23}^{x_{24}}, M_{34}^{x_{24}}] M_{24}^{\epsilon_{24}} \right) \\
        & \quad \cdot \left(M_{34}^{a_{34}}  \right) \left([M_{12}^{2x_{13}}, M_{23}^{x_{13}}]M_{13}^{\epsilon_{13}}\right)
\left(M_{23}^{\epsilon_{23}}\right)\left(M_{12}^{x_{12}}M_{12}^{\epsilon_{12}}\right)
\end{align*}
where 
\begin{align*}
x_{14} &= \lfloor \sqrt[3]{a_{14}/2}\rfloor \text{ and }\epsilon_{14} = a_{14} - 2x_{14}^3\\
x_{24} & =\lfloor  \sqrt{a_{24}}\rfloor  \text{ and }\epsilon_{24} = a_{24} -  x_{24}^2 \\
x_{13} & = \lfloor  \sqrt{a_{13}/2}\rfloor  \text{ and }\epsilon_{13} = a_{13} -  2x_{13}^2 \\  
x_{12} & = \lfloor  a_{12}/2 \rfloor \text{ and }\epsilon_{12} = a_{12} - x_{12}
\end{align*}
So 
$$\|g\|_{S,\mathbf{v}} \asymp |a_{14}|^\frac{1}{6} + |a_{24}|^\frac{1}{4} + |a_{34}|^\frac{1}{2} + |a_{13}|^\frac{1}{4} + |a_{23}|^\frac{1}{2} + |a_{12}|^\frac{1}{2}$$
\end{exa}

We will show that the following useful result follows from \cite{SCZ-nil}.
\begin{pro} \label{frakeq}Given two pairs of tuples $(S,\mathbf v)$ and $(\tilde{S},\tilde{\mathbf v})$, we have
$$\|\cdot\|_{S,\mathbf v}\asymp \|\cdot\|_{\tilde{S},\tilde{\mathbf v}
} $$  if and only if
$$\mathfrak w_{S,\mathbf v}(s)=\mathfrak w_{\tilde{S},\tilde{\mathbf v}}(s)
\mbox{ for all } s\in \mbox{core}(S,\mathbf{v}) \cup \mbox{core}(\tilde{S}, \tilde{\mathbf{v}}).$$
\end{pro}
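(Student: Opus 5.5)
The plan is to prove both directions using two facts from \cite{SCZ-nil}, recalled in Remark \ref{rem: norm}. First, for each fixed non-torsion $g$, $\|g^n\|_{S,\mathbf v}\asymp_g |n|^{1/\mathfrak w_{S,\mathbf v}(g)}$. Second, the quasi-norm is controlled by its core: every $g\in N$ can be written as a bounded-length product of formal commutators in $\mathcal C(\mbox{core}(S,\mathbf v))$, raised to suitable powers, times a bounded number of bounded "error" terms. This is the same kind of decomposition as the one displayed in Example \ref{exa: m1}. It gives
$$\|\cdot\|_{S,\mathbf v}\asymp \|\cdot\|_{\mbox{core}(S,\mathbf v),\,\mathbf v|_{\rm core}}$$
up to the contribution of a fixed finite set, which is harmless.

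\emph{Necessity.} Suppose $\|\cdot\|_{S,\mathbf v}\asymp\|\cdot\|_{\tilde S,\tilde{\mathbf v}}$ and let $s\in \mbox{core}(S,\mathbf v)\cup\mbox{core}(\tilde S,\tilde{\mathbf v})$. Core elements are non-torsion, since $v_i=\mathfrak w(s_i)<\infty$. We may need to note that an element of $\tilde S$ lies in $N$ and has a well-defined $\mathfrak w_{S,\mathbf v}$. Applying the first fact to $g=s$ under both quasi-norms gives
$$|n|^{1/\mathfrak w_{S,\mathbf v}(s)}\asymp |n|^{1/\mathfrak w_{\tilde S,\tilde{\mathbf v}}(s)}\quad\text{as } n\to\infty,$$
which forces the two exponents to be equal.

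\emph{Sufficiency.} By symmetry it suffices to show $\|g\|_{\tilde S,\tilde{\mathbf v}}\preccurlyeq \|g\|_{S,\mathbf v}$. Let $r=\|g\|_{S,\mathbf v}$. Using the core decomposition for $(S,\mathbf v)$, write $g$ as a product of boundedly many factors $c^{x}$, where $c\in\mathcal C_{w_j}(\mbox{core}(S,\mathbf v))$ has $w(c)=w_j$ and $|x|\preccurlyeq r^{w_j}$, times bounded elements. Each core letter $s$ satisfies $\mathfrak w_{\tilde S,\tilde{\mathbf v}}(s)=v_s$ by hypothesis, so $\|s^m\|_{\tilde S,\tilde{\mathbf v}}\preccurlyeq |m|^{1/v_s}$. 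Hence every core letter raised to a power $\preccurlyeq r^{v_s}$ has $\tilde{\ }$-norm $\preccurlyeq r$. The commutator structure then transfers this bound to $c$. Concretely, one expresses $c^{x}$, up to lower-order corrections lying in deeper terms of the series, as an iterated commutator of powers $s^{m_s}$ with $|m_s|\preccurlyeq r^{v_s}$; this is exactly the mechanism used in Example \ref{exa: m1}. The word lengths add, so $\|c^x\|_{\tilde S,\tilde{\mathbf v}}\preccurlyeq r$. Finally, the quasi-triangle inequality over boundedly many factors gives $\|g\|_{\tilde S,\tilde{\mathbf v}}\preccurlyeq r$.

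\emph{Main obstacle.} The difficulty is controlling the correction terms produced when $c^x$ is rewritten as an iterated commutator of powers. These corrections lie in $N_{w_{j+1}}$ but carry coefficients of size about $r^{w_{j+1}}$ or smaller. I would handle them by downward induction on $j$, from $j_*$ to $1$. The induction hypothesis is: every element $h\in N_{w_j}$ with $\|h\|_{S,\mathbf v}\le r$ has $\|h\|_{\tilde S,\tilde{\mathbf v}}\preccurlyeq r$. To prove it, strip off the free part of $h$ in $N_{w_j}/N_{w_{j+1}}$ using commutators of core powers, which costs $\preccurlyeq r$ in both norms by the estimates of \cite{SCZ-nil}, together with a bounded torsion part. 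The remainder lies in $N_{w_{j+1}}$ and still has $S$-norm $\preccurlyeq r$, so the induction hypothesis applies to it. The one delicate input is the comparison
$$\|h\|_{S,\mathbf v}\asymp \text{(size of the coordinates of } h \text{ in a Mal'cev-type basis adapted to the series, scaled by } 1/w_j),$$
which I would take from \cite[Th. 3.2]{SCZ-nil}.
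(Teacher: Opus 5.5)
Your proposal is correct in outline. The necessity argument is exactly the paper's. The sufficiency argument starts from the same decomposition $g=\prod_c c^{x_c}$ taken from \cite{SCZ-nil}, but you bound $\|c^{x}\|_{\tilde S,\tilde{\mathbf v}}$ by a different mechanism.

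The paper first uses Lemma \ref{lem: commutators_wt} to transfer the hypothesis from core letters to core commutators, so that $\mathfrak w_{\tilde S,\tilde{\mathbf v}}(c)=\mathfrak w_{S,\mathbf v}(c)$. It then applies the growth law of Remark \ref{rem: norm} to $c$ itself, giving $\|c^{x_c}\|_{\tilde S,\tilde{\mathbf v}}\preccurlyeq |x_c|^{1/\mathfrak w_{\tilde S,\tilde{\mathbf v}}(c)}\preccurlyeq R$. The quasi-triangle inequality over the boundedly many factors finishes the proof, with no correction terms and no induction on the filtration.

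You apply the growth law only to core letters and rebuild $c^x$ by hand from powers $s^{m_s}$. That choice is what creates your ``main obstacle'' and forces the downward induction together with the Mal'cev-coordinate comparison for elements of $N_{w_j}$. Your route is longer but more self-contained: it needs neither Lemma \ref{lem: commutators_wt} (whose contradiction step is only sketched in the paper) nor the growth law for commutators.

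You could bypass most of this work. The paper's bound only uses the inequality $\mathfrak w_{\tilde S,\tilde{\mathbf v}}(c)\ge w(c)=\mathfrak w_{S,\mathbf v}(c)$, and this follows directly from your hypothesis. Each core letter $s$ satisfies $\mathfrak w_{\tilde S,\tilde{\mathbf v}}(s)=v_s$, so $s\in I(N^{\tilde S,\tilde{\mathbf v}}_{v_s})$. The additive filtration property of the isolator series (Remark \ref{rem: reduced_se}) then puts $c$ in $I(N^{\tilde S,\tilde{\mathbf v}}_{w(c)})$. Applying Remark \ref{rem: norm} to $c$ now gives $\|c^x\|_{\tilde S,\tilde{\mathbf v}}\preccurlyeq |x|^{1/w(c)}\preccurlyeq r$ directly.

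If you keep your construction, fix one detail. A single commutator $[s_1^{m_1},\dots,s_l^{m_l}]$ is congruent to $c^{m_1\cdots m_l}$ modulo deeper terms, so it cannot produce $c^x$ for every $x$ (take $x$ prime). Instead, use a bounded product of such commutators via division with remainder: write $x=m_1y+x'$ with $0\le x'<m_1$ and recurse on $y$. Alternatively, iterate on the remainder $c^{x-\prod m_s}$ as in Lemma \ref{lem: floor}, using that letter degrees add under concatenation.
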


\begin{exa}
\label{exa: m2}
    Referring to the notations in Example \ref{exa: m1}, one can verify that if $(\tilde{S}, \tilde{\mathbf{v}}) = ((M_{12}, M_{23}, M_{34}), (2,2,2))$, $(S,\mathbf{v})$ and $(\tilde{S}, \tilde{\mathbf{v}})$ satisfy the equivalence hypothesis in Theorem \ref{frakeq}.  
\end{exa}

The following useful lemma illustrates that checking equality on the cores is the natural choice. 
\begin{lem}
\label{lem: commutators_wt} We have 
$$\mathfrak w_{S,\mathbf v}(s)=\mathfrak w_{\tilde{S},\tilde{\mathbf v}}(s)
\mbox{ for all } s\in \mbox{core}(S,v) \cup \mbox{core}(\tilde{S}, \tilde{v})$$
if and only if 
$$\mathfrak w_{S,\mathbf v}(c)=\mathfrak w_{\tilde{S},\tilde{\mathbf v}}(c)
\mbox{ for all } c\in\mathcal{C}(\mbox{core}(S,v)) \cup \mathcal{C}(\mbox{core}(\tilde{S}, \tilde{v})) $$
\end{lem}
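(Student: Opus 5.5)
The reverse implication is immediate: every letter $s$ of $\mbox{core}(S,\mathbf v)$ is itself a (length-one) formal commutator in $\mathcal C(\mbox{core}(S,\mathbf v))$. So we only need to show that equality of $\mathfrak w$ on the cores propagates to all formal commutators built from core letters. We will do this by induction on the length of the commutator. Throughout, write $\mathfrak w=\mathfrak w_{S,\mathbf v}$ and $\tilde{\mathfrak w}=\mathfrak w_{\tilde S,\tilde{\mathbf v}}$.

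\textbf{Step 1: the weight of a core commutator.} Fix $c\in\mathcal C_{w_j}(\mbox{core}(S,\mathbf v))$. By definition its image is free in $N_{w_j}/N_{w_{j+1}}$, so $\langle c\rangle\cap N_{w_{j+1}}$ is trivial while $c\in N_{w_j}$. Hence $\mathfrak w(c)=w_j=w(c)$, the formal weight of $c$, and this equals the sum of the weights $v_i=\mathfrak w(s_i)$ of its letters.

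\textbf{Step 2: transfer to the tilde system.} Suppose $c=[c_1,c_2]$ is built from core letters of $S$, and by induction $\tilde{\mathfrak w}(c_\ell)=\mathfrak w(c_\ell)$. By Remark~\ref{rem: norm}, for any element $g$,
\[
\|g^n\|_{\tilde S,\tilde{\mathbf v}}\asymp_g n^{1/\tilde{\mathfrak w}(g)},
\]
so $g\in \tilde N_{\tilde{\mathfrak w}(g)}$ up to passing to a finite power. The additive filtration property then gives
\[
[c_1^{m},c_2^{m}]\in \tilde N_{\tilde{\mathfrak w}(c_1)+\tilde{\mathfrak w}(c_2)}
\]
for a suitable $m$. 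Modulo the next subgroup of the series this commutator is $c^{m^2}$, because commutation is bilinear in the associated graded group. It follows that $\tilde{\mathfrak w}(c)\ge \mathfrak w(c_1)+\mathfrak w(c_2)=\mathfrak w(c)$.

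\textbf{Step 3: the reverse inequality (main obstacle).} We must show $\tilde{\mathfrak w}(c)\le\mathfrak w(c)$, i.e.\ that no power of $c$ lies deeper in the tilde series. The plan is a volume, or rank, count. Every tilde core letter $\tilde s$ satisfies $\mathfrak w(\tilde s)=\tilde{\mathfrak w}(\tilde s)$ by hypothesis, so Step 2 applied with the roles of the two systems swapped gives $\mathfrak w\ge\tilde{\mathfrak w}$ on tilde core commutators. The free parts of the graded pieces of each series are spanned, up to finite index, by the images of the respective core commutators (\cite[Proposition 2.14]{SCZ-nil}). Filtering by weight, the domination in both directions forces
\[
\mbox{rank}(N_{w}/N_{w^+})=\mbox{rank}(\tilde N_{w}/\tilde N_{w^+})
\]
for every weight $w$. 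Concretely, compare $\sum_{w'\ge w}$ of the ranks in both series: each is at most the other. If some core commutator $c$ had $\tilde{\mathfrak w}(c)>\mathfrak w(c)$, then the $\mathbb Q$-span of $S$-core commutators of weight at least $w(c)$ would have rank strictly exceeding the corresponding rank for the tilde series, contradicting this equality. This rank bookkeeping, carried out inside the Mal'cev completion to avoid torsion issues, is where I expect the real work to lie.

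The case of commutators built from tilde core letters is symmetric, which completes the argument.
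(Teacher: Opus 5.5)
Your Steps 1 and 2 are sound and match the first half of the paper's proof. The reverse implication is trivial, and the additive filtration of the isolator series (Remark~\ref{rem: reduced_se}) gives $\tilde{\mathfrak w}(c)\ge\mathfrak w(c)$ for every $c\in\mathcal C(\mbox{core}(S,\mathbf v))$. Two small remarks on Step 2: that a power of $g$ lies in $\tilde N_{\tilde{\mathfrak w}(g)}$ is the definition of $\tilde{\mathfrak w}$, not a consequence of the norm growth in Remark~\ref{rem: norm}; and you can use $[I(\tilde N_a),I(\tilde N_b)]\subseteq I(\tilde N_{a+b})$ directly, without passing to powers.

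The gap is Step~3. You name the two right ingredients: Step~2 with the roles swapped, and the generation of the isolator quotients by core commutators (Theorem~\ref{thm: isolator}(iii)). But you never draw the conclusion from them, and the rank count you substitute is both left undone and misstated. Combining those two ingredients, every $S$-core commutator of weight $\ge t$ lies in $\{g:\tilde{\mathfrak w}(g)\ge t\}$, which is an isolated subgroup. Hence
\[
\{g:\mathfrak w(g)\ge t\}\subseteq\{g:\tilde{\mathfrak w}(g)\ge t\}\quad\text{for every } t.
\]
So the rank of the span of $S$-core commutators of weight at least $w(c)$ can never strictly exceed the corresponding tilde rank, and the contradiction you announce cannot arise as written. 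Any discrepancy would appear on the tilde side, at weights strictly above $w(c)$. More to the point, your rank equalities can only be obtained through exactly these containments, and the containments in both directions already prove the lemma. No Mal'cev completion or rank bookkeeping is needed.

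The direct finish is the paper's argument. Suppose $t:=\tilde{\mathfrak w}(c)>\mathfrak w(c)$. Applying Theorem~\ref{thm: isolator}(iii) by descending induction along the tilde isolator series, some power of $c$ satisfies
\[
c^{n}\in\bigl\langle \tilde c\in\mathcal C(\mbox{core}(\tilde S,\tilde{\mathbf v})) : \tilde{\mathfrak w}(\tilde c)\ge t\bigr\rangle\,\mbox{Tor}(N).
\]
By Step~2 with the roles swapped, each such $\tilde c$ has $\mathfrak w(\tilde c)\ge\tilde{\mathfrak w}(\tilde c)\ge t$. Equivalently, $\tilde c$ lies in $I(N_{w'})$, where $w'$ is the least $S$-weight $\ge t$ (or in $\mbox{Tor}(N)$ if there is none). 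That subgroup contains $\mbox{Tor}(N)$ and is isolated, so it contains $c^n$ and hence $c$. This gives $\mathfrak w(c)\ge t>\mathfrak w(c)$, a contradiction.
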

\begin{proof}
    Equality on $\mathcal{C}(\mbox{core}(S,\mathbf{v})) \cup \mathcal{C}(\mbox{core}(\tilde{S}, \tilde{\mathbf{v}}))$ obviously imply equality on $\mbox{core}(S,\mathbf{v}) \cup \mbox{core}(\tilde{S}, \tilde{\mathbf{v}})$.  Assume now that $$\mathfrak w_{S,\mathbf v}(s)=\mathfrak w_{\tilde{S},\tilde{\mathbf v}}(s)
\mbox{ for all } s\in \mbox{core}(S,\mathbf{v}) \cup \mbox{core}(\tilde{S}, \tilde{\mathbf{v}}).$$ Take
     $c = [s_{i_1}, \ldots, s_{i_l}] \in \mathcal{C}_{w}(\mbox{core}(S,v))$; in particular, $$\mathfrak{w}^{S,\mathbf v}(c) = w = 
     \mathfrak{w}^{S,\mathbf v}(s_{i_1})  + \ldots + \mathfrak{w}^{S,\mathbf v}(s_{i_l}) = v_{i_1} + \ldots + v_{i_l}  $$ The hypothesis gives
     $\mathfrak{w}^{\tilde{S},\tilde{\mathbf v}}(c) \ge \mathfrak{w}^{S,\mathbf v}(c) $. Assume for the sake of contradiction that $\tilde{w} := \mathfrak{w}^{\tilde{S},\tilde{\mathbf v}}(c) > \mathfrak{w}^{S,\mathbf v}(c) $. That means there exists some $n \in \mathbb{N}$ such that $c^n $ can be written as a word over 
$\mathcal{C}_{\tilde{w}}(\tilde{S}, \tilde{\mathbf{v}})$ and moreover, there exist $m \in \mathbb{N}$ such that such that all letter in the word for $c^n$ that are torsion modulo deeper terms of the series are eliminated  i.e. $c^{nm} \in \langle \mathcal{C}_{\tilde{w}}(\mbox{core}(\tilde{S}, \tilde{\mathbf{v}}) )\rangle$. However, by the hypothesis, it follows $\mathfrak{w}^{S,\mathbf v}(c) \ge \tilde{w}$, a contradiction. 
\end{proof}

In fact, either of the two (equivalent) hypotheses in Lemma \ref{lem: commutators_wt}  will force the two weight functions to agree on all of $N$. Indeed, once one of these hypotheses holds, Proposition  \ref{frakeq} implies that,
    for any $n \in \mathbb{Z}$,
    $$\|g^n\|_{S,\mathbf v} \asymp_g n^\frac{1}{\mathfrak{w}_{S,\mathbf v}(g)}\asymp_g  n^\frac{1}{\mathfrak{w}_{\tilde{S},\tilde{\mathbf v}}(g) }\asymp_g   \|g^n \|_{\tilde{S},\tilde{\mathbf v}}$$
and hence, $\mathfrak{w}_{S,\mathbf v}(g)  = \mathfrak{w}_{\tilde{S},\tilde{\mathbf v}}(g) $.  
\begin{proof} (of Theorem \ref{frakeq})
    Suppose 
$\|\cdot\|_{S,\mathbf v} \asymp  \|\cdot\|_{\tilde{S},\tilde{\mathbf v}}$ and take $s \in \mbox{core}(S,v) \cup \mbox{core}(\tilde{S}, \tilde{v})$. In particular, $s$ satisfies that $\mathfrak{w}_{S,\mathbf v}(s)$ and $\mathfrak{w}_{\tilde{S},\tilde{\mathbf v}}(s)$ are both finite. 
By Remark \ref{rem: norm}, it follows that  for any $n \in \mathbb{Z}$, 
$$\|s^n\|_{S,\mathbf v} \asymp_s n^\frac{1}{\mathfrak{w}_{S,\mathbf v}(s)}\asymp_s  n^\frac{1}{\mathfrak{w}_{\tilde{S},\tilde{\mathbf v}}(s) }\asymp_s   \|s^n \|_{\tilde{S},\tilde{\mathbf v}}$$
This forces  $\mathfrak{w}_{S,\mathbf v}(s) = \mathfrak{w}_{\tilde{S},\tilde{\mathbf v}}(s)$ as desired. 
Now suppose the weight functions coincide on the cores. 
Take $g \in N$ and suppose $\|g\|_{S,\mathbf v} = R$. 
By \cite{SCZ-nil}, we can rewrite $g$ as follows 
$$g = 
\prod_{c \in \mathcal{C}(S)}
c^{x_c}, \qquad x_c   \preccurlyeq 
\begin{cases}
    R^{\mathfrak{w}_{S,\mathbf{v}}(c)} &\text{if }c \in \mathcal{C}(\mbox{core}(S,v)) \\
    1 & \text{otherwise}
\end{cases} $$
Again by Remark \ref{rem: norm} and  Remark \ref{lem: commutators_wt}, 
\begin{align*}
\|g\|_{\tilde{S},\tilde{\mathbf v}}  \asymp 
\sum_{c \in \mathcal{C}(S)}  
|x_c|^\frac{1}{\mathfrak{w}_{\tilde{S},\tilde{\mathbf v}} (c)}    \preccurlyeq  \sum_{c \in \mathcal{C}(\mbox{core}(S,v)) } 
R^\frac{\mathfrak{w}_{S,\mathbf{v}}(c)}{\mathfrak{w}_{\tilde{S},\tilde{\mathbf v}} (c)} 
\preccurlyeq R 
\end{align*}
The rest follows from symmetry. 
 \end{proof}

\subsection{Isolators and reduced descending series}
\label{sec: reduced}

For any subgroup $K$ of $N$, we define the isolator to be 
$$I(K) : = \{g \in K\;: \; \exists n \in \mathbb{Z}, g^n \in K \}$$
As $N$ is a finitely generated nilpotent group, by \cite[Theorem 4.5]{Hall_Isolator}, $I(K)$ is a subgroup and is a finite-index augmentation of $K$.
Given the weighted series $ \mathcal{N}_{S, \mathbf v}$ associated to $(S,\mathbf{v})$, we can construct 
\begin{align}
\label{series: isolator}
    I(N_{w_1}) \supseteq 
\ldots \supseteq I(N_{w_{j_*}})   \supset I(N_{w_{j_* + 1}})  = I(\{e\}) = \mbox{Tor}(N)
\end{align}
\begin{theo}
\label{thm: isolator}
    The series consisting of isolator subgroups has the following properties. 
    \begin{enumerate}[(i)]
        \item $I(N_{w_{i}}) /I(N_{w_{i+1}})$ is torsion-free. 
        \item The series is still central. 
        \item The image of  
        $\mathcal{C}_{w_i}(\mbox{core}(S,\mathbf{v}))$ 
       generates a finite-index subgroup of 
       $I(N_{w_{i}}) /I(N_{w_{i+1}})$. 
    \end{enumerate}
\end{theo}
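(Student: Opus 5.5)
We prove the three items in order, using only that $N$ is finitely generated nilpotent, that $I(K)$ is a subgroup containing $K$ with finite index (\cite[Theorem 4.5]{Hall_Isolator}), and that $\mathcal N_{S,\mathbf v}$ is a central series with the additive filtration property. Here $I(K)$ is read as $\{g\in N:\exists n\neq 0,\ g^n\in K\}$, the intended definition. Two elementary facts about isolators will be used throughout. First, $I(K)$ is isolated: if $g^n\in I(K)$ then $g^{nm}\in K$ for some $m\neq 0$. Second, $K\subseteq K'$ implies $I(K)\subseteq I(K')$, so the isolated series is again descending.

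\emph{(i) Torsion-freeness.} Let $gI(N_{w_{i+1}})$ be a torsion element of $I(N_{w_i})/I(N_{w_{i+1}})$. Then $g^n\in I(N_{w_{i+1}})$ for some $n\ne 0$. Hence $g^{nm}\in N_{w_{i+1}}$ for some $m\ne 0$, so $g\in I(N_{w_{i+1}})$ by definition. This is immediate. Normality of $I(N_{w_{i+1}})$ in $N$ also follows easily: if $g^n\in K$ with $K$ normal, then $(hgh^{-1})^n=hg^nh^{-1}\in K$.

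\emph{(ii) Centrality.} We must show $[N,I(N_{w_i})]\subseteq I(N_{w_{i+1}})$, and more strongly $[I(N_{w_i}),I(N_{w_j})]\subseteq I(N_{w_i+w_j})$, where the right-hand side is understood as the isolator of the first series term of weight $\ge w_i+w_j$. The key tool is the standard fact about finitely generated nilpotent groups, from P.~Hall or Warfield: if $x^a\in H$ and $y^b\in K$ with $H,K$ normal, then some power of $[x,y]$ lies in $[H,K]$. Equivalently, $[I(H),I(K)]\subseteq I([H,K])$. We prove this by induction on the nilpotency class, working in $N/I([H,K])$. That quotient is torsion-free modulo the image, so it suffices to show that $\bar x^a$ commuting with $\bar y$ forces $\bar x$ to commute with $\bar y$. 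This is the classical fact that in a torsion-free nilpotent group, $[x^a,y]=e$ implies $[x,y]=e$ (uniqueness of roots: $x^a=(yxy^{-1})^a$ gives $x=yxy^{-1}$). Combining this with $[N_{w_i},N_{w_j}]\subseteq N_{w_i+w_j}$ gives $[I(N_{w_i}),I(N_{w_j})]\subseteq I(N_{w_i+w_j})$. Taking $j=1$, where $w_{i}+w_1\ge w_{i+1}$, yields centrality.

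\emph{(iii) Finite index of the core commutators.} By \cite[Proposition 2.14]{SCZ-nil}, every formal commutator in $\mathcal C_{w_i}(S)$ whose image is non-torsion in $N_{w_i}/N_{w_{i+1}}$ lies in $\mathcal C_{w_i}(\mathrm{core}(S,\mathbf v))$. The group $N_{w_i}$ is generated by $\mathcal C_{w_i}(S)$ modulo $N_{w_{i+1}}$, and $N_{w_i}/N_{w_{i+1}}$ is finitely generated abelian. The commutators with torsion image therefore contribute only a finite subgroup. Hence the images of $\mathcal C_{w_i}(\mathrm{core})$ generate a finite-index subgroup $A$ of $N_{w_i}/N_{w_{i+1}}$.

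Now consider the natural map $\phi:N_{w_i}/N_{w_{i+1}}\to I(N_{w_i})/I(N_{w_{i+1}})$. Its image has finite index, since $[I(N_{w_i}):N_{w_i}]<\infty$. So $\phi(A)$ has finite index in the target, which is a finitely generated abelian group by (ii). I expect the main obstacle to be the root/commutator lemma used in (ii). The cleanest route is probably to cite the torsion-free nilpotent unique-roots property applied in $N/I(N_{w_i+w_j})$, after checking that this quotient is torsion-free, which follows exactly as in (i).
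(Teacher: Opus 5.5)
Your proposal is correct. For (i) and (iii) it is essentially the paper's argument.

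\begin{itemize}
\item Item (i) is the same one-line isolator computation.
\item In (iii) you, like the paper, use \cite[Proposition 2.14]{SCZ-nil} to see that the core commutators generate $N_{w_i}/N_{w_{i+1}}$ up to a finite torsion part. You then push forward along $N_{w_i}/N_{w_{i+1}}\to I(N_{w_i})/I(N_{w_{i+1}})$. Its image $N_{w_i}I(N_{w_{i+1}})/I(N_{w_{i+1}})$ has index at most $[I(N_{w_i}):N_{w_i}]$.
\end{itemize}

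The real difference is in (ii). The paper argues by commutator expansion, writing $[g,x^m]=[g,x]^m\xi$ for $x^m\in N_{w_i}$ and asserting $\xi\in N_{w_{i+1}}$. But $\xi$ is built from higher commutators of $x$ and $[g,x]$, and $x$ itself may lie only in a much earlier term of $\mathcal N_{S,\mathbf v}$. For the given $m$ this membership can fail. Take the free nilpotent group of class $3$ on $a,b$ with $S=(a,a^2,b)$, $\mathbf v=(1,3,1)$, $g=b$, $x=a\in I(N_{w_3})$ and $m=2$. Then $\xi=[a,[b,a]]^{\pm1}$ (depending on the commutator convention), and this is not in $N_{w_4}$, although $[b,a]^4\in N_{w_4}$.

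Your route instead passes to the torsion-free nilpotent quotient $N/I([H,K])$ and uses uniqueness of roots to get $[I(H),I(K)]\subseteq I([H,K])$.
\begin{itemize}
\item It needs no control of error terms and is rigorous.
\item It gives for free the additive filtration property of the isolated series, which the paper establishes separately, by a similar expansion, in Remark~\ref{rem: reduced_se}.
\item Its only cost is an appeal to the classical unique-roots theorem for torsion-free nilpotent groups, rather than a self-contained computation.
\end{itemize}

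Two small clean-ups. First, the announced induction on the nilpotency class plays no role once you invoke unique roots. Second, what you actually know in the quotient is $[\bar x^a,\bar y^b]=e$, so the root argument must be applied twice: first to get $[\bar x,\bar y^b]=e$, then $[\bar x,\bar y]=e$.
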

\begin{proof}
To see (i), note that if $(gI(N_{w_{i+1}}))^m = e$, $g^m \in I(N_{w_{i+1}})$ and hence $g \in I(N_{w_{i+1}})$. 
For (ii), we verify  $[N, I(N_{w_i})] \subseteq I(N_{w_{i+1}})$. Take $g \in N$ and $x \in I(N_{w_i})$ with $x^m \in N_{w_i}$ for some $m \in \mathbb{Z}$. Note that 
$$[g, x^m] = [g,x]^m \cdot \xi, \quad \xi \in N_{w_{i+1}}$$
and because the original series is central, $[g,x^m] \in N_{w_{i+1}}$, some power of $[g,x]$ lies inside $N_{w_{i+1}}$ as desired. 
For (iii), recall that $\mathcal{C}_{w_i}(\mbox{core}(S,\mathbf{v}))$ generates the subgroup of $N_{w_i}$ that's non-torsion after modding out $N_{w_{i+1}}$. Equivalently, its image generates $$\{x I(N_{w_{i+1}}) : x\in N_{w_i}\} = (N_{w_i} \cdot I(N_{w_{i+1}}))/ I(N_{w_{i+1}}) 
$$
which is a finite-index subgroup of $I(N_{w_i})/I(N_{w_{i+1}})$. Indeed, the index is equal to $|I(N_{w_i}): (N_{w_i} \cdot I(N_{w_{i+1}}))|$, wich is bounded by $|I(N_{w_i}):N_{w_i}| < \infty$. 
\end{proof}

For any subsequence in Series \ref{series: isolator} containing identical subgroups, retain the last occurrence while removing all preceding duplicates to obtain 
\begin{align*}
 \mbox{Red}( \mathcal{N}_{S, \mathbf v}) = \Big\{ I(N_{w_{l_1}})\supset I(N_{w_{l_2}}) \supset\ldots \supset I(N_{w_{l_{m}}}) \supset   I(\{e\})  = \mbox{Tor}(N)\Big\}
\end{align*}

The series and the tuple of remaining weights
$ \mathbf{w}_{S,\mathbf v} = (w_{l_1},\ldots, w_{l_m})$
are called, respectively, the \textit{reduce series} and the \textit{reduced weight tuple} associated to $(S,\mathbf v)$.

\begin{rem}
\label{rem: reduced_wt}
    Note that weights in  $\mathbf{w}_{S,\mathbf v}$ are exactly those for which $\mathcal{C}_{w_{l_\bullet}}(\mbox{core}(S,\mathbf{v}))$ is nonempty; by Theorem \ref{thm: isolator} (iii), each successive quotient $I(N_{w_{l_\bullet}})/ I(N_{w_{l_{\bullet + 1}}}) $
     in $\mbox{Red}( \mathcal{N}_{S, \mathbf v})$ is generated by (the image of) $\mathcal{C}_{w_{l_\bullet}}(\mbox{core}(S,\mathbf v))$. Furthermore, the weight function  $ \mathfrak{w}_{S,\mathbf{v}}: N \to [0, \infty)$ can be defined equivalently as 
\begin{align*}
    \mathfrak{w}_{S,\mathbf{v}}(g) =\left\{
    \begin{array}{l}
        \infty \text{ if } \langle g\rangle \text{ is finite},\\
        \underset{i=1,\ldots,j_*}{\max}\{w_i: g\in I(N_{w_i})\}  =  \underset{w \in \mathbf{w}_{S,\mathbf v} }{\max}\{w: g\in I(N_{w})\} \text{ otherwise.}
    \end{array}\right.
\end{align*}
\end{rem}

\begin{rem}
\label{rem: reduced_se}
We record several key properties of  $\mbox{Red}(\mathcal{N}_{S,\mathbf{v}})$ and $\mathbf{w}_{S,\mathbf v}$. 
In $\mbox{Red}(\mathcal{N}_{S,\mathbf{v}})$, the largest subgroup $I(N_{w_{l_1}})$ need not to be with $N$; however, by Theorem \ref{thm: isolator}(i) and the pruning step in the construction of the reduced series, it is necessarily a finite-index subgroup of $N$. Furthermore, 
the tuple $\mathbf{w}_{S,\mathbf v}$ is closed in the sense that if $w_{l_i}, w_{l_j} \in \mathbf{w}_{S,\mathbf v}$ with 
    $w:= w_{l_i} + w_{l_j} \le w_{l_m} $, then $w_{l_i} + w_{l_j} \in \mathbf{w}_{S,\mathbf v}$.  Indeed, $w\in \{w_1,\ldots, w_{j_*}\}$ and 
    the hypothesis implies that $\mathcal{C}_{w_{l_i}}(\mbox{core}(S,\mathbf v))$ and $\mathcal{C}_{w_{l_j}}(\mbox{core}(S,\mathbf v))$ are nonempty and hence so is $\mathcal{C}_{w}(\mbox{core}(S,\mathbf v))$ and by Remark \ref{rem: reduced_wt}, $w \in \mathbf{w}_{S,\mathbf v}$. 
    The reduced series satisfies the additive filtration property as well, i.e.   
    $$[I(N_{w_{l_i}}), I(N_{w_{l_j}})] \subseteq I(N_w)$$
    Take $g_i \in I(N_{w_{l_i}})$ and $g_j \in I(N_{w_{l_j}})$. There exist $a,b \ge 1$ such that $g_i^a \in N_{w_{l_i}}$ and $g_j^b \in N_{w_{l_j}}$. 
    Then $[g_i, g_j]^{ab} = [g_i^a, g_j^b] \cdot \xi $ where $\xi$  lies in subgroups deeper in the series. Since $ [g_i^a, g_j^b] \in [N_{w_{l_i}}, N_{w_{l_j}}] \subseteq N_{w}$,     $[g_i,g_j] \in I(N_w)$ as desired.  
\end{rem}

\begin{theo}
\label{thm: wt_series} 
  Given two pairs of tuples $(S,\mathbf v)$ and $(\tilde{S},\tilde{\mathbf v})$, we have
$$\mathfrak w_{S,\mathbf v}(s)=\mathfrak w_{\tilde{S},\tilde{\mathbf v}}(s)
\mbox{ for all } s\in \mbox{core}(S,\mathbf{v}) \cup \mbox{core}(\tilde{S}, \tilde{\mathbf{v}})$$  
if and only if $\mbox{Red}(\mathcal{N}_{S, \mathbf v})$ and $\mbox{Red}(\mathcal{N}_{\tilde{S}, \tilde{\mathbf v}})$ are equal and the reduced weight tuples are equal, i.e. $\mathbf{w}_{S, \mathbf v} = \mathbf{w}_{\tilde{S}, \tilde{\mathbf v}}$. 
\end{theo}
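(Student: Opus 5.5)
The plan is to route both directions through the characterization of $\mathfrak w$ in Remark \ref{rem: reduced_wt}: for non-torsion $g$, $\mathfrak w_{S,\mathbf v}(g)=\max\{w\in \mathbf w_{S,\mathbf v}: g\in I(N_w)\}$. This means the pair (reduced series, reduced weight tuple) determines $\mathfrak w_{S,\mathbf v}$ on all of $N$. Conversely, the pair can be read off from $\mathfrak w_{S,\mathbf v}$:
$$\mathbf w_{S,\mathbf v}=\{\mathfrak w_{S,\mathbf v}(g): g\in N \text{ non-torsion}\}, \qquad I(N_w)=\{g: \mathfrak w_{S,\mathbf v}(g)\ge w\}\cup \mbox{Tor}(N) \ \text{ for } w\in \mathbf w_{S,\mathbf v}.$$
For the first identity, each value $\mathfrak w(g)$ is some $w_{l_i}$. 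For the reverse inclusion, each $w_{l_i}$ is attained by a core commutator in $\mathcal C_{w_{l_i}}(\mbox{core}(S,\mathbf v))$ whose image in the torsion-free quotient $I(N_{w_{l_i}})/I(N_{w_{l_{i+1}}})$ is nontrivial, by Theorem \ref{thm: isolator}(i),(iii). The second identity is immediate from the max formula, because the isolators are nested. Hence the reduced data is equal for the two pairs if and only if $\mathfrak w_{S,\mathbf v}=\mathfrak w_{\tilde S,\tilde{\mathbf v}}$ on all of $N$.

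\emph{Direction ``reduced data equal $\Rightarrow$ core equality''.} This is then trivial: $\mathfrak w$ agrees everywhere, in particular on the cores.

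\emph{Direction ``core equality $\Rightarrow$ reduced data equal''.} It suffices to show $\mathfrak w_{S,\mathbf v}=\mathfrak w_{\tilde S,\tilde{\mathbf v}}$ on all of $N$. The remark following Lemma \ref{lem: commutators_wt} already gives this: Proposition \ref{frakeq} yields $\|\cdot\|_{S,\mathbf v}\asymp\|\cdot\|_{\tilde S,\tilde{\mathbf v}}$, and Remark \ref{rem: norm} turns $\|g^n\|\asymp_g n^{1/\mathfrak w(g)}$ into equality of exponents. Torsion elements have $\mathfrak w=\infty$ in both cases.

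I would also sketch a purely algebraic alternative avoiding norms. Using Lemma \ref{lem: commutators_wt}, the weights agree on $\mathcal C(\mbox{core}(S,\mathbf v))\cup\mathcal C(\mbox{core}(\tilde S,\tilde{\mathbf v}))$. By Theorem \ref{thm: isolator}(iii), $I(N_w)$ is, up to isolation, generated by $I(N_{w'})$ for the next reduced weight $w'>w$ together with the core commutators of weight $w$. An induction from the deepest term upward then shows $I(N_w)\subseteq I(\tilde N_w)$ whenever $w$ is a weight for $(S,\mathbf v)$, and symmetrically, giving equality of both the weights and the groups.

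The main obstacle is the careful identification of $\mathbf w_{S,\mathbf v}$ as exactly the set of values of $\mathfrak w$. In particular one must check that every reduced weight is attained, which requires a core commutator that is non-torsion modulo the next isolator, and that no extra values arise. Getting this right is what makes the bijection between $\mathfrak w$ and the reduced data rigorous.
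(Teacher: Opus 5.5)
Your argument is correct, but your main route for the substantive direction differs from the paper's.

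The paper proves ``core equality $\Rightarrow$ equal reduced data'' purely algebraically. It starts from the deepest non-torsion term $I(N_w^{S,\mathbf v})$ of $\mbox{Red}(\mathcal N_{S,\mathbf v})$ and uses Lemma~\ref{lem: commutators_wt} to transfer the weights of the core commutators in $\mathcal C_w(\mbox{core}(S,\mathbf v))$. From this it deduces $w\in\mathbf w_{\tilde S,\tilde{\mathbf v}}$ and $I(N_w^{S,\mathbf v})\subseteq I(N_w^{\tilde S,\tilde{\mathbf v}})$, then climbs the series by induction and symmetry. This is exactly the ``algebraic alternative'' you sketch at the end. Your phrase ``up to isolation'' is in fact more accurate than the paper's displayed equality $I(N_w^{S,\mathbf v})=\langle\mathcal C_w(\mbox{core}(S,\mathbf v)),\mbox{Tor}(N)\rangle$. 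Theorem~\ref{thm: isolator}(iii) gives that equality only up to finite index, and isolatedness of the target group closes the gap.

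Your primary route works differently. You first make explicit the dictionary between $\mathfrak w_{S,\mathbf v}$ and the reduced data: the finite values of $\mathfrak w$ are the reduced weights, and its superlevel sets are the reduced isolators. The paper leaves this implicit when it calls the other direction immediate. You then get global equality of $\mathfrak w$ from Proposition~\ref{frakeq} together with $\|g^n\|\asymp_g n^{1/\mathfrak w(g)}$ (Remark~\ref{rem: norm}).

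This route is short and yields the stronger intermediate fact that the two weight functions agree on all of $N$. However, it proves a purely algebraic statement by importing the analytic input behind Remark~\ref{rem: norm} and the norm decomposition used for Proposition~\ref{frakeq}. It is also not independent of the algebraic route, since the proof of Proposition~\ref{frakeq} itself relies on Lemma~\ref{lem: commutators_wt}. The paper's argument stays entirely at the level of isolators and core commutators.
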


\begin{proof}  
The only-if direction is immediate by the definition of the weight $\mathfrak{w}$. Suppose the weight functions coincide on the cores. The last subgroups in $\mbox{Red}(\mathcal{N}_{S, \mathbf v})$ and $\mbox{Red}(\mathcal{N}_{\tilde{S}, \tilde{\mathbf v}})$ are both equal to $\mbox{Tor}(N)$ and hence equal. Now consider  the last non-torsion subgroup, denoted by $I(N_w^{S, \mathbf v})$, in $\mbox{Red}(\mathcal{N}_{S, \mathbf v})$. By  Remark \ref{rem: reduced_wt}, $I(N_w^{S, \mathbf v})/\mbox{Tor}(N)$ is generated by the image of $\mathcal{C}_{w}(\mbox{core}(S,\mathbf{v}))$. By Lemma \ref{lem: commutators_wt}, the hypothesis implies $\mathfrak w_{S,\mathbf v}(c)=\mathfrak w_{\tilde{S},\tilde{\mathbf v}}(c)$ for any $c \in \mathcal{C}_{w}(\mbox{core}(S,\mathbf{v}))$ and therefore, $w \in \mathbf{w}_{\tilde{S}, \tilde{\mathbf v}}$ and $\mathcal{C}_{w}(\mbox{core}(S,\mathbf{v})) \subseteq I(N_w^{\tilde{S},\tilde{\mathbf{v}}})$. Moreover, 
$$I(N_w^{S, \mathbf v}) = \langle \mathcal{C}_{w}(\mbox{core}(S,\mathbf{v})), \mbox{Tor}(N)\rangle \subseteq I(N_w^{\tilde{S},\tilde{\mathbf{v}}})$$
The rest follows from symmetry and induction. 
\end{proof}

\begin{exa}
We continue with the example introduced in Example \ref{exa: m1}. In the notation of that example, the corresponding isolator series is given by
    \begin{align*}
        I(N_{w_1}) &=  I(N_{w_2}) = N \\
          I(N_{w_3}) &=   I(N_{w_4})= \langle M_{13}, M_{24}, M_{14}\rangle\\
    I(N_{w_5}) &=     I(N_{w_6}) =  \langle M_{14}\rangle
    \end{align*}
    and the corresponding reduced series is 
    $$\mbox{Red}(\mathcal{N}_{S,\mathbf{v}}) = \left\{  I(N_{w_2}) \supset   I(N_{w_4}) \supset    I(N_{w_6})   \right\}$$
Moreover, recall $(\tilde{S}, \tilde{\mathbf{v}})$ defined in Example \ref{exa: m2}. As shown there, one has
$$\mathfrak w_{S,\mathbf v}(s)=\mathfrak w_{\tilde{S},\tilde{\mathbf v}}(s)
\mbox{ for all } s\in \mbox{core}(S,\mathbf{v}) \cup \mbox{core}(\tilde{S}, \tilde{\mathbf{v}})$$  
One can further verify that these two tuples also satisfy the reduced-series hypothesis of Theorem \ref{thm: wt_series}. Indeed 
$$\mathcal{N}_{\tilde{S}, \tilde{\mathbf{v}}} =  \{N_{\tilde{w}_1} \supseteq N_{\tilde{w}_2} \supseteq N_{\tilde{w}_3} \supsetneq \{e\}\} $$
where 
\begin{align*}
    \tilde{w}_1 = 2; & \quad N_{\tilde{w}_1} = N\\
    \tilde{w}_2 = 4; & \quad N_{\tilde{w}_2}=\langle M_{13}, M_{24}, M_{14} \rangle \\  
    \tilde{w}_3 =  6; & \quad N_{\tilde{w}_3}= \langle M_{14}\rangle 
\end{align*}
Trivially, we have $\mbox{Red}(\mathcal{N}_{\tilde{S}, \tilde{\mathbf{v}}}) = \mathcal{N}_{\tilde{S}, \tilde{\mathbf{v}}}  = \mbox{Red}(\mathcal{N}_{S,\mathbf{v}})$ and $\mathbf{w}_{S, \mathbf v} = \mathbf{w}_{\tilde{S}, \tilde{\mathbf v}}$. 
\end{exa}

\subsection{A collection of stable-like measures}

\label{sec: convex}
We assume we are given a symmetric probability measure $\nu$ which is stable-like in the sense that it is a convex combination
\begin{equation}\nu=\sum _i^\ell c_i\nu_i \label{nu-conv}\end{equation}
of measures $\nu_i$ of the types (\ref{radial})-(\ref{cwsl}) supported on  given subgroups $H_i$, $1\le i\le \ell$, of $N$ and such that $N=<H_1,\dots,H_\ell>$. More specifically, for each $i\in \{1,\dots,\ell\}$, one of the following two possibility occurs:
\begin{enumerate}
    \item There is subgroup $H_i$ generated by a tuple $S_i=(s_{i,1},\dots,s_{i,k_i}) $ and there is a weight tuple $\mathbf v_i=(v_{i,1},\dots,v_{i,k_i})\in (1/2,+\infty)^{k_i}$ such that $\nu_i\asymp \rho_{S_i,\mathbf v_i}\asymp (1+\|\cdot\|_{S_i\mathbf v_i})^{-1-\gamma_i}$ where 
    $\#\{\|g\|_{S_i,\mathbf v_i}\le r\}\asymp r^{\gamma_i}$.  In this case, we define the tuple $\bm{\alpha}_i=(\alpha_{i,1},\dots,\alpha_{i,k_i})$ by setting $\alpha_{i,j}=1/{i,j}$, $1\le j\le k_i$.
    \item There is a subgroup $H_i$ generated by a tuple $S_i=(s_{i,1},\dots,s_{i,k_i}) $ and there is an exponent tuple $\bm{\alpha}_i=(\alpha_{i,1},\dots,\alpha_{i,k_i})\in (0,2)^{k_i}$ such
    $\nu_i \asymp \nu_{S_i,\bm {\alpha}_i}.$ In this case, we also define a weight tuple $\mathbf v_i$ by setting $v_{i,j}=1/\alpha_{i,j}$, $1\le j\le k_i$.    \end{enumerate}

Given this data, we construct a tuple $S=S_\nu$ of group elements, a weight tuple $\mathbf v=\mathbf v_\nu$ and an exponent tuple $\bm{\alpha}_\nu$, all of the same length $k=\sum_1^\ell k_i$, by concatenation of the tuples $S_i$, $\mathbf v_i$, and $\bm{\alpha}_i$ associated to $\mu_i$ on $H_i$, $1\le i\le \ell$. 
Having done this, we consider the 
 radial measure 
$$\rho_{S,\mathbf v}\asymp(1+\|\cdot\|_{S,\mathbf v})^{-1-\gamma}$$
where the quasi-norm $\|\cdot\|_\nu : =\|.\|_{S,\mathbf v}$ and $\gamma=\gamma_{\nu}=\gamma_{S_\nu,\mathbf v_\nu}$ are defined as in \eqref{QN} and \eqref{eq: gamma} respectively.  We also introduce the auxiliary $1d$-singular probability measure 
\begin{equation}\label{muS}\mu_{S,\bm{\alpha}}(g) \asymp \sum_{j=1}^k \sum_{a\in \mathbb{Z}} \frac{\mathbf{1}_{ s_j^a}(g)}{(1+|a|)^{1+\alpha_j}}.\end{equation}
In this definition, we use the notation $S=S_\nu=(s_1,\dots,s_k)$ to denote the tuple $S$ obtained by concatenation as described above.

 Our main result can be stated as follows.
\begin{theo}
\label{thm: main1}
Referring to the above setting and notation, given a symmetric stable-like probability measure $\nu$, we have
$$\mathcal E_\nu\asymp \mathcal E_{\rho_{S,\mathbf{v}}}\asymp \mathcal E_{\mu_{S,\bm{\alpha}}},$$
that is, the Dirichlet forms of $\nu$, of the radial measure $\rho_{S,\mathbf{v}}$ and of the $1d$-singular stable-like measure $\mu_{S,\bm{\alpha}}$, are all equivalent.
Moreover, if $\tilde{\nu}$ is another symmetric stable-like probability measure in our collection, form the tuple
$(\tilde{S}, \tilde{\mathbf{v}})$ by concatenating the tuples arising from its components, as before.
The following are equivalent 
\begin{enumerate}
    \item $\|\cdot\|_{\nu}\asymp \|\cdot \|_{\tilde{\nu}}$
     \item $\mathfrak w_{S,\mathbf{v}}(s)=\mathfrak w_{\tilde{S},\tilde{\mathbf{v}}}(s)$ for all $
s \in \mbox{\em core}(S,\mathbf{v}) \cup \mbox{\em core}(\tilde{S},\tilde{\mathbf{v}})$ 
    \item $\mathfrak w_{S,\mathbf{v}}(c)=\mathfrak w_{\tilde{S},\tilde{\mathbf{v}}}(c)$ for all $
 c\in\mathcal{C}(\mbox{\em core}(S,\mathbf{v})) \cup \mathcal{C}(\mbox{\em core}(\tilde{S},\tilde{\mathbf{v}}))$ 
\item  $\mbox{\em Red}(\mathcal{N}_{S,\mathbf{v}}) = \mbox{\em Red}(\mathcal{N}_{\tilde{S},\tilde{\mathbf{v}}})$  and $\mathbf{w}_{S,\mathbf{v}} = \mathbf{w}_{\tilde{S},\tilde{\mathbf{v}}} $
\end{enumerate}
If any of the equivalent conditions above holds, then  $\mathcal E_{\tilde{\nu}}\asymp\mathcal E_\nu$. 
\end{theo}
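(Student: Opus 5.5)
The proof splits into two parts. The first is the chain $\mathcal E_\nu\asymp \mathcal E_{\rho_{S,\mathbf v}}\asymp \mathcal E_{\mu_{S,\bm\alpha}}$. The second is the equivalence of conditions 1--4 together with the final comparison. The second part is essentially bookkeeping with results already stated, so I would dispose of it first. The equivalence $1\Leftrightarrow 2$ is Proposition \ref{frakeq} applied to the concatenated pairs $(S,\mathbf v)=(S_\nu,\mathbf v_\nu)$ and $(\tilde S,\tilde{\mathbf v})$. The equivalence $2\Leftrightarrow 3$ is Lemma \ref{lem: commutators_wt}. The equivalence $2\Leftrightarrow 4$ is Theorem \ref{thm: wt_series}.

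For the last assertion, assume condition 1. Theorem \ref{thm:norm_comp} then gives $r^{\gamma}\asymp\#\{\|g\|_\nu\le r\}\asymp \#\{\|g\|_{\tilde\nu}\le r\}\asymp r^{\tilde\gamma}$, hence $\gamma=\tilde\gamma$. It follows that $\rho_{S,\mathbf v}\asymp(1+\|\cdot\|_\nu)^{-1-\gamma}\asymp \rho_{\tilde S,\tilde{\mathbf v}}$ pointwise. Pointwise comparable measures have comparable Dirichlet forms. Combining this with the first part applied to both $\nu$ and $\tilde\nu$ yields $\mathcal E_{\tilde\nu}\asymp\mathcal E_\nu$.

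For the first part, I would use $\mu=\mu_{S,\bm\alpha}$ as the hub and prove the two-sided comparison $\mathcal E_{\nu_i}$ versus $\mathcal E_\mu$, and $\mathcal E_{\rho_{S,\mathbf v}}$ versus $\mathcal E_\mu$.

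The lower bounds $\mathcal E_\mu\preccurlyeq \mathcal E_\nu$ and $\mathcal E_\mu\preccurlyeq\mathcal E_{\rho}$ come first. Since $\mathcal E_\nu\ge \min_i c_i\sum_i\mathcal E_{\nu_i}$ and $\mu$ is, up to constants, the average of the $1d$-singular measures $\mu_{S_i,\bm\alpha_i}$, it suffices to show $\mathcal E_{\mu_{S_i,\bm\alpha_i}}\preccurlyeq \mathcal E_{\nu_i}$ on each $H_i$, and then extend from $H_i$ to $N$ by summing over cosets. This is a single-jump comparison. The jump $s_j^a$ has $\mu$-weight $\asymp|a|^{-1-\alpha_j}$. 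I would write $s_j^a$ as a product of two elements $h_1h_2$ where $h_1$ ranges over the ball $\{\|h\|\le |a|^{1/\alpha_j}\}$, which has volume $\asymp |a|^{\gamma/\alpha_j\cdot\alpha_j\ldots}$, more precisely $r^{\gamma}$ with $r=|a|^{v_j}$. Averaging over the choice of $h_1$ and using $|f(gs)-f(g)|^2\le 2|f(gh_1)-f(g)|^2+2|f(gh_1h_2)-f(gh_1)|^2$ then gives a bound by $\mathcal E_{\nu_i}$. Here $\nu_i(h)\asymp r^{-1-\gamma}$ on that ball, and the counts match because $\sum_a |a|^{-1-\alpha_j}\cdot r^{-\gamma}$ against $r^{-1-\gamma}$ balances with $r=|a|^{1/\alpha_j}$. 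For the coordinate-wise type one can even argue more directly, since $\nu_{S,\bm\alpha}$ dominates the push-forward of a product-type measure whose marginals are the $1d$ laws.

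The upper bounds $\mathcal E_\rho\preccurlyeq\mathcal E_\mu$ and $\mathcal E_{\nu_i}\preccurlyeq\mathcal E_\mu$ are the main obstacle. For $\rho$, this is the comparison established in \cite{SCZ-nil} (and \cite{CKSCWZ}) between radial measures and $1d$-singular measures. The key input is that every $g$ with $\|g\|_{S,\mathbf v}\le r$ is a product of a bounded number $K$ of powers $s_j^{a}$ with $|a|\preccurlyeq r^{v_j}$. One writes $f(xg)-f(x)$ as a telescoping sum over these $K$ factors, applies Cauchy--Schwarz, and counts, for each fixed factor $s_j^a$, how many $g$ in a dyadic shell use it. This count is $\preccurlyeq r^{\gamma}/r^{v_j}$, which gives exactly the weight $|a|^{-1-\alpha_j}$. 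The bounded number of factors is the reason the comparison constants are uniform.

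The same approach handles $\nu_i$ of coordinate-wise type, because $\pi_{S_i}(\bar a)$ is already such a product. The weight $(1+\sum|a_j|^{\alpha_j})^{-1-\sum 1/\alpha_j}$, summed over the other coordinates at fixed $a_j$, gives $\asymp |a_j|^{-1-\alpha_j}$. For radial $\nu_i$ on $H_i$, I would apply the radial case within $H_i$, using the weights $\mathbf v_i$, and then note that $\mu_{S_i,\bm\alpha_i}\preccurlyeq\mu_{S,\bm\alpha}$. The delicate point I expect is the uniform counting of the representations. I would import it from \cite{SCZ-nil} rather than reprove it, checking only that concatenating tuples with weights $v_j>1/2$ keeps us in that paper's hypotheses.
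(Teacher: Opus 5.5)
Your overall architecture matches the paper's: $\mu_{S,\bm\alpha}$ serves as the hub, and conditions (1)--(4) follow from Proposition \ref{frakeq}, Lemma \ref{lem: commutators_wt} and Theorem \ref{thm: wt_series}. Your explicit check that (1) forces $\gamma=\tilde\gamma$ via Theorem \ref{thm:norm_comp} is a welcome addition.

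The gap is in the step you flag as the main obstacle, $\mathcal E_{\rho_{S,\mathbf v}}\preccurlyeq\mathcal E_{\mu_{S,\bm\alpha}}$. A bounded representation $g=s_{j_1}^{a_1}\cdots s_{j_K}^{a_K}$ with $|a_t|\preccurlyeq r^{v_{j_t}}$ only yields the pseudo-Poincar\'e inequality of Theorem \ref{thm: pp-inequality}. It does not control how many $g$ with $\|g\|\asymp r$ are routed through a given jump $s_j^a$, and your multiplicity bound $r^{\gamma-v_j}$ does not follow from it. Injectivity of $g\mapsto(a_1,\dots,a_K)$ only bounds that multiplicity by $\prod_{t'\neq t}r^{v_{j_{t'}}}$. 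This is far larger than $r^{\gamma-v_j}$, because such representations are redundant ($\sum_t v_{j_t}>\gamma$ in general). Concretely, if a positive proportion of the shell uses one jump with $|a|\asymp r^{v_j}$, that jump receives $\rho$-mass $\asymp r^{-1}$, while $\mu_{S,\bm\alpha}$ only gives it $|a|^{-1-\alpha_j}\asymp r^{-1-v_j}$. Nor can you import this step: for the comparison of $\rho_{S,\mathbf v}$ with $\mu_{S,\bm\alpha}$, the paper cites \cite{SCZ-nil} only for the opposite inequality $\mathcal E_{\mu_{S,\bm\alpha}}\preccurlyeq\mathcal E_{\rho_{S,\mathbf v}}$. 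The direction you need is exactly what Theorem \ref{thm: rho_mu} proves here.

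The paper obtains the needed counting in two stages.
First, on the shell $\|h\|\asymp 2^k$ it uses commutator coordinates $h=\prod c_{i,r}^{x_{i,r}}$ with $|x_{i,r}|\preccurlyeq 2^{kw_i}$. These range over a product box, of size $\asymp 2^{k\gamma}$ for a basis-like choice of the $c_{i,r}$, so fixing one coordinate leaves the right number of $h$. This gives $\mathcal E_\rho\preccurlyeq\sum_{i,r}\mathcal E_{\mu_{c_{i,r},1/w_i}}$.
Second, Theorem \ref{theo: commu-dirichlet} bounds $\mathcal E_{\mu_{c,1/w}}$ by $\mathcal E_{\mu_{S,\bm\alpha}}$ for $c=[s_1,[s_2,\dots,s_l]]$ with $w=\sum v_i$. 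It writes $c^z=[s_1^{\lfloor z^{v_1/w}\rfloor},\dots,s_l^{\lfloor z^{v_l/w}\rfloor}]\,c^{r_z}$ up to deeper terms. The substitution $b=\lfloor z^{v_i/w}\rfloor$ has fibres of size $\asymp b^{w/v_i-1}$; this is the explicit counting that turns $z^{-1-1/w}$ into $b^{-1-\alpha_i}$. The remainder satisfies $\|c^{r_z}\|\preccurlyeq z^{1/w-v_*/w^2}$ by Lemma \ref{lem: floor}, so it is absorbed by pseudo-Poincar\'e because $\sum_z z^{-1-v_*/w^2}<\infty$.

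Your lower bound also needs two smaller repairs.
\begin{itemize}
\item The averaging radius must be $r=|a|^{1/v_j}=|a|^{\alpha_j}$, the size of $\|s_j^a\|$, not $|a|^{v_j}$. With your choice the $a$-sum produces $\|h\|^{-\alpha_j^2-\gamma}$ instead of $\|h\|^{-1-\gamma}$.
\item For coordinate-wise $\nu_i$, ball-averaging is unavailable, since the support $\pi_{S_i}(\mathbb Z^{k_i})$ need not contain balls. The claimed domination by a product-type measure is false: on $\mathbb Z^2$ with $\bm\alpha=(1,1)$, the weight at $\bar a=(n,0)$ is $\asymp n^{-3}$ against $\asymp n^{-2}$. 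The paper instead averages with the coordinate-wise weights themselves, using Lemma \ref{lem: sum} for the marginals and the decomposition $s_1^a=s_1^{2a}\pi(\bar v)\cdot(s_1^{a}\pi(\bar v))^{-1}$, then inducts on the coordinate index.
\end{itemize}
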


\begin{proof}
    The first part of the theorem follows directly from the Dirichlet forms comparison established in the next subsection. Indeed, the comparison $\mathcal{E}_{\rho_{S,\mathbf{v}}} \asymp \mathcal{E}_{\mu_{S,\bm{\alpha}}}$ follows directly from Theorem \ref{thm: rho_mu}, and Theorem \ref{thm: rho_mu} and Theorem \ref{thm: coord} together imply that, for $i=1,\ldots, \ell$, $$\mathcal{E}_{\nu_i} \asymp \mathcal{E}_{\mu_{S_i ,\bm{\alpha}_i}}$$
    and therefore
    $$\mathcal{E}_{\nu} = \sum_{i=1}^\ell  c_i \mathcal{E}_{\nu_i} \asymp  \sum_{i=1}^\ell c_i \mathcal{E}_{\mu_{S_i ,\bm{\alpha}_i}} \asymp \mathcal{E}_{\mu_{S,\bm{\alpha}}}. $$ 
     The second part is essentially a consolidation of the results surrounding  Proposition \ref{frakeq} and Lemma \ref{lem: commutators_wt} and Theorem \ref{thm: wt_series}. Once the norm equivalence in (1) is established, it is immediate that the corresponding Dirichlet forms are comparable.   
\end{proof}

\begin{cor}
    \label{lem: return_prob} 
    Referring to the notations above, 
        $$\nu^{(2n)}(e) \asymp \rho_\nu^{(2n)}(e) \asymp \mu_\nu^{(2n)}(e) \asymp n^{-\gamma}$$
        where $\gamma$ is as in {\em (\ref{eq: gamma})}.
\end{cor}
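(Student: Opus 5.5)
The corollary follows by combining the Dirichlet form comparison in Theorem \ref{thm: main1} with the standard fact that comparable Dirichlet forms give comparable return probabilities. The second ingredient is a known return-probability estimate for the radial measure $\rho_{S,\mathbf v}$. The plan is as follows.

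\emph{Step 1: the radial measure.} By Theorem \ref{thm:norm_comp}, the quasi-norm balls of $\|\cdot\|_{S,\mathbf v}$ have volume $\asymp r^{\gamma}$. Hence $\rho_{S,\mathbf v}(g)\asymp (1+\|g\|_{S,\mathbf v})^{-1-\gamma}$ is a stable-like radial measure on a space of polynomial growth of exponent $\gamma$, and its ``stability index'' with respect to the quasi-norm is $1$. For such measures the results of \cite{CKSCWZ,CKSCWZ1,SCRZ} give two-sided heat kernel bounds $\rho^{(n)}(g)\asymp n^{-\gamma}\min\{1, n^{1+\gamma}\|g\|^{-1-\gamma}\}$, and in particular $\rho^{(2n)}(e)\asymp n^{-\gamma}$. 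I would simply invoke this known estimate rather than reprove it. If one wants a self-contained route, the upper bound comes from a Nash inequality. That inequality is derived from the pseudo-Poincaré inequality $\|f-f*\phi_r\|_2^2\preccurlyeq r\,\mathcal E_\rho(f,f)$, where $\phi_r$ is the normalized indicator of a quasi-norm ball, together with the volume growth $r^\gamma$. The lower bound is the usual argument $\rho^{(2n)}(e)\ge \rho^{(n)}(B_r)^2/|B_r|$ with $r\asymp n$, where the tail mass $\rho^{(n)}(B_r^c)$ is controlled by a first-moment or truncation estimate.

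\emph{Step 2: transfer via Dirichlet form comparison.} Theorem \ref{thm: main1} gives $\mathcal E_\nu\asymp\mathcal E_{\rho_{S,\mathbf v}}\asymp \mathcal E_{\mu_{S,\bm\alpha}}$. All three measures are symmetric probability measures on the same group $N$, and the group has polynomial volume growth. The comparison theorem for return probabilities then applies (Pittet--Saloff-Coste, stability of $\phi(n)=\mu^{(2n)}(e)$ under Dirichlet form comparison). It says that if $\mathcal E_{\mu_1}\le C\mathcal E_{\mu_2}$, then $\mu_2^{(2n)}(e)\preccurlyeq \mu_1^{(2kn)}(e)$ for some integer $k$. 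Since the target function $n^{-\gamma}$ is regularly varying, the time rescaling $n\mapsto kn$ costs only a constant. Applying this in both directions for each pair gives $\nu^{(2n)}(e)\asymp\rho^{(2n)}(e)\asymp\mu^{(2n)}(e)\asymp n^{-\gamma}$.

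\emph{Main obstacle.} The delicate point is the comparison step: Dirichlet form equivalence by itself gives comparability of return probabilities only up to the time change $n\mapsto kn$. I will handle this with the polynomial form of the decay, which absorbs the rescaling. One also needs aperiodicity, or uses even times as stated; symmetry makes $n\mapsto\mu^{(2n)}(e)$ nonincreasing, which is what the comparison argument uses. If the cited heat kernel results are stated only for radial measures built from the word length, I would instead run the Nash-inequality argument sketched in Step 1 directly with the quasi-norm balls. That argument is where Theorem \ref{thm:norm_comp} enters.
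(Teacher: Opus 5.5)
Your proposal is correct and follows the same route as the paper. The paper quotes $\rho_\nu^{(2n)}(e)\asymp n^{-\gamma}$ from \cite[Theorem 5.1]{SCZ-nil} and then transfers this to $\nu$ and $\mu_\nu$ using Theorem~\ref{thm: main1} and the comparison result of \cite{PSCstab}, with monotonicity of $n\mapsto\phi^{(2n)}(e)$ and the polynomial rate absorbing the time change; you do the same.

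One correction: you state that comparison with the time dilation on the wrong side. If $\mathcal E_{\mu_1}\le C\mathcal E_{\mu_2}$, the correct conclusion is $\mu_2^{(2kn)}(e)\le 2\mu_1^{(2n)}(e)+2e^{-2kn}$. Your version, $\mu_2^{(2n)}(e)\preccurlyeq\mu_1^{(2kn)}(e)$, is false in general; for example, take $\mu_1=\mu_2^{(2)}$ on a free group. For $n^{-\gamma}$ decay, however, both the rescaling and the additive error term are harmless, so your conclusion is unaffected.
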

\begin{proof}
By {\cite[Theorem 5.1]{SCZ-nil}}, $\rho_\nu^{(2n)}(e) \asymp  n^{-\gamma}$. To extend  the return probability estimate for $\rho_\nu^{(2n)}$ to the other two measures, we use the following result from  \cite{PSCstab}: 
    if $\phi, \psi$ are two symmetric probability measures on a countable group $G$ such that $\mathcal{E}_\phi \le C\mathcal{E}_\psi$, 
    then $$\psi^{(2kn)}(e) \le 2 \phi^{(2n)}(e) + 2e^{-2kn}, k = [C] + 2$$ 
\end{proof}

\begin{exa}
\label{exa: m12}
     Take $N$ to be the group of dimension-4 upper unitriangular matrices. Let $M_{ij}$ be the matrix with all entries set to  zero except for a 1 in the $(i,j)$ position and along the diagonal as in Example \ref{exa: m1}.

Consider any stable-like measure on $N$ as in (\ref{nu-conv}) for which each each component $\nu_i$ the associated tuple of generators, $S_i$, only involves generators chosen among the matrices $M_{ij}$. For simplicity, assume that each matrix $M_{ij}$ appears at least once in the long tuples $S_\nu$. 

Now, consider the tuple $T=(M_{12},M_{23},M_{34},M_{13},M_{24},M_{14})$ and the exponent tuple $\boldsymbol{\beta}=(\beta_{12},\beta_{23},\beta_{34},\beta_{13},\beta_{24},\beta_{14})$, where $\beta_{ij}$ is the lowest of the exponents $\alpha$ attributed to $M_{ij}$ in the long tuple $\boldsymbol{\alpha}$. Let $\bf u$
 be the weight tuple $\bf u=1/\boldsymbol{\beta}$.  We have three probability measures associated with $(T,{\boldsymbol\beta})$:
The radial measure $\rho_{T,\bf u}$ defined at (\ref{radial}), the coordinate-wise stable measure defined at (\ref{cwsl}), $\nu_{T,\boldsymbol{\beta}}$, and the $1$-singular measure $\mu_{T,\boldsymbol{\beta}}$ defined at (\ref{muS}). Theorem \ref{thm: main1} tells us that $\nu$ and these three measures have equivalent Dirichlet forms. It also allows us to find the minimal  sub-tuples of $T$ and $\boldsymbol{\beta}$
that provide measures with equivalent Dirichlet forms.

These minimal sub-tuples are determined by inspecting
$\bf u=1/\boldsymbol{\beta}$ as follows:
\begin{enumerate}
\item Assume that $u_{13}\le u_{12}+u_{23}$ and $u_{24}\le u_{23}+u_{34}$. If $u_{14}\le u_{12}+u_{23}+u_{34}$, set $T^*=(M_{12},M_{23},M_{34})$
and ${\bf u}^*=(u_{12},u_{23},u_{34})$. If $u_{14}> u_{12}+u_{23}+u_{34}$, set $T^*=(M_{12},M_{23},M_{34},M_{14})$
and ${\bf u}^*=(u_{12},u_{23},u_{34},u_{14})$.
\item Assume that $u_{13}> u_{12}+u_{23}$ and $u_{24}\le u_{23}+u_{34}$. If $u_{14}\le u_{13}+u_{34}$, set $T^*=(M_{12},M_{23},M_{34},M_{13})$
and ${\bf u}^*=(u_{12},u_{23},u_{34},u_{13})$. If $u_{14}> u_{13}+u_{34}$, set $T^*=(M_{12},M_{23},M_{34},M_{13}, M_{14})$
and ${\bf u}^*=(u_{12},u_{23},u_{34},u_{13},u_{14})$.
\item Assume that $u_{13}\le u_{12}+u_{23}$ and $u_{24}> u_{23}+u_{34}$. If $u_{14}\le u_{12}+u_{24}$, set $T^*=(M_{12},M_{23},M_{34},M_{24})$
and ${\bf u}^*=(u_{12},u_{23},u_{34},u_{24})$. If $u_{14}> u_{12}+u_{24}$, set $T^*=(M_{12},M_{23},M_{34},M_{24}, M_{14})$
and ${\bf u}^*=(u_{12},u_{23},u_{34},u_{24},u_{14})$.
\item Assume that $u_{13}> u_{12}+u_{23}$ and $u_{24}> u_{23}+u_{34}$. 

If $u_{14}\le \max\{u_{12}+u_{24},u_{13}+u_{34}\}$, set $T^*=(M_{12},M_{23},M_{34},M_{13},M_{24})$
and ${\bf u}^*=(u_{12},u_{23},u_{34},u_{13},u_{24})$.

If $u_{14}> \max\{u_{12}+u_{24},u_{13}+u_{34}\}$,
set $T^*=T$, ${\bf u}^*={\bf u}$.\end{enumerate}
The tuple $T^*,{\bf u}^* $ and $\boldsymbol{\beta}^*=1/{\bf u}^*$ give three measures, the radial measure $\rho_{T^*,{\bf u}^*}$, the coordinate-wise stable measure $\nu_{T^*,{\boldsymbol \beta}^*}$, and the $1$d-singular measure $\mu_{T^*,{\boldsymbol \beta}^*}$. These three measures have equivalent Dirichlet forms which are also equivalent to the Dirichlet form of our original measure $\nu$. The probability of return of $\nu$, $\nu^{(2n)}(e)\asymp n^{-\gamma}$, has $\gamma$ equal to the sum of the entries of the tuple ${\bf u^*}$ given in each of the cases above.  

As a concrete example, 
consider the following convex combination of coordinate-wise and 1d-singular probability measures
$$\nu = \frac{1}{2}\nu_{S_1,\bm{\alpha}_1} + \frac{1}{2}\rho_{S_2,\bm{\alpha}_2} $$
where 
\begin{align*}
    S_1 &= (M_{12}, M_{23}, M_{13}, M_{14}),\quad \bm{\alpha}_1 = (1/3, 1/3, 1/4, 1/5)\\
     S_2 &= (M_{12}, M_{34}, M_{24}),\quad \bm{\alpha}_2  = (1/3, 1/3, 1/5)
\end{align*}
Following the notations above, we extract
$$T = (M_{12}, M_{23}, M_{34}, M_{13}, M_{24}, M_{14}) \text { and }\bm{\beta} = (1/3, 1/3,1/3, 1/4, 1/5, 1/5) $$
and it follows that
$$T^* = (M_{12}, M_{23}, M_{34}) \text{ and }\bm{\beta}^* = (1/3, 1/3,1/3). $$
Theorem \ref{thm: main1} states that the Dirichlet form $\mathcal E_\nu$ is equivalent to the Dirichlet form $\mathcal E_{\nu_{T^*,\bm{\beta}^*}}$.
\end{exa}

\subsection{Comparison of Dirichlet forms: Proof of Theorem \ref{thm: main1}}
\subsubsection{Radial and $1d$ singular measures}
In this section we consider a $1d$ singular measure $\mu_{S,\bm{\alpha}}$, $S=(s_1,\dots, s_k)$, $\bm{\alpha}=(\alpha_1,\dots,\alpha_k)$, and
the associated radial measure $\rho_{S,\mathbf v}$ where $\mathbf v=(v_1,\dots,v_k)$, with $v_i=1/\alpha_i>1/2$.

\begin{theo} 
\label{thm: rho_mu}
The following two Dirichlet forms are comparable
 $$\mathcal{E}_{\rho_{S,\mathbf v}} \asymp \mathcal{E}_{\mu_{S, \bm{\alpha}}}.$$
\end{theo}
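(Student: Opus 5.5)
We prove the two inequalities $\mathcal{E}_{\mu_{S,\bm\alpha}}\preccurlyeq \mathcal{E}_{\rho_{S,\mathbf v}}$ and $\mathcal{E}_{\rho_{S,\mathbf v}}\preccurlyeq \mathcal{E}_{\mu_{S,\bm\alpha}}$ separately. Both rest on the standard path comparison for Dirichlet forms on groups. If every $h$ in the support of one measure is written as a word $h=x_1\cdots x_m$ in elements of the other support, then Cauchy--Schwarz together with translation invariance gives
$$\sum_g|f(gh)-f(g)|^2\le m\sum_{i=1}^m\sum_g|f(gx_i)-f(g)|^2 .$$
It then remains to sum these bounds against the weights and check that the total load on each elementary step is controlled.

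\emph{Easy direction, $\mathcal{E}_{\mu}\preccurlyeq\mathcal{E}_{\rho}$.} This needs no paths. It suffices to show $\mu_{S,\bm\alpha}(g)\preccurlyeq\rho_{S,\mathbf v}(g)$ pointwise, because $\mathcal{E}_\phi\le C\mathcal{E}_\psi$ whenever $\phi\le C\psi$. For $g=s_j^a$ we have $\|s_j^a\|_{S,\mathbf v}\le |a|^{1/v_j}=|a|^{\alpha_j}$ by the definition (\ref{QN}). Hence
$$\rho(s_j^a)\succcurlyeq (1+|a|)^{-\alpha_j(1+\gamma)},$$
which falls short of $(1+|a|)^{-1-\alpha_j}$ whenever $\alpha_j\gamma>1$. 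So pointwise domination fails, and we need a genuine comparison. I would use the identity $\mathcal{E}_\phi(f,f)=\frac12\sum_h\phi(h)\,\|f(\cdot h)-f\|_2^2$ and average over a ball in place of $s_j^a$. Write $s_j^a=x\cdot(x^{-1}s_j^a)$ with $x$ ranging over the quasi-norm ball $B(|a|^{\alpha_j})$, which has volume $\asymp|a|^{\alpha_j\gamma}$ by Theorem \ref{thm:norm_comp}. Both factors have quasi-norm $\preccurlyeq|a|^{\alpha_j}$, so
$$\|f(\cdot s_j^a)-f\|^2\le 2|B|^{-1}\sum_{x\in B}\bigl(\|f(\cdot x)-f\|^2+\|f(\cdot x^{-1}s_j^a)-f\|^2\bigr).$$
Summing with weight $|a|^{-1-\alpha_j}$, each $y$ with $\|y\|\asymp r$ receives total weight
$$\sum_{|a|^{\alpha_j}\succcurlyeq r}|a|^{-1-\alpha_j}|a|^{-\alpha_j\gamma}\asymp r^{-1-\gamma},$$
which matches $\rho(y)$. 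This direction is therefore routine.

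\emph{Hard direction, $\mathcal{E}_{\rho}\preccurlyeq\mathcal{E}_{\mu}$.} Here we use the structure results of \cite{SCZ-nil}, in the form already invoked in the proof of Proposition \ref{frakeq}. Any $g$ with $\|g\|_{S,\mathbf v}\le R$ is a word of bounded length $m$ (independent of $R$) in the letters $s_j^{a}$, with $|a|\preccurlyeq R^{v_j}$. This follows by expanding each $c^{x_c}$ through commutator identities into boundedly many powers of generators, which is exactly the ``controlled word'' statement of \cite{SCZ-nil}. Fix such a representation $g=s_{j_1}^{a_1}\cdots s_{j_m}^{a_m}$ for each $g$. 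Applying the Cauchy--Schwarz bound,
$$\mathcal{E}_\rho(f,f)\preccurlyeq\sum_{j}\sum_a \|f(\cdot s_j^a)-f\|^2\sum_{g:\,s_j^a\text{ used}}\rho(g).$$
The key estimate is then the load bound
$$\sum_{g:\ s_j^a\text{ appears in the word of }g}\rho(g)\preccurlyeq(1+|a|)^{-1-\alpha_j}.$$

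The load bound is the main obstacle. The obvious count, using $\|g\|\ge$ roughly $|a|^{\alpha_j}$ and then all $g$ in dyadic shells, gives $\sum_{\|g\|\ge r}\rho(g)\asymp r^{-1}$. That is $|a|^{-\alpha_j}$, not $|a|^{-1-\alpha_j}$. We must exploit that a fixed letter $s_j^a$ is used by few $g$. Conditioning on that letter in position $i$ loses one "coordinate" of freedom, of size $\asymp R^{v_j}=|a|$, so the number of such $g$ in the shell $\|g\|\asymp R$ is $\preccurlyeq R^{\gamma}/|a|$. The words in \cite{SCZ-nil} come from coordinates: the exponents $a_i$ are determined, up to bounded ambiguity, by coordinate functions of $g$, with $a_i$ filling a range of size $R^{v_{j_i}}$. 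If this counting fails, the fallback is to randomize the paths instead, averaging over intermediate points as in the easy direction, so that loads spread uniformly. Summing over shells $R\succcurlyeq|a|^{\alpha_j}$ then gives
$$\sum_R R^{-1-\gamma}R^{\gamma}|a|^{-1}\asymp|a|^{-1-\alpha_j},$$
as required.
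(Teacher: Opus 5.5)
Your proof of $\mathcal{E}_{\mu_{S,\bm\alpha}}\preccurlyeq\mathcal{E}_{\rho_{S,\mathbf v}}$ is correct. Averaging over $B(|a|^{\alpha_j})$ and swapping the sums gives every $y\neq e$ total weight $\preccurlyeq\sum_j\sum_{|a|\succcurlyeq\|y\|^{v_j}}|a|^{-1-\alpha_j(1+\gamma)}\asymp\|y\|^{-1-\gamma}$. The paper only cites \cite{SCZ-nil} for this direction, so your argument is a self-contained substitute.

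The hard direction has a genuine gap, exactly at the step you flag: the load bound is asserted, not proved, and it does not follow from the controlled-word statement you invoke. That statement only bounds the sizes $|a_i|\preccurlyeq R^{v_{j_i}}$. Your argument needs a distributional fact: for each shell $\|g\|\asymp R$ and each $(j,a)$, at most $\preccurlyeq R^{\gamma}/(1+|a|)$ elements $g$ may use the letter $s_j^a$ in their chosen word. If some exponent slot were essentially a function of the shell, the load on $s_j^a$ would be $\asymp|a|^{-\alpha_j}$, which is exactly the bad count you computed.

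Nor are the exponents obviously ``coordinates''. A coordinate $c^{z}$ with $|z|\asymp R^{w}$ equals $[s_{1}^{\lfloor z^{v_1/w}\rfloor},\dots,s_{l}^{\lfloor z^{v_l/w}\rfloor}]$ only up to a remainder $c^{r_z}$, $r_z\preccurlyeq z^{1-v_*/w}$, plus higher commutators. Iterating this does not terminate after boundedly many steps. An exact bounded expansion therefore needs a more careful, construction-specific choice of exponents, and the fiber bound would have to be verified for that construction.

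The ``randomize the paths'' fallback is not a proof either. Since $\mu_{S,\bm\alpha}$ lives on $\bigcup_j\langle s_j\rangle$, the intermediate points must themselves be reached by generator powers, so you cannot simply average over a ball as in the easy direction.

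The paper avoids any global load bound by splitting the argument in two.

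First, it writes $h$ in commutator coordinates, $h=\prod_{i,r}c_{i,r}^{x_{i,r}}$ with $|x_{i,r}|\preccurlyeq R^{w_i}$ for non-torsion $c_{i,r}$. These tuples range over a box of size $\asymp R^{\gamma}$, whereas exponent tuples of bounded words in generator powers fill a box that is in general much larger than $R^{\gamma}$. So one can sum over the whole box: fixing one coordinate leaves $\preccurlyeq R^{\gamma-w_i}$ tuples, which yields $\mathcal{E}_{\rho_{S,\mathbf v}}\preccurlyeq\sum_{i,r}\mathcal{E}_{\mu_{c_{i,r},1/w_i}}$, with the torsion terms trivial.

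Second, Theorem \ref{theo: commu-dirichlet} bounds each one-parameter form by $\mathcal{E}_{\mu_{S,\bm\alpha}}$. The leading commutator of powers is handled by the explicit one-variable change of variables $z\mapsto\lfloor z^{v_i/v}\rfloor$, whose fibers have size $\asymp|x|^{v/v_i-1}$. The remainder $s^{r_z}$ is never expanded into letters; it is absorbed by the pseudo-Poincar\'e inequality (Theorem \ref{thm: pp-inequality}). The loss there is summable because $\|s^{r_z}\|_{S,\mathbf v}\preccurlyeq(1+|z|)^{1/v-v_*/v^2}$.

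To close your argument you need either this two-step structure or an actual proof of the fiber bound for a specific choice of bounded words.
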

The proof of this result proceeds by establishing two auxiliary theorems that are themselves of independent interest. The first of these two theorem is taken  from \cite{SCZ-nil}.
\begin{theo}[\cite{SCZ-nil,CKSCWZ1}] \label{thm: pp-inequality} For any $n \in N$,  the following pseudo-Poincar\'e inequality hold 
$$  \sum_{m\in N}|f(mn)-f(m)|^2\le C \|n\|_{S,\mathbf{v}} \mathcal E_{\mu_{S, \bm{\alpha}}}(f,f).$$
\end{theo}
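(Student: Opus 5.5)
The plan is to reduce the pseudo-Poincar\'e inequality to a product decomposition of $n$ into powers of the generators $s_i$, and then to handle each power separately. This rests on two ingredients: a one-dimensional estimate along each cyclic subgroup $\langle s_i\rangle$, and a ``word'' decomposition of $n$ with controlled exponents. Note that, since we are aiming for the form $\mathcal E_{\mu}$ on the right, the natural scaling is $\|n\|^{1}$ (in the quasi-norm, which already carries the exponent structure, $\|s_i^a\|\asymp |a|^{\alpha_i}$).

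\textbf{Step 1 (one-parameter estimate).} For a single generator $s=s_i$ with exponent $\alpha=\alpha_i\in(0,2)$, we show
$$\sum_{m\in N}|f(ms^a)-f(m)|^2\le C|a|^{\alpha}\,\mathcal E_{\mu^{(i)}}(f,f),$$
where $\mu^{(i)}(s^b)\asymp (1+|b|)^{-1-\alpha}$ is the $i$-th component of $\mu_{S,\bm\alpha}$. Split $N$ into right cosets of $\langle s\rangle$, so that each coset is a copy of $\mathbb Z$. The claim then becomes the classical one-dimensional pseudo-Poincar\'e inequality for the symmetric $\alpha$-stable-like walk on $\mathbb Z$. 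To prove it, average over intermediate points: for $|b|\le |a|$,
$$|f(ms^a)-f(m)|^2\le 2|f(ms^a)-f(ms^b)|^2+2|f(ms^b)-f(m)|^2.$$
Average over $b$ in a window of size $\asymp|a|$ and sum over $m$. Each resulting jump has size $\le 2|a|$, so it carries $\mu$-weight at least $c|a|^{-1-\alpha}$. Multiplying by $|a|$ (the window size) gives the factor $|a|^{\alpha}$. Summing over $i$ bounds each term by $C|a|^{\alpha_i}\mathcal E_{\mu_{S,\bm\alpha}}(f,f)$.

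\textbf{Step 2 (decomposition).} Given $n$ with $\|n\|_{S,\mathbf v}=R$, we use the result of \cite{SCZ-nil} used in the proof of Proposition \ref{frakeq}. It writes $n$ as a product of a bounded number $L$ (independent of $n$) of factors $s_{i_j}^{a_j}$ with $|a_j|\preccurlyeq R^{v_{i_j}}$. This expansion is obtained by expanding the commutators $c^{x_c}$, with $|x_c|\preccurlyeq R^{\mathfrak w(c)}$, into words in the generators of controlled degree. This is exactly where the additive weight structure $w([c_1,c_2])=w(c_1)+w(c_2)$ enters. Given the decomposition, the triangle inequality and right-invariance (substituting $m\mapsto m s_{i_1}^{a_1}\cdots s_{i_{j-1}}^{a_{j-1}}$) give
$$\sum_m|f(mn)-f(m)|^2\le L\sum_{j=1}^L\sum_m|f(ms_{i_j}^{a_j})-f(m)|^2\le CL^2\max_j |a_j|^{\alpha_{i_j}}\,\mathcal E_{\mu}(f,f).$$
Since $|a_j|^{\alpha_{i_j}}\preccurlyeq R^{v_{i_j}\alpha_{i_j}}=R$, the right-hand side is at most $C'\|n\|_{S,\mathbf v}\,\mathcal E_{\mu_{S,\bm\alpha}}(f,f)$, which is the claim.

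\textbf{Main obstacle.} The hard step is Step 2: producing, for every $n$, a word of uniformly bounded length in powers of the generators whose exponents are bounded by $R^{v_i}$. The definition of $\|\cdot\|_{S,\mathbf v}$ only bounds the letter counts $\deg_{s_i}(\theta)\le R^{v_i}$; it does not bound the number of syllables, which can be of order $R^{\max v}$. Grouping consecutive equal letters therefore does not suffice. I would first try to invoke the normal-form theorem of \cite{SCZ-nil}, namely the Mal'cev-type coordinates along the weighted series in which each commutator power $c^{x}$ is realized as a bounded-length word with $|x|^{1/w(c)}$-controlled generator powers, as in the explicit computation of Example \ref{exa: m1}. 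For the torsion and non-core parts, which have bounded exponents, the contribution is $O(1)\le C\|n\|$ for $n\ne e$.
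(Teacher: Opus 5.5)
The paper gives no proof of this statement; it imports it from \cite{SCZ-nil,CKSCWZ1}. Your argument is the one used in those works: you combine the one-dimensional estimate $\sum_m|f(ms_i^a)-f(m)|^2\preccurlyeq|a|^{\alpha_i}\mathcal E_{\mu_{S,\bm\alpha}}(f,f)$ with the bounded-length decomposition $n=\prod_{j\le L}s_{i_j}^{a_j}$, $|a_j|\preccurlyeq\|n\|_{S,\mathbf v}^{v_{i_j}}$. As you correctly identify, that decomposition is the real content and has to be taken from those works; the commutator normal form quoted in the proof of Proposition \ref{frakeq} does not give it until each $c^{x_c}$ is expanded into boundedly many generator powers. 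One small correction that does not affect the argument: $\|s_i^a\|_{S,\mathbf v}\asymp|a|^{\alpha_i}$ holds in general only with $\preccurlyeq$, and fails for non-core $s_i$, but your Step 2 only uses $|a_j|^{\alpha_{i_j}}\preccurlyeq\|n\|_{S,\mathbf v}$.
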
 
\begin{theo}
\label{theo: commu-dirichlet}
Set $s = [s_{1}, [s_{2} \ldots, [s_{k-1}, s_k]]]$ and  $ v =  \sum_{i=1}^k v_i
$. 
Then $$\mathcal{E}_{\mu_{s,\frac{1}{v}}} \preccurlyeq  \mathcal{E}_{\mu_{S,\bm{\alpha}}} $$
where 
$\mu_{s, \frac{1}{v}}$ is the probability measure with
$$\mu_{s,\frac{1}{v}}(g) \asymp \sum_{a\in \mathbb{Z}} \frac{\bm{1}_{s^a}(g)}{(1+|a|)^{1+\frac{1}{v}}}, \quad g\in N $$
\end{theo}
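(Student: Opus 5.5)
The plan is to reduce the statement to the pseudo-Poincar\'e inequality of Theorem \ref{thm: pp-inequality} applied to the elements $s^a$, $a\in\mathbb Z$. By definition,
$$\mathcal{E}_{\mu_{s,\frac1v}}(f,f)\asymp \frac12\sum_{a\in\mathbb Z}\frac{1}{(1+|a|)^{1+1/v}}\sum_{m\in N}|f(ms^a)-f(m)|^2 .$$
The inner sum is exactly the quantity controlled by Theorem \ref{thm: pp-inequality} with $n=s^a$, giving $\sum_m|f(ms^a)-f(m)|^2\le C\|s^a\|_{S,\mathbf v}\,\mathcal E_{\mu_{S,\bm\alpha}}(f,f)$. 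It then suffices to show that $\sum_a (1+|a|)^{-1-1/v}\|s^a\|_{S,\mathbf v}<\infty$. This holds as soon as $\|s^a\|_{S,\mathbf v}\preccurlyeq (1+|a|)^{\beta}$ for some $\beta<1/v$.

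The key step is therefore the bound $\|s^a\|_{S,\mathbf v}\preccurlyeq |a|^{1/v}$, and then an improvement to a strict inequality in the exponent. Indeed, with exponent exactly $1/v$ the series $\sum_a |a|^{-1-1/v}|a|^{1/v}$ diverges logarithmically, so the naive estimate fails.

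For the growth of $\|s^a\|$, note that $s=[s_1,[s_2,\dots,[s_{k-1},s_k]]]$ is a formal commutator of weight $w(s)=v=\sum v_i$. Hence $s\in N_{v}$, and $\mathfrak w_{S,\mathbf v}(s)\ge v$ (or $\mathfrak{w}(s)=\infty$ if $s$ is torsion). By Remark \ref{rem: norm}, $\|s^a\|\asymp_s |a|^{1/\mathfrak w(s)}\preccurlyeq |a|^{1/v}$. One could also see this directly: $s^{x_1\cdots x_k}$ is, up to a bounded error word, the commutator $[s_1^{x_1},[\dots,s_k^{x_k}]]$ with $x_i\approx |a|^{v_i/v}$. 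This word uses $\deg_{s_i}\preccurlyeq x_i$, so its quasi-norm is at most $\max_i x_i^{1/v_i}\approx |a|^{1/v}$.

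The main obstacle is the borderline exponent. The estimate as it stands gives $\sum_a |a|^{-1}=\infty$. I would handle this by replacing the crude application of the pseudo-Poincar\'e inequality with a dyadic/telescoping argument that uses the full structure of the proof of Theorem \ref{thm: pp-inequality}, rather than its conclusion.

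Concretely, I would write $s^a$ as a product of $O(1)$ pieces. Each piece is of the form $s_i^{\pm x}$ with $|x|\preccurlyeq |a|^{v_i/v}$, times a bounded number of elements of deeper weight. Then, by the triangle inequality in $\ell^2(N)$ and right invariance, $\sum_m|f(ms^a)-f(m)|^2$ is bounded by a finite sum of terms $\sum_m|f(ms_i^{x})-f(m)|^2$ with $|x|\approx |a|^{v_i/v}$.

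Summing over $a$ with weight $|a|^{-1-1/v}$ and changing variables $x=|a|^{v_i/v}$ (so $|a|^{-1-1/v}\,da\approx x^{-1-\alpha_i}\,dx$, as $v/v_i\cdot(-1/v)=-\alpha_i$) converts each term into the $i$-th component $\sum_x (1+|x|)^{-1-\alpha_i}\sum_m|f(ms_i^x)-f(m)|^2$ of $\mathcal E_{\mu_{S,\bm\alpha}}$. The deeper weight elements contribute terms controlled by induction on nilpotency class, or directly by Theorem \ref{thm: pp-inequality}, since their norms grow with a strictly smaller exponent than $1/v$.

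The delicate point, which I expect to require the most care, is making the commutator decomposition of $s^a$ uniform in $a$. One needs the number of factors to be bounded and the change of variables to be compatible with the discrete sums, that is, each $x$ should be hit by boundedly many $a$ per dyadic scale. This is the standard Varopoulos/Saloff-Coste commutator argument, carried out by induction on $k$ through the nested commutator.
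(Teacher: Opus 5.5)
Your overall plan matches the paper's. You write $s^a$ (say $a>0$) as $[s_1^{x_1},\ldots,s_k^{x_k}]$ with $x_i=\lfloor a^{v_i/v}\rfloor$, times correction terms. You split with Lemma \ref{lem: decomp} and turn the main pieces into the components $\mathcal E_{\mu_{s_i,\alpha_i}}$ by the change of variables $x\approx a^{v_i/v}$. (Each $x$ has about $x^{v/v_i-1}$ preimages $a$, not boundedly many, but your Jacobian computation already accounts for this.)

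The gap is in how you dispose of the correction terms, which come in two kinds.

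The first is the rounding remainder $s^{r_a}$ with $r_a=a-\prod_i x_i$. It is not of deeper weight; it is a power of $s$ itself. It is the only term that is genuinely sub-critical: Lemma \ref{lem: floor} gives $r_a\preccurlyeq a^{1-v_*/v}$, hence $\|s^{r_a}\|\preccurlyeq a^{1/v-v_*/v^2}$, and Theorem \ref{thm: pp-inequality} applies.

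The second appears when $k$ is smaller than the nilpotency class $c$. Then $[s_1^{x_1},\ldots,s_k^{x_k}]$ differs from $s^{x_1\cdots x_k}$ by a factor $\xi$ that is not a bounded word. It is a product of boundedly many powers $c'^{\,y}$ of longer commutators $c'$, and by homogeneity $y\approx a^{w(c')/v}$. For example, in class $3$, $[s_1^{x},s_2^{y}]$ contains powers of $[[s_1,s_2],s_1]$ and $[[s_1,s_2],s_2]$ with exponents of order $x^2y$ and $xy^2$. Hence $\|c'^{\,y}\|\preccurlyeq y^{1/w(c')}\approx a^{1/v}$, and in general (e.g. free nilpotent groups) nothing better holds. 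This is exactly the borderline exponent. So your claim that these factors have norms growing with a strictly smaller exponent than $1/v$ is false. The ``directly by Theorem \ref{thm: pp-inequality}'' route diverges logarithmically, just like the naive estimate you rejected.

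What closes the argument is the statement itself, applied to the longer commutators. After the change of variables $y\approx a^{w(c')/v}$, the weights $a^{-1-1/v}$ become $y^{-1-1/w(c')}$. So the $c'^{\,y}$-terms sum to $\mathcal E_{\mu_{c',1/w(c')}}$, which is $\preccurlyeq\mathcal E_{\mu_{S,\bm{\alpha}}}$ provided the theorem is already known for commutators of length greater than $k$.

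This is why the paper runs a \emph{descending} induction on $k$. The base case is $k=c$: the length-$c$ commutator is central and multilinear, so $[s_1^{x_1},\ldots,s_k^{x_k}]=s^{x_1\cdots x_k}$ exactly and only the rounding remainder survives. Your proposed ascending induction ``on $k$ through the nested commutator'', or on the nilpotency class of the group, does not close. The step for $s=[s_1,s']$ produces error commutators longer than $s$, and no inductive hypothesis is available for them.
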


\begin{proof}
Suppose $N$ is of nilpotency class $c$. We'll prove the claim by backward induction on the length $k$. Assume $k = c$. For any $z \in \mathbb{Z}$ (we treat the case $z>0$ for definiteness) 
    $$s^z = \left[s_{1}^{\lfloor z^{v_1/v }\rfloor }, \ldots, s_{k}^{\lfloor z^{ v_k/v}\rfloor}\right] s^{
    z - \prod_{i=1}^k \lfloor z^{ v_i /v}\rfloor}$$
For any $z\in \mathbb{Z}_{\ge 0}$, denote by $r_z =  z - \prod_{i=1}^k \lfloor z^{ v_i /v}\rfloor$. By Lemma \ref{lem: decomp}, 
\begin{align}
    \mathcal{E}_{\mu_{s,\frac{1}{v}}} (f,f) & =  \sum_{z\in \mathbb{Z}} \sum_{g \in N} \frac{|f(g s^z) - f(g)|^2}{(1 + |z|)^{1+ 1/v}} \nonumber\\
     & \preccurlyeq \sum_{i=1}^k \sum_{z\in \mathbb{Z}}\sum_{g \in N}  \frac{\left|f\left(g s_{i}^{\lfloor z^{ v_i /v}\rfloor}\right) - f(g)\right|^2}{(1 + |z|)^{1+ 1/v}} + \sum_{z\in \mathbb{Z}}\sum_{g \in N}  \frac{\left|f\left(gs^{r_z}\right) - f(g)\right|^2}{(1 + |z|)^{1+ 1/v}} \label{eq: bracket_decomp}
\end{align}
For each $i=1,\ldots, k$, let's focus on the corresponding term in the first summand in Equation \ref{eq: bracket_decomp}:
\begin{align*}
    \sum_{z\in \mathbb{Z}}\sum_{g \in N}  \frac{\left|f\left(gs_{i}^{\lfloor z^{ v_i /v}\rfloor}\right) - f(g)\right|^2}{(1 + |z|)^{1+ 1/v}} & \preccurlyeq  \sum_{x \in \mathbb{Z}} \sum_{g \in N} \frac{|f(g s_i^x) - f(g)|^2(1+|x|)^{
    v/v_i
    -1}}{(1 + |x|^{ 
    v/v_i})^{1+1/v}} \\ 
    & \preccurlyeq \sum_{x \in \mathbb{Z}} \sum_{g \in N} \frac{|f(g s_i^x) - f(g)|^2}{(1 + |x|)^{1+\frac{1}{v_i}}} \preccurlyeq \mathcal{E}_{\mu_{s_i,\alpha_i}} (f,f)  
\end{align*}
In the first inequality, we express the sum in terms of a new parameter $x$ with $|x|\asymp |z|^{v_i /v}$ and note that at most $|x|^{v/v_i-1}$ integers $z$ give the same $x$. For the
second summand in Equation \ref{eq: bracket_decomp}, by results in Remark \ref{rem: norm}  and 
Lemma \ref{lem: floor}, 
$$\|s^{r_z}\|_{S,\mathbf{v}} \preccurlyeq (r_z)^\frac{1}{v} \preccurlyeq (1+|z|)^{\frac{1}{v} - \frac{v_*}{v^2}}$$
    where $v_* = \min \{v_i: i=1,\ldots,k \}$. 
Then Theorem \ref{thm: pp-inequality} implies 
    \begin{align*}
    \sum_{z\in \mathbb{Z}}\sum_{g \in N}  \frac{\left|f\left(gs^{r_z}\right) - f(g)\right|^2}{(1 + |z|)^{1+ 1/v}} \preccurlyeq   \sum_{z\in \mathbb{Z}}\frac{    \|s^{r_z}\|_{S ,\mathbf{v} } \mathcal{E}_{\mu_{S,\bm{\alpha}}} (f,f)   }{(1 + |z|)^{1+ 1/v}}
   \preccurlyeq   \sum_{z\in \mathbb{Z}}\frac{\mathcal{E}_{\mu_{S,\bm{\alpha}}} (f,f) }{(1 + |z|)^{1+ \frac{v_*}{v^2 }}} 
    \end{align*}
which is bounded by some constant multiple of $\mathcal{E}_{\mu_{S,\bm{\alpha}}} (f,f) $ as the sum converges.  This proves the case $k=c$.
Now assume $k < c$. For any $z\in \mathbb{Z}$ (assume $z>0$ again for definiteness),  we can  write
$$s^z = \left[(s_{1})^{\lfloor z^{ v_1/v }\rfloor }, \ldots, (s_{k})^{\lfloor z^{ v_k/v}\rfloor}\right] s^{
    z - \prod_{i=1}^k \lfloor z^{ v_i/v}\rfloor} \cdot \xi $$
where $\xi$ is a product of power of commutators of length greater than $k$. A similar argument to that used in the base case and the induction hypothesis, combined with the  inequality established by Lemma \ref{lem: decomp}, yields the desired result.

\end{proof}

\begin{proof} (of Theorem \ref{thm: rho_mu})
Throughout the proof, we repeatedly invoke the notation introduced in Section \ref{sec: series} and
may drop the subscripts in  $\rho_{S,\mathbf{v}}$,
$ \|\cdot\|_{S,\mathbf{v}}$ and $\gamma_{S,\mathbf{v}}$. In \cite{SCZ-nil}, it's proved that $ \mathcal{E}_{\mu_{S,\bm{\alpha}}}\preccurlyeq \mathcal{E}_{\rho_{S,\mathbf{v}}}$. It remains to show the other direction of inequality.  For $i=1,\ldots, j_*$, relabel the commutators in $\mathcal{C}_{w_i}(S)$ as $ \{c_{i,r}, r  = 1,\ldots, \delta_i\}$ where $\delta_i = |\mathcal{C}_{w_i}(S)|$. Write 
   $$\mathcal{E}_{\rho_{S,\mathbf{v}}}(f,f)  = \sum_{k=1}^\infty \sum_{\substack{h \in N: \\
    2^{k-1} \le \|h\| <  2^{k}}} \frac{\sum_{g \in N} |f(gh) - f(g)|^2}{ \|h\|^{1 +\gamma}}$$
    For $R\ge 1$, let $\chi(R)$ be a collection of tuples defined by
    $$\chi(R) = \left\{(x_{i,r})_{\subalign{ i &= 1, \ldots, j_* \\  r &= 1,\ldots, \delta_i}}: |x_{i,r}| \le C\max\left(|c_{i,r}|, R^{w_i} \right) \right\} $$
where $|g|$ denotes the group order of the group element $g$. 
Applying the results in \cite{SCZ-nil}, we derive  the following upper bound for $ $ 
\begin{align*}
    \mathcal{E}_{\rho_{S,\mathbf{v}}}(f,f)  & \preccurlyeq  \sum_{k=1}^\infty 
    \sum_{ x \in \chi(2^k)
 } \frac{\sum_{g} \left |f\left(g  \prod_{i=1}^{j_*}\prod_{r = 1}^{\delta_i} c_{i,r}^{x_{i,r}}
\right) - f(g)\right|^2}{ 2^{k(1 + \gamma)}} \\
& \preccurlyeq \sum_{i=1}^{j_*}\sum_{r = 1}^{\delta_i}  \sum_{k=1}^\infty  \sum_{ x \in \chi(2^k)
 }  \frac{\sum_{g} |f(g c_{i,r}^{x_{i,r}} ) - f(g)|^2}{ 2^{k(1 + \gamma)} } = \sum_{i=1}^{j_*}\sum_{r = 1}^{\delta_i}  A_{i,r} 
\end{align*}
where for every $(i,r )$ pair
$$A_{i,r} := \sum_{k=1}^\infty  \sum_{ x \in \chi(2^k)
 }  \frac{\sum_{g} |f(g c_{i,r}^{x_{i,r}} ) - f(g)|^2}{ 2^{k(1 + \gamma)} } $$
 Clearly, if $c_{i,r}$ is torsion in $N$, $A_{i,r} \preccurlyeq \mathcal{E}_{\mu_{S, \bm{\alpha}}}(f,f)$ trivially. 
 For a non-torsion $c_{i,r}$, the corresponding  $A_{i,r}$ can be rewritten as follows: 
\begin{align*}
A_{i,r} & \asymp   \sum_{k=1}^\infty  \sum_{ |x_{i,r}| \le C 2^{k w_i} }  \frac{\sum_{g} |f(g c_{i,r}^{x_{i,r}} ) - f(g)|^2}{ 2^{k(1 + w_i)} } \\
    & = 2\sum_{x=1}^\infty \sum_{g}  |f(g  c_{i,r}^x) - f(g)|^2 \sum_{k \ge \frac{\log_2(x)}{ w_i}} \frac{1}{2^{k(1 + w_i)}}\\
    &\asymp 2 \sum_{x=1}^\infty  \frac{\sum_{g } |f(g c_{i,r}^{x}) - f(g)|^2 }{|x|^{1 + 1/w_i}} = 2 \mathcal{E}_{c_{i,r}, \frac{1}{w_i}} (f,f) \preccurlyeq \mathcal{E}_{\mu_{S,\bm{\alpha}}}(f,f) 
\end{align*}
The last inequality follows from the fact that, 
 by construction, $w_i$ is exactly the sum of the assigned weight (i.e. the $v_\bullet$'s) of the generators $s_\bullet$'s appearing in the commutator $c_{i,r}$ and Theorem \ref{theo: commu-dirichlet}. This completes the proof. 
\end{proof}

\subsubsection{Coordinate-wise and $1d$-singular measures}
In this section we consider a $1d$ singular measure $\mu_{S,\bm{\alpha}}$ and the associated coordinate-wise measure $\nu_{S,\bm{\alpha}}$ where 
 $S=(s_1,\dots, s_k)$ and $\bm{\alpha}=(\alpha_1,\dots,\alpha_k) \in (0,2)^k$. 

\begin{theo}
\label{thm: coord}
  The following two Dirichlet forms are comparable  $$\mathcal{E}_{\mu_{S,\bm{\alpha}}} \asymp  \mathcal{E}_{\nu_{S, \bm{\alpha}}}$$
\end{theo}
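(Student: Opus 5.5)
We prove the two inequalities separately. Throughout, we write $\nu=\nu_{S,\bm\alpha}$, $\mu=\mu_{S,\bm\alpha}$, and $v_i=1/\alpha_i$. We use the normalization $\sum_{\bar a\in\mathbb Z^k}(1+\sum_i|a_i|^{\alpha_i})^{-1-\sum_i 1/\alpha_i}<\infty$, which holds because the number of $\bar a$ with $\sum_i|a_i|^{\alpha_i}\le r$ is $\asymp r^{\sum_i 1/\alpha_i}$. Let $H=\langle s_1,\dots,s_k\rangle$; both measures live on $H$. By symmetry of the summand, we can write
$$\mathcal E_\nu(f,f)\asymp \sum_{\bar a\in\mathbb Z^k}\frac{\sum_g|f(g\,s_1^{a_1}\cdots s_k^{a_k})-f(g)|^2}{(1+\sum_i|a_i|^{\alpha_i})^{1+\sum_i 1/\alpha_i}}.$$

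\textbf{Upper bound $\mathcal E_\nu\preccurlyeq\mathcal E_\mu$.} We telescope along the product, using the elementary inequality (Lemma \ref{lem: decomp}) $|f(gx_1\cdots x_k)-f(g)|^2\le k\sum_{j}|f(gx_1\cdots x_j)-f(gx_1\cdots x_{j-1})|^2$. Summing over $g$ and using right-invariance of counting measure, the $j$-th term becomes $\sum_g|f(gs_j^{a_j})-f(g)|^2$. For fixed $a_j$, we then sum the weight over the remaining coordinates $a_i$, $i\neq j$. Setting $t=1+|a_j|^{\alpha_j}$, the sum $\sum_{a_i,i\ne j}(t+\sum_{i\ne j}|a_i|^{\alpha_i})^{-1-\sum_i1/\alpha_i}$ is $\asymp t^{-1-1/\alpha_j}\asymp(1+|a_j|)^{-\alpha_j-1}$. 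This is the standard integral comparison with the volume count $r^{\sum_{i\neq j}1/\alpha_i}$. The result is exactly $\mathcal E_{\mu_{s_j,\alpha_j}}$, and summing over $j$ gives the upper bound.

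\textbf{Lower bound $\mathcal E_\mu\preccurlyeq\mathcal E_\nu$.} Here we cannot isolate a single coordinate directly, so we use an averaging trick. Fix $j$ and $x\in\mathbb Z$, and set $R=|x|^{\alpha_j}$, so that the scale in coordinate $i$ is $R^{1/\alpha_i}$. Consider the box $B_R=\{\bar b:|b_i|\le R^{1/\alpha_i}\}$, which has cardinality $\asymp R^{\sum 1/\alpha_i}$. For $\bar b\in B_R$, let $\bar b'$ equal $\bar b$ except $b'_j=b_j+x$. Write $u=s_1^{b_1}\cdots s_{j-1}^{b_{j-1}}$ and $w=s_{j+1}^{b_{j+1}}\cdots s_k^{b_k}$. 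Then $\pi(\bar b)=u s_j^{b_j}w$ and $\pi(\bar b')=us_j^{b_j}s_j^xw$. The difficulty is that $s_j^x$ sits in the middle of the word, whereas $\mathcal E_\mu$ involves right multiplication $g\mapsto gs_j^x$. We handle this by a change of variables. Substitute $g=h\,\pi(\bar b')^{-1}\cdots$; more precisely, we use
$$|f(gs_j^x)-f(g)|^2\le 2|f(g s_j^x)-f(g s_j^x w\cdot w^{-1})|^2\ldots$$
It is cleaner to write $gs_j^x=g'\,\pi(\bar b')^{\,}$ with $g'=g s_j^{-b_j}u^{-1}$ after conjugating $w$ away. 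For this reason, we instead choose $\bar b$ with $b_i=0$ for $i>j$ and use both $\bar b$ and the tuple $\bar b''$ having zero in positions $\le j$ as appropriate. Then
$$f(gs_j^x)-f(g)=[f(g s_j^{x}) - f(g\,u^{-1}\!\cdot\! u s_j^{x})]+\dots,$$
which expresses the increment along $s_j^x$ as the difference of two increments $f(y\pi(\bar a))-f(y)$ and $f(y\pi(\bar a'))-f(y)$ with $y=gs_j^{-b_j}u^{-1}$ ranging bijectively over $N$. Concretely, $gs_j^x=y\,\pi(b_1,\dots,b_{j-1},b_j+x,0,\dots)$ and $g=y\,\pi(b_1,\dots,b_j,0,\dots)$, so
$$|f(gs_j^x)-f(g)|^2\le 2|f(y\pi(\bar a'))-f(y)|^2+2|f(y\pi(\bar a))-f(y)|^2.$$
We average this over the $\asymp\prod_{i\le j}R^{1/\alpha_i}$ choices of $(b_1,\dots,b_j)$ in the box and multiply by $(1+|x|)^{-1-\alpha_j}$. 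Each resulting $\bar a$ has $\sum|a_i|^{\alpha_i}\preccurlyeq R$, so its $\nu$-weight is $\succcurlyeq R^{-1-\sum_{i}1/\alpha_i}$.

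\textbf{Main obstacle: the counting.} The key step is to check that the multiplicity of a given $\bar a$, summed over $x$, is controlled. Coordinates $i>j$ vanish, which reduces the dimension. We need $\sum_x (1+|x|)^{-1-\alpha_j}R^{-\sum_{i\le j}1/\alpha_i}\mathbf 1[\bar a \text{ arises}]\preccurlyeq(1+\sum|a_i|^{\alpha_i})^{-1-\sum_{i\le k}1/\alpha_i}$. This is not immediate: with only $j$ nonzero coordinates, we lose the factor $R^{-\sum_{i>j}1/\alpha_i}$. To repair this, we keep all $k$ coordinates random in the box. The trailing factor $w$ then appears in both terms, and we cancel it by the substitution $y\mapsto$ right multiplication by $w^{-1}$ inside the difference, using $\sum_g|f(gs_j^xw)-f(gw)|^2$ versus the target. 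This requires $s_j^x w=w s_j^x\cdot[\,\cdot\,]$, and the commutator $[\,s_j^{x}, w]$ would again need to be controlled. I expect this commutator error, bounded via the pseudo-Poincaré inequality (Theorem \ref{thm: pp-inequality}) together with the norm estimates of Remark \ref{rem: norm}, to be the genuinely delicate point. If it fails, a fallback is to deduce the lower bound from Theorem \ref{thm: rho_mu}: show instead $\mathcal E_{\rho_{S,\mathbf v}}\preccurlyeq\mathcal E_\nu$ by proving a pseudo-Poincaré inequality for $\nu$ directly on the elements $\pi_S(\bar a)$.
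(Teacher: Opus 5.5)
Your upper bound $\mathcal E_{\nu_{S,\bm{\alpha}}}\preccurlyeq\mathcal E_{\mu_{S,\bm{\alpha}}}$ is the paper's argument: telescoping via Lemma~\ref{lem: decomp}, then summing out the other coordinates as in Lemma~\ref{lem: sum}. The lower bound, however, is not proved. You stop at an uncontrolled commutator, and neither of the repairs you suggest works.

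Estimating the increment along $[s_j^x,w]$ with Theorem~\ref{thm: pp-inequality} is circular. That inequality is stated in terms of $\mathcal E_{\mu_{S,\bm{\alpha}}}$, which is the form you are trying to dominate. The constant is also not small: $\|[s_j^x,w]\|_{S,\mathbf v}$ can be of order $R=|x|^{\alpha_j}$, so against the weight $(1+|x|)^{-1-\alpha_j}$ the error sums like $\sum_x|x|^{-1}=\infty$. Remark~\ref{rem: norm} does not help, since its constants depend on the element.

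The fallback is not an argument either. A pseudo-Poincar\'e inequality for $\nu_{S,\bm{\alpha}}$ alone cannot give $\mathcal E_{\rho_{S,\mathbf v}}\preccurlyeq\mathcal E_{\nu_{S,\bm{\alpha}}}$, because $\sum_h\|h\|_{S,\mathbf v}\,\rho_{S,\mathbf v}(h)$ diverges logarithmically. This is why the proof of Theorem~\ref{thm: rho_mu} goes through Theorem~\ref{theo: commu-dirichlet}.

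The missing idea is that no commutator has to be estimated at all. The conjugating block is paid for by induction on the coordinate index.

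In your setup with all $k$ coordinates random in the box, you have exactly $\pi(\bar b)^{-1}\pi(\bar b')=w^{-1}s_j^xw$. Hence $s_j^x=w\,\pi(\bar b)^{-1}\pi(\bar b')\,w^{-1}$, and Lemma~\ref{lem: decomp} bounds $\sum_g|f(gs_j^x)-f(g)|^2$ by the increments along $\pi(\bar b)$, $\pi(\bar b')$ and $s_i^{b_i}$ for $i>j$.

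After your box averaging and summation over $x$, two things happen:
\begin{itemize}
\item Each $\bar a$ receives total weight $\preccurlyeq(1+\max_i|a_i|^{\alpha_i})^{-1-\sum_i 1/\alpha_i}$, which is its $\nu_{S,\bm{\alpha}}$-weight. So the first two terms are $\preccurlyeq\mathcal E_{\nu_{S,\bm{\alpha}}}$.
\item The term along $s_i^{b}$ receives weight $\asymp|b|^{-1-\alpha_i}$, so it is $\preccurlyeq\mathcal E_{\mu_{\{s_i\},\alpha_i}}$. That form is $\preccurlyeq\mathcal E_{\nu_{S,\bm{\alpha}}}$ by backward induction on $j$; for $j=k$ we have $w=e$ and nothing needs absorbing.
\end{itemize}

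The paper runs the mirror image. With $\bar v$ supported on the first $p-1$ coordinates and $\bar w$ on the last $k-p$, it writes
\[
s_p^{a}=\pi(\bar v)^{-1}\bigl(\pi(\bar v)s_p^{2a}\pi(\bar w)\bigr)\bigl(\pi(\bar v)s_p^{a}\pi(\bar w)\bigr)^{-1}\pi(\bar v).
\]
Here the trailing block cancels exactly. The leading block $\pi(\bar v)$, made of powers of $s_1,\dots,s_{p-1}$, is absorbed by forward induction. The $\nu_{S,\bm{\alpha}}$-density itself, via Lemma~\ref{lem: sum}, plays the role of your uniform box average.
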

\begin{proof} In this proof,  for a $k$-dimensional vector $\bar{a}=(a_1,\dots,a_k)$, $|\bar{a}|= \sum_{i=1}^k |\bar{a}_i|$ denotes the 1-norm.

Note that 
$\mathcal{E}_{\mu_{S,\bm{\alpha}}}  (f,f)  \asymp \sum_{i=1}^k \mathcal{E}_{\mu_{\{s_i\},\alpha_i}}(f,f) $. By Lemma \ref{lem: decomp} and Lemma \ref{lem: sum}, 
$ \mathcal{E}_{\nu_{S, \bm{\alpha}} }(f,f)$ is bounded above ($\preccurlyeq$) by
    \begin{align*}
          \sum_{\bar{a}\in \mathbb{Z}^k} \frac{ \sum_{g\in N}|f(g \pi(\bar{a})) - f(g)|^2 }{ \left(1+ \sum_{i=1}^k |\bar{a}_i|^{\alpha_i}\right)^{1+ \sum_{i=1}^k \frac{1}{\alpha_i}} }
         \preccurlyeq   \sum_{j=1}^k  \sum_{\bar{a}\in \mathbb{Z}^k}
        \frac{ \sum_{g\in N} |f(g s_j^{\bar{a}_j}) - f(g)|^2 }{ \left(1+ \sum_{i=1}^k |\bar{a}_i|^{\alpha_i}\right)^{1 + \sum_{i=1}^k \frac{1}{\alpha_i}} } \\
          = \sum_{j=1}^k  \sum_{
      \bar{a}\in \mathbb{Z}^k
     }
        \frac{  \sum_{g\in N} |f(g s_j^{\bar{a}_j}) - f(g)|^2 }{(1 + |\bar{a}_j|^{\alpha_j} + \sum_{i\ne j}|\bar{a}_i|^{\alpha_i}
        )^{1 + \sum_{i=1}^k \frac{1}{\alpha_i}}}
        \asymp \sum_{j=1}^k  \sum_{
       a \in \mathbb{Z} }
        \frac{ \sum_{g\in N} |f(g s_j^{a}) - f(g)|^2 }{(1 + |a|^{\alpha_j})^{1 + \frac{1}{\alpha_j}}}  \\
         \asymp  \sum_{j=1}^k  \sum_{
       a \in \mathbb{Z} }
        \frac{ \sum_{g\in N} |f(g s_j^{a}) - f(g)|^2 }{(1 + |a|)^{\alpha_j + 1}}  
         = \sum_{j=1}^k 
 \mathcal{E}_{\mu_{\{s_j\},\alpha_j}}(f,f) 
    \end{align*}
This proves 
 $\mathcal{E}_{\nu_{S, \bm{\alpha}}}(f,f) \preccurlyeq \mathcal{E}_{\mu_{S, \bm{\alpha}}}(f,f)$.

To obtain the reverse inequality, we will prove that  $\mathcal{E}_{\mu_{\{s_i\},\alpha_i}} (f,f)  \le \mathcal{E}_{\nu_{S, \alpha}}(f,f)$ by induction on $i$. Consider the base case $\mathcal{E}_{\mu_{\{s_1\},\alpha_1}}(f,f)  $. Note that for any 
 $a \in \mathbb{Z}^1$ and $\bar{v} \in 0 \times \mathbb{Z}^{k-1}$, i.e. a  $k$-dimensional integer-valued vector with the first entry 0, we can write
$s_1^{a} = s_1^{2a}  \pi(\bar{v}) \cdot \pi(\bar{v})^{-1} s_1^{-a}$. Again, by Lemma \ref{lem: decomp} and Lemma \ref{lem: sum}, 
\begin{align*}
&\mathcal{E}_{\mu_{\{s_1\},\alpha_1}} (f,f)  
 =  \sum_{a \in \mathbb{Z}}\sum_{g\in G} |f(gs_1^{a}) - f(g)|^2  \frac{1}{(1+ |a|)^{1+\alpha_1}} \\
    &\asymp \sum_{a \in \mathbb{Z}}\sum_{g\in G} |f(gs_1^{a}) - f(g)|^2  \sum_{
\bar{v}  \in 0\times \mathbb{Z}^{k-1}} \frac{1}{(1 + |a|^{\alpha_1} + \sum_{i=2}^k 
|\bar{v}_i|^{\alpha_i})^{1 + \sum_{i=1}^k \frac{1}{\alpha_i}}} \\
   & \preccurlyeq 
    \sum_{ 
\substack{ a \in \mathbb{Z}\\\bar{v} \in 0\times \mathbb{Z}^{k-1}}}  
   \frac{\sum_{g\in G} |f(g) - f(gs_1^{2a}  \pi(\bar{v} ))|^2 + \sum_{g\in G} |f(gs_1^{a}  \pi(\bar{v} )) - f(g)|^2  }{(1 + |a|^{\alpha_1} + \sum_{i=2}^k 
|\bar{v}_i|^{\alpha_i})^{1 + \sum_{i=1}^k \frac{1}{\alpha_i}}} \\
   & \preccurlyeq \mathcal{E}_{\nu_{S,\bm{\alpha}}}(f,f)
\end{align*}

Now consider $1 < p \le k$. For any $a\in \mathbb{Z}$, 
$\bar{v} \in \mathbb{Z}^{p-1} \times 0^{k-p+1} =: V_1$ and $\bar{w} \in  0^{p} \times \mathbb{Z}^{k-p}=: V_2$, we can decompose 
\begin{align*}
    s_p^{a} = 
   \prod_{i=p-1}^{1} s_i^{-\bar{v}_i}\cdot 
    \pi(\bar{v}) s_p^{2a}   \pi(\bar{w}) \cdot  \pi(\bar{w})^{-1}s_p^{-a}  \pi(\bar{v})^{-1}\cdot  \prod_{i=1}^{p-1} s_i^{\bar{v}_i} 
\end{align*}
It again follows from Lemma \ref{lem: decomp} and Lemma \ref{lem: sum} that, 
\begin{align*}
    & \mathcal{E}_{\mu_{\{s_p\},\alpha_p}}(f,f)  \\
    & = \sum_{a\in \mathbb{Z}} \sum_{g\in G}   \frac{|f(gs_p^{a}) - f(g)|^2}{(1+ |a|)^{1+\alpha_p}} \\
 & \asymp  \sum_{\substack{ a \in \mathbb{Z}\\\bar{v} \in V_1\\ \bar{w} \in V_2} } \sum_{g\in G}    
\frac{|f(gs_p^{a}) - f(g)|^2 }{ (1+ |a|^{\alpha_p} + 
\sum_{i=1}^{p-1} |\bar{v}_i|^{\alpha_i} + \sum_{i=p+1}^k |\bar{w}|^{\alpha_k})^{1 + \sum_{i=1}^k \frac{1}{\alpha_i}}}\\
&\preccurlyeq    \sum_{\substack{ a \in \mathbb{Z}\\\bar{v} \in V_1\\ \bar{w} \in V_2} }  \sum_{g\in G}    \frac{ | f(g  \pi(\bar{v}) s_p^{2a}   \pi(\bar{w}))- f(g )|^2 + |f(g \pi(\bar{v}) s_p^{a}   \pi(\bar{w})) - f(g) |^2}{ (1+ |a|^{\alpha_p} + 
\sum_{i=1}^{p-1} |\bar{v}_i|^{\alpha_i} + \sum_{i=p+1}^k |\bar{w}|^{\alpha_k})^{1 + \sum_{i=1}^k \frac{1}{\alpha_i}}}  \\
 &\quad +  2 \sum_{j=1}^{p-1} \sum_{\substack{ a \in \mathbb{Z}\\\bar{v} \in V_1\\ \bar{w} \in V_2} }  \sum_{g\in G}    \frac{  |f(g) - f(gs_j^{\bar{v}_j})|^2 }{ (1+ |a|^{\alpha_p} + 
\sum_{i=1}^{p-1} |\bar{v}_i|^{\alpha_i} + \sum_{i=p+1}^k |\bar{w}|^{\alpha_k})^{1 + \sum_{i=1}^k \frac{1}{\alpha_i}}} \\
 & 
 \preccurlyeq 
2 \mathcal{E}_{\nu_{S,\bm{\alpha}}}(f,f) + 
  2 \sum_{j=1}^{p-1} \sum_{\bar{u}\in \mathbb{Z}^k }  \sum_{g\in G}    \frac{  |f(g) - f(gs_j^{\bar{u}_j})|^2 }{(1+  
  \sum_{i=1}^k
  |\bar{u}_i|^{\alpha_i})^{1 + \sum_{i=1}^k \frac{1}{\alpha_i}}}\\
  & \preccurlyeq 2\mathcal{E}_{\nu_{S,\bm{\alpha}}}(f,f) + 2\sum_{j=1}^{p-1} \mathcal{E}_{\mu_{\{s_j\},\alpha_j}}(f,f) 
     \end{align*}
     Finally, each of $\mathcal{E}_{\mu_{\{s_j\},\alpha_j}}(f,f) $ for $i=1,\ldots, p-1$ has upper bound $ \mathcal{E}_{\nu_{S,\bm{\alpha}}}(f,f) $ by induction hypothesis, completing the proof. 
\end{proof}

\section{On a group of polynomial growth $
G$}
Recall that $G$ is a group of polynomial volume growth and that $N$ is a normal nilpotent subgroup of $G$ with finite index. The set
$\{x_0=e,x_1,\dots,x_p\}$ is a set of coset representatives for $N$ in $G$ (left or right, it does not matter, as $N$ is normal).

We begin this section with two rather general theorems providing condition under which one can  compare Dirichlet forms associated with measures on $G$ and $N$. Throughout this section, let $N$ be normal in $G$ with finite coset representative $X$.
\begin{lem}
\label{lem: Dirichlet-decomp} 
Let  $\nu$ be a symmetric probability measure on $N$ with the property that $\nu(n) \asymp \nu(xnx^{-1})$ for all $x\in X$ and $n\in N$.  
Then, for any finitely supported function $f$ on $G$ and its restriction $\left. f\right|_N$ to $N$, we have  
$$\mathcal{E}_{\nu, G}(f,f)\preccurlyeq  \mathcal{E}_{\nu, N}(f|_N,f|_N) + \mathcal{E}_{\kappa_X, G}(f,f)$$
where $\kappa_X$ is the uniform measure on $X\cup X^{-1}$. 
\end{lem}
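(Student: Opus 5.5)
The plan is to split the sum defining $\mathcal{E}_{\nu,G}$ according to the cosets of $N$. On each coset I move the base point into $N$ by a single step $x\in X$, then use normality of $N$ to turn the jump $n$ into its conjugate $xnx^{-1}\in N$.

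Since $N$ is normal with coset representatives $X$, every $g\in G$ can be written uniquely as $g=mx$ with $m\in N$ and $x\in X$. Then
$$\mathcal{E}_{\nu,G}(f,f)=\frac12\sum_{x\in X}\sum_{m\in N}\sum_{n\in N}|f(mxn)-f(mx)|^2\nu(n).$$
For fixed $x$, set $n'=xnx^{-1}\in N$, so that $mxn=mn'x$. Then I insert the two intermediate points $f(mn')$ and $f(m)$ and apply $|a+b+c|^2\le 3(|a|^2+|b|^2+|c|^2)$:
$$|f(mn'x)-f(mx)|^2\le 3\Big(|f(mn'x)-f(mn')|^2+|f(mn')-f(m)|^2+|f(m)-f(mx)|^2\Big).$$
This gives three groups of terms, and I bound each in turn.

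\emph{Middle term.} For each fixed $x$, the map $n\mapsto xnx^{-1}$ is a bijection of $N$, and by hypothesis $\nu(n)\asymp\nu(xnx^{-1})$. So
$$\sum_{m,n\in N}|f(mxnx^{-1})-f(m)|^2\nu(n)\preccurlyeq\sum_{m,n'\in N}|f(mn')-f(m)|^2\nu(n')=2\,\mathcal{E}_{\nu,N}(f|_N,f|_N).$$
Summing over the finitely many $x\in X$ costs only the constant $|X|$.

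\emph{First and third terms.} For the third term, I use $\sum_n\nu(n)=1$ to get $\sum_{m\in N}|f(mx)-f(m)|^2$. For the first term, for each fixed $n$ the substitution $m\mapsto mn'$ is a bijection of $N$, so again $\sum_{m}|f(mx)-f(m)|^2$ appears, weighted by $\sum_n \nu(n)=1$. Each such quantity is at most
$$\sum_{g\in G}|f(gx)-f(g)|^2\preccurlyeq \mathcal{E}_{\kappa_X,G}(f,f),$$
because $\kappa_X$ gives mass at least $1/(2|X|)$ to each $x\in X$. Summing over $x\in X$ again costs only a constant.

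Collecting the three bounds gives the claimed inequality. The only delicate point is making sure the conjugation hypothesis $\nu(n)\asymp\nu(xnx^{-1})$ is applied with the correct orientation (conjugation by $x$ rather than $x^{-1}$), and that every reindexing stays inside $N$ so that the middle term really is $\mathcal{E}_{\nu,N}$ of the restriction $f|_N$. Both follow from normality of $N$. Finite support of $f$ guarantees that all the sums converge, so the reindexings are legitimate.
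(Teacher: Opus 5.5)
Your proposal is correct and follows essentially the same route as the paper. Both arguments write $g=mx$, rewrite $mxn$ as $mn'x$ with $n'=xnx^{-1}\in N$, insert the intermediate values $f(mn')$ and $f(m)$, and bound the three resulting sums by $\mathcal{E}_{\kappa_X,G}$, $\mathcal{E}_{\nu,N}(f|_N,f|_N)$ and $\mathcal{E}_{\kappa_X,G}$. The only cosmetic difference is ordering: the paper reindexes the conjugation (using $\nu(x^{-1}nx)\asymp\nu(n)$) for the whole sum before splitting, whereas you apply the comparability only in the middle term.
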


\begin{proof}
With the identification $G = NX$, we have
\begin{align*}
     & \mathcal{E}_{\nu, G} (f,f) 
     = \sum_{\substack{x\in X\\ m\in N\\n\in N}} |f(mxn) - f(mx)|^2 \nu(n)\\
      & = \sum_{\substack{x\in X\\ m\in N\\n\in N}} |f(mxnx^{-1}x) - f(mx)|^2 \nu(x^{-1}nx)  \asymp \sum_{\substack{x\in X\\ m\in N\\n\in N}} |f(mnx) - f(mx)|^2 \nu(n) \\
\end{align*}
Now, for every $x \in X, m \in N, n\in N$, write 
$$|f(mnx) - f(mx)|^2 \preccurlyeq |f(mnx) -f(mn)|^2 + |f(mn) - f(m)|^2 + |f(m) - f(mx)|^2$$
and it follows 
\begin{align*}
    &  \sum_{\substack{x\in X\\ m\in N\\n\in N}} |f(mnx) - f(mx)|^2 \nu(n)  \\
    \preccurlyeq &   \sum_{\substack{x\in X\\ m\in N\\n\in N}} |f(mnx) - f(mn)|^2 \nu(n) +  \sum_{\substack{x\in X\\ m\in N\\n\in N}} |f(mn) - f(m)|^2 \nu(n) \\
     & + 
    \sum_{\substack{x\in X\\ m\in N\\n\in N}} |f(m) - f(mx)|^2 \nu(n)  \\
   \asymp  &   \sum_{\substack{x\in X\\ m\in N}} |f(mx) - f(m)|^2 \sum_{n \in N} \nu(n) + \mathcal{E}_{\nu, N}(f|_N, f|_N) 
   \\
    & \asymp \mathcal{E}_{\kappa_X, G}(f,f) + \mathcal{E}_{\nu, N}(f|_N, f|_N)  
\end{align*}
as desired. 
\end{proof}

\begin{theo}
\label{theo: GN-Dirich}
    Let $\nu_G$ be a symmetric probability measure with support $\Gamma \subseteq G$. 
    Suppose there exists a finite set $Y\subset \Gamma$ satisfying $\Gamma = (\Gamma \cap N) Y$ and for any $\gamma = ny$, $\gamma \in \Gamma, n\in \Gamma \cap N$ and $y \in Y$, 
    $\mu_G(\gamma) \asymp \mu_G(n)$. Let 
    $\nu_N$ be a symmetric probability measure supported on 
    $$\bigcup_{x \in X} x (\Gamma\cap N) x^{-1}$$ with 
    $$\nu_N (n) \asymp \nu_G(n) \quad \forall n \in \Gamma \cap N$$
    and 
    $$\nu_N(n) \asymp \nu_N(xnx^{-1}) \quad \forall n \in \Gamma \cap N, x \in X$$
Then     
$$ \mathcal{E}_{\nu_G}(f,f) + \mathcal{E}_{\kappa_X, G}(f,f) + \mathcal{E}_{\kappa_Y}(f,f) \asymp \mathcal{E}_{\kappa_X} (f,f) + \mathcal{E}_{\kappa_Y}(f,f)   + \mathcal{E}_{\nu_N, N}(f|_N,f|_N) $$
\end{theo}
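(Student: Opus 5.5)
We prove the two inequalities separately, in both directions adding $\mathcal{E}_{\kappa_X}+\mathcal{E}_{\kappa_Y}$ to absorb the finite error terms. Throughout we use the elementary decomposition $|f(gab)-f(g)|^2\le 2|f(gab)-f(ga)|^2+2|f(ga)-f(g)|^2$ (the content of Lemma \ref{lem: decomp}) and the translation invariance of counting measure on $G$. We read $\mu_G$ in the hypothesis as $\nu_G$.

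\textbf{Upper bound for $\mathcal{E}_{\nu_G}$.} Write each $\gamma\in\Gamma$ as $\gamma=ny$ with $n\in\Gamma\cap N$ and $y\in Y$. Then
$$|f(g\gamma)-f(g)|^2\preccurlyeq |f(gny)-f(gn)|^2+|f(gn)-f(g)|^2 .$$
Summing against $\nu_G(\gamma)\asymp\nu_G(n)$ requires some care, because several $\gamma$ may map to the same $n$; this multiplicity is at most $|Y|$, a constant. For the first term we substitute $g\mapsto gn^{-1}$ and use $\sum_n\nu_G(n)\preccurlyeq 1$, which bounds it by $\mathcal{E}_{\kappa_Y}(f,f)$. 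The second term is $\preccurlyeq\sum_{g\in G}\sum_{n\in N}|f(gn)-f(g)|^2\nu_N(n)=\mathcal{E}_{\nu_N,G}(f,f)$, using $\nu_G(n)\asymp\nu_N(n)$ on $\Gamma\cap N$. We then apply Lemma \ref{lem: Dirichlet-decomp}; its hypothesis $\nu_N(n)\asymp\nu_N(xnx^{-1})$ is assumed, and it extends from $\Gamma\cap N$ to the whole support $\bigcup_x x(\Gamma\cap N)x^{-1}$ by reindexing with the finitely many $x$, so for $x^{-1}$ one uses the coset representative of $x^{-1}$ modulo $N$, at the cost of a further $\kappa_X$-type correction. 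This gives $\mathcal{E}_{\nu_N,G}\preccurlyeq\mathcal{E}_{\nu_N,N}(f|_N,f|_N)+\mathcal{E}_{\kappa_X,G}$, which is the desired upper bound.

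\textbf{Lower bound.} We must show $\mathcal{E}_{\nu_N,N}(f|_N,f|_N)\preccurlyeq \mathcal{E}_{\nu_G}+\mathcal{E}_{\kappa_X}+\mathcal{E}_{\kappa_Y}$. Split the support of $\nu_N$ into the pieces $x(\Gamma\cap N)x^{-1}$ and, for each $x$, write $m\in N$ and $n'=xnx^{-1}$. Then
$$|f(mxnx^{-1})-f(m)|^2\preccurlyeq|f(mxnx^{-1})-f(mxn)|^2+|f(mxn)-f(mx)|^2+|f(mx)-f(m)|^2 .$$
The first and third terms are $\kappa_X$-increments once we sum over $m$ and use $\sum\nu_N\le1$. (Note $x^{-1}$ lies in $X\cup X^{-1}$, and $\kappa_X$ is uniform on $X\cup X^{-1}$.) The middle term, summed with weight $\nu_N(xnx^{-1})\asymp\nu_N(n)\asymp\nu_G(n)$, is bounded by $\sum_{g\in G}|f(gn)-f(g)|^2\nu_G(n)$ over $n\in\Gamma\cap N$.

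It remains to compare this with $\mathcal{E}_{\nu_G}$, and we do so via the element $ny\in\Gamma$. Using $|f(gn)-f(g)|^2\preccurlyeq |f(gny\,y^{-1})-f(gny)|^2+|f(gny)-f(g)|^2$ with a fixed $y\in Y$ such that $ny\in\Gamma$, the second term is $\preccurlyeq\mathcal{E}_{\nu_G}$ because $\nu_G(ny)\asymp\nu_G(n)$, and the first is a $\kappa_Y$-increment.

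\textbf{Main obstacle.} The step needing the most care is this last one. It requires, for each $n\in\Gamma\cap N$, some $y$ with $ny\in\Gamma$; this holds trivially if $e\in Y$, or by taking $\gamma=n$ itself, whose decomposition has $y=e$ when we may choose $e\in Y$. We therefore first normalize $Y$ to contain $e$, so that $\Gamma\cap N$ itself carries comparable $\nu_G$-mass. If that normalization is not allowed, we instead use $\gamma=ny$ as a \emph{definition} of the reachable $n$ and note that the hypotheses only constrain such $n$.

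All remaining multiplicity and bookkeeping issues involve only finitely many $x\in X$ and $y\in Y$, so they contribute constant factors.
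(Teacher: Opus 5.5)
Your proposal is correct and is essentially the paper's proof: the same splitting $\gamma=ny$ followed by Lemma \ref{lem: Dirichlet-decomp} for the upper bound, and the same three-term splitting through $mxn$ and $mx$ for the lower bound. Two simplifications are available. First, your ``main obstacle'' is not one: since $\Gamma\cap N\subseteq\Gamma$, you directly have $\sum_{g}\sum_{n\in\Gamma\cap N}|f(gn)-f(g)|^2\nu_G(n)\le 2\,\mathcal{E}_{\nu_G}(f,f)$, so no detour through $ny$ and no normalization $e\in Y$ is needed. Second, instead of extending the conjugation hypothesis to all of $\bigcup_x x(\Gamma\cap N)x^{-1}$ (this does not follow pointwise from the assumptions, and the paper passes over the point silently), run the argument of Lemma \ref{lem: Dirichlet-decomp} only over $n\in\Gamma\cap N$, writing $mxn=m(xnx^{-1})x$; then exactly the stated hypothesis $\nu_N(xnx^{-1})\asymp\nu_N(n)$ is what gets used.
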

\begin{proof} First we bound $\mathcal{E}_{\nu_G}(f,f)$ from above.
    \begin{align*}
        \mathcal{E}_{\nu_G}(f,f) & = \sum_{g \in G, h\in \Gamma} |f(gh) - f(g)|^2 \nu_G(h)  = \sum_{\substack{g \in G\\n \in \Gamma \cap N\\ y\in Y}}  |f(gny) - f(g)|^2 \nu_N(n)\\
        &\preccurlyeq \sum_{\substack{g \in G\\n \in \Gamma \cap N\\ y\in Y}}  |f(gny) - f(gn)|^2 \nu_N(n) + \sum_{\substack{g \in G\\n \in \Gamma \cap N\\ y\in Y}}  |f(gn) - f(g)|^2 \nu_N(n)\\
        & = \sum_{\substack{g \in G\\ y\in Y}}  |f(gy) - f(g)|^2 \sum_{n \in \Gamma \cap N}\nu_N(n) + |Y|\mathcal{E}_{\nu_N, G}(f,f)\\
        &\asymp \mathcal{E}_{\kappa_Y, G}(f,f) + \mathcal{E}_{\nu_N, G}(f,f)\\
        & \preccurlyeq \mathcal{E}_{\kappa_X} (f,f) + \mathcal{E}_{\kappa_Y}(f,f)   + \mathcal{E}_{\nu_N, N}(f|_N,f|_N)  \\
    \end{align*}
To prove the other direction, it suffices to prove $\mathcal{E}_{\nu_N, N}(f|_N,f|_N) \preccurlyeq \mathcal{E}_{\nu_G} (f,f) $.  
Note that
\begin{align*}
    \mathcal{E}_{\nu_{N}, N}(f|_N,f|_N) & \asymp \sum_{\substack{n\in N\\ x \in X \\ \gamma \in \Gamma \cap N\\}} |f(n x \gamma x^{-1}) - f(n)|^2\nu_N(\gamma)
\end{align*}
Obviously, we have
\begin{align*}
   & |f(n x\gamma  x^{-1}) - f(n)|^2 \\
     \preccurlyeq & |f(n x\gamma  x^{-1}) 
 -f(n x\gamma  ) |^2 + |f(n x\gamma  ) - f(nx)|^2 + |f(nx) - f(n)|^2
\end{align*}
The first term yields
\begin{align*}
    \sum_{\substack{n\in N\\ x \in X \\ \gamma \in \Gamma \cap N}}  |f(n x\gamma  x^{-1}) 
 -f(n x\gamma  ) |^2 \nu_N(\gamma) 
& \preccurlyeq\sum_{\substack{\xi\in N\\ x \in X }}  
|f(\xi) 
 -f(\xi x) |^2 \sum_{\gamma \in \Gamma \cap N}\nu_N(\gamma ) \\
 & \preccurlyeq \mathcal{E}_{\kappa_X}(f,f) .
\end{align*}
For the second we get
$$  \sum_{\substack{n\in N\\ x \in X \\ \gamma \in \Gamma \cap N}}  
 |f(n x\gamma  ) - f(nx)|^2  \nu_N(\gamma)
 \preccurlyeq \sum_{\substack{g\in G\\ \gamma \in \Gamma \cap N}}  
 |f(g\gamma  ) - f(g)|^2  \nu_G(\gamma) \preccurlyeq\mathcal{E}_{\nu_G}(f,f)$$
and contribution of the third is obviously bounded by $\mathcal{E}_{\kappa_X}(f,f)$. This completes the proof. 
\end{proof} 

\subsection{Relating measures on $G$ to measures on $N$} 
\label{sec: G_to_N}
Fix a tuple $S=(s_1\dots,s_k) \in G^k$ and a weight tuple $\mathbf{v}=(v_1,\dots,v_k)\in (1/2,\infty)^k$.

The only elements $s \in S$ itself for which the associated $v_i$ can play a critical role  are those that are not torsion, i.e., those for which the cyclic group  $\langle s\rangle $ is infinite. By the second isomorphism theorem, this is the case if and only if $\langle s\rangle \cap N$ is infinite because
$\langle s\rangle N/N\sim \langle s\rangle /\langle s\rangle \cap N$.
For such $s\in S$ there exist a unique positive integer $\epsilon$ such that $\langle s\rangle \cap N=\langle s^\epsilon\rangle$. For each $i\in \{1,\dots,k\} $ such that $\langle s\rangle$ is infinite, let $\epsilon_i$ be the positive integer such that $\langle s_i\rangle \cap N=\langle s_i^{\epsilon_i}\rangle$. Define the following tuples:
\begin{itemize}
\item the tuple $\widetilde{S_G}$ of elements in $N\subset G$ which list, in order,
the elements $s_i^{\epsilon_i}\in N$ when $i$ runs over the indices for which $\langle s_i\rangle \cap N$ is infinite.
\item The tuple $\widetilde{S_N}$ of elements in $N$ obtained by concatenating  
$$\widetilde{S_G}, x_1\widetilde{S_G} x_1^{-1},\dots,x_p\widetilde{S_G}x_p^{-1},$$ i.e.,
$$\widetilde{S_N} :=  \bigsqcup_{j=0}^p \{x_j \widetilde{S_G} x_j^{-1}\} .$$ 
\end{itemize}

Observe that the first tuple, $\widetilde{S_G}$ does not necessarily generate $G$ (nor $N$) and that the second tuple, $\widetilde{S_N}$ also does not necessarily generate $N$. For this reason, we introduce an arbitrary finite generating tuple $S_0$ for $G$ (for instance, $S$), and an arbitrary
finite generating tuple $\Sigma_0$ for $N$.

\begin{defin}
\label{def: norm_G}
Let $\widetilde{S_G}$ be the formal concatenation of $S_0$ and $S_G$.
Define the weight system $w_G$ on $\widetilde{S_G}$ by setting 
$$
w_G(\sigma) = \begin{cases}
\frac{1}{2} &\text{if }\sigma \in S_0\\
v_i &\text{if }\sigma = s_i^{\epsilon_i} \in S_G.\\
\end{cases}
$$
Set 
$$ \|g\|_G =   \|g\|_{G,S,\mathbf{v}} = \inf \left\{ \max_{\sigma \in \widetilde{S_G} }\left\{ \mbox{deg}_\sigma(\theta)^\frac{1}{w_G(\sigma) } \right\}:  \theta = g \in G, \theta \in \bigcup_{m=0}^\infty \widetilde{S_G}^m\right\} $$
\end{defin}
Note that we can choose $S_0=S$ since $S_0$ is a finite generating tuple for $G$. The particular choices made  of $S_0$ and $\Sigma_0$ are irrelevant in what follows.

\begin{defin}
\label{def: norm_N}
    Let $\widetilde{S_N}$ be the formal concatenation of $\Sigma_0$ and $S_N$. 
Define the weight system $w_N$  by setting 
$$w_N(\sigma) = 
 \begin{cases}
\frac{1}{2} &\text{if }\sigma \in \Sigma_0\\
v_i &\text{if }\sigma =x_j s_i^{\epsilon_i}  x_j^{-1}  \subseteq S_N.\\
\end{cases}
$$  Set 
$$    \|g\|_N =  \|g\|_{N,S,\mathbf{v}} = \inf \left\{ \max_{\sigma \in \widetilde{S_N}}\left\{ \mbox{deg}_\sigma(\theta)^\frac{1}{w_N(\sigma) } \right\}:  \theta = g \in G, \theta \in \bigcup_{m=0}^\infty \widetilde{S_N}^m\right\}\\ $$
Regard $S_N$ as a tuple and denote by $\mathbf{v}_N$ the tuple given by $(w_N(\sigma))_{\sigma \in S_N}$. 
\end{defin}

These definitions have their origin in \cite{SCZ-nil,CKSCWZ1}. The reader should note that these definitions refers to tuples in $N$ and $G$. That is, in these definition, $\sigma$ is regarded as an element of the tuples $\widetilde{S_G}$ or $\widetilde{S_N}$, not just an element of the corresponding group. In fact, we will often think of the elements of these tuples as letter in an abstract alphabet. This means that it is possible that one element $\sigma$ of $N$ (or $G$)
appears multiple times as distinct letters in the tuple $\widetilde{S_N}$ (or $\widetilde{S_G}$). When that's the case, the different occurrences of $\sigma$ may receive different weights.
This interpretation is also important in the definition of the (quasi-)norm with $\|g\|_G$ and $\|g\|_N$.

\begin{theo}[{\cite[Theorem 3.2.1]{CKSCWZ}}, {\cite[Th. 3.2 and Rm. 3.3]{SCZ-nil}}]
\label{thm: gamma_G}
 For all $g\in N$, $\|g\|_N\asymp \|g\|_G$. Furthermore, let $\gamma := \gamma(N, S_N, \mathbf{v}_N )$ be defined as in Equation  \ref{eq: gamma}, for all $R \ge 1$, 
    $$\left|\{g \in G: \|g\|_G\le R\}\right| \asymp \left|   \{g \in N: \|g\|_G\le R\}\right|\asymp \left|  \{g \in N: \|g\|_N\le R\}\right| \asymp R^{\gamma}$$
\end{theo}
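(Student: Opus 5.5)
We first prove $\|g\|_N\asymp\|g\|_G$ for $g\in N$, and then read off the volume estimates from Theorem \ref{thm:norm_comp} applied to the pair $(\widetilde{S_N},w_N)$.

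\textbf{The bound $\|g\|_G\preccurlyeq\|g\|_N$.} Each letter of $\widetilde{S_N}$ is a fixed word over $\widetilde{S_G}$. A letter $\sigma\in\Sigma_0$ is a fixed word in $S_0$. A letter $x_js_i^{\epsilon_i}x_j^{-1}$ is $x_j$ and $x_j^{-1}$, written in $S_0$, surrounding one letter $s_i^{\epsilon_i}$ of $S_G$. Substitute these words into an optimal word $\theta$ for $\|g\|_N=R$. The degree in $s_i^{\epsilon_i}$ is at most the sum, over the $p+1$ conjugates, of their degrees, which is $\le (p+1)R^{v_i}$. The degree in $S_0$ is $\preccurlyeq$ the total length of $\theta$, which is $\preccurlyeq R^{\max v}$. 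That is too large, because $S_0$ has weight $1/2$. So the naive substitution fails for the $S_0$-letters, and the conjugating $x_j$'s must be handled more carefully. To do this, group $\theta$ as a product of conjugates $x_j u x_j^{-1}$, where $u$ is a word in $S_G$. Consecutive letters with the same $j$ then cancel the inner $x_j^{-1}x_j$. The remaining cost must be absorbed using the fact that $\|\cdot\|_G$ is a quasi-norm with volume growth. The cleanest route is to avoid word manipulation entirely and use the algebraic description.

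\textbf{The algebraic route.} By Remark \ref{rem: norm} and the decomposition of \cite{SCZ-nil} used in the proof of Proposition \ref{frakeq}, every $g\in N$ with $\|g\|_N\le R$ is a product of boundedly many powers $c^{x_c}$. Here $c$ runs over commutators in the letters of $\widetilde{S_N}$, and $|x_c|\preccurlyeq R^{w(c)}$. It then suffices to show $\|c^{x}\|_G\preccurlyeq |x|^{1/w(c)}$ for each such $c$, since $\|\cdot\|_G$ is a quasi-norm and the product has boundedly many factors. For $c=[\sigma_1,\dots,\sigma_l]$ with $\sigma_t=x_{j_t}s_{i_t}^{\epsilon_{i_t}}x_{j_t}^{-1}$, use the splitting from the proof of Theorem \ref{theo: commu-dirichlet}:
\[
c^{x}=\bigl[\sigma_1^{y_1},\dots,\sigma_l^{y_l}\bigr]\, c^{r}\,\xi ,
\qquad y_t\approx |x|^{v_{i_t}/w(c)} .
\]
Each $\sigma_t^{y_t}=x_{j_t}(s_{i_t}^{\epsilon_{i_t}})^{y_t}x_{j_t}^{-1}$ costs $O(1)$ letters of $S_0$, which is harmless, and $y_t$ letters of weight $v_{i_t}$, which contributes $|x|^{1/w(c)}$. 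The remainder $c^{r}$ and the correction $\xi$ are handled by downward induction on $w(c)$, exactly as in that proof, using Lemma \ref{lem: floor}.

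\textbf{The bound $\|g\|_N\preccurlyeq\|g\|_G$.} Take an optimal word for $g\in N$ in $\widetilde{S_G}$, with $\|g\|_G=R$. Write it as $\tau_1\cdots\tau_m$ and track the cosets. Each letter $s_i^{\epsilon_i}$ appearing after a prefix in the coset $x_jN$ rewrites as (element of $N$)$\cdot x_js_i^{\epsilon_i}x_j^{-1}\cdot$(element of $N$). The $S_0$-letters number at most $R^{1/2}$, and each contributes a bounded element of $N$, hence $O(1)$ letters of $\Sigma_0$ (weight $1/2$). Since $g\in N$, the coset bookkeeping closes up, and we obtain a word over $\widetilde{S_N}$ with degrees $\preccurlyeq R^{w}$ in each letter. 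This direction is the standard Reidemeister--Schreier rewriting.

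\textbf{Volume estimates.} From $\|\cdot\|_N\asymp\|\cdot\|_G$ on $N$ and Theorem \ref{thm:norm_comp} for $(\widetilde{S_N},w_N)$, we get the middle and last comparisons. The first comparison follows from $G=\bigcup_j Nx_j$ together with
\[
\|nx_j\|_G\le C\bigl(\|n\|_G+\|x_j\|_G\bigr),
\]
so the ball in $G$ of radius $R$ is contained in at most $p+1$ translates of the $N$-ball of radius $CR+C$, which has volume $\asymp R^{\gamma}$. Here $\gamma$ is computed from $(S_N,\mathbf v_N)$, because the $\Sigma_0$ letters either lie in core and agree with the weight system, or do not change the rank count; this is the point to check via \eqref{eq: gamma}. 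I expect the main obstacle to be the first direction: controlling the conjugating letters at weight $1/2$, which is why I use the commutator-power decomposition there rather than direct substitution.
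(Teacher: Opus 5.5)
The paper gives no proof of this statement; it is quoted from earlier work, so I judge your outline on its own merits. Both norm comparisons are set up correctly. The Reidemeister--Schreier rewriting gives $\|g\|_N\preccurlyeq\|g\|_G$: an $S_0$-letter produces a Schreier generator $x_j\tau x_{j'}^{-1}$, which is a bounded word in $\Sigma_0$, and a letter $s_i^{\epsilon_i}$ produces exactly the letter $x_js_i^{\epsilon_i}x_j^{-1}$. For the converse, you are right that naive substitution fails. The commutator-power decomposition in $N$ followed by splitting $c^x$ inside $G$ is what works.

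The step that is actually false is your identification of the exponent. You assert that the $\Sigma_0$-letters ``do not change the rank count'', so that $\gamma$ can be computed from $(S_N,\mathbf v_N)$. They can change it. Take the paper's infinite dihedral example with $S=(u,v)$. There $S_N$ is empty, yet $\gamma=1/2$, and this exponent is carried entirely by $\Sigma_0=(uv)$ at weight $1/2$. Theorem \ref{thm:norm_comp} applied to $(\widetilde{S_N},w_N)$ delivers $\gamma=\gamma(N,\widetilde{S_N},w_N)$, and that is what the statement has to mean. With that reading your volume argument is complete, so drop the claim rather than try to verify it.

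Several details also need repair.

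First, the remainder $c^{r}$ involves the same commutator $c$, so downward induction on $w(c)$ never reaches it. Use strong induction on $|x|$ instead. Suppose $\|c^{x'}\|_G\le K'|x'|^{1/w(c)}$ for all $|x'|<|x|$. Then the splitting and Lemma \ref{lem: floor} give
$$\|c^{x}\|_G\le CK|x|^{1/w(c)}+C'K'|x|^{(1-p_*)/w(c)},\qquad p_*=\min_t w(\sigma_t)/w(c),$$
with $C'$ independent of $K'$. This is at most $K'|x|^{1/w(c)}$ for large $|x|$ once $K'\ge 2CK$, and only finitely many small $|x|$ remain.

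Second, for $\xi$ you must state the exponent bound. A longer commutator $c'$ in which $\sigma_t$ occurs $m_t$ times appears with exponent $\preccurlyeq\prod_t|y_t|^{m_t}\asymp|x|^{w(c')/w(c)}$. Only this bound makes the induction on length give $\|\xi\|_G\preccurlyeq|x|^{1/w(c)}$.

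Third, $c$ may contain $\Sigma_0$-letters. These are harmless: $\sigma^{y}$ costs $O(|y|)$ letters of weight $1/2$, and $y^2\asymp|x|^{1/w(c)}$.

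Finally, covering the $G$-ball by translates of $N$-balls uses $\|n\|_G\le C\bigl(\|nx_j\|_G+\|x_j^{-1}\|_G\bigr)$, not the inequality you wrote.
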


One basic connection between the geometries induced by $\|\cdot\|_G$ and $\|\cdot\|_N$on $N$ and $G$ is captured by the following fundamental (and sharp) pseudo-Poincar\'e inequalities for the 1d  symmetric probability measures 
$$\mu_{S,\bm{\alpha}}(g) \asymp \sum_{i=1}^k \sum_{a\in \mathbb{Z}} \frac{\mathbf{1}_{ s_i^a}(g)}{(1+|a|)^{1+\alpha_i}}, \qquad \bm{\alpha} = 1/\mathbf{v}$$  
and 
\begin{equation}\label{muSigma}
\mu_{S_N, \bm{\alpha}_N} \asymp \sum_{i=1}^{|S_N|} \sum_{a\in \mathbb{Z}} \frac{\mathbf{1}_{ s_i^a}(g)}{(1+|a|)^{1+\tilde{\alpha}_i}}. \qquad \bm{\alpha}_N = 1/\mathbf{v}_N
\end{equation}
where $1/\mathbf{v}$ and 
$ 1/\mathbf{v}_N$ denote the component-wise reciprocal of $\mathbf{v}$ and  $\mathbf{v}_N$ respectively.  
The second inequality is merely a restatement of Theorem \ref{thm: pp-inequality}. 

\begin{theo}[\cite{SCZ-nil,CKSCWZ1}] 
For any $h \in G$, the following pseudo-Poincar\'e inequalities hold.\begin{itemize}
\item For any $h\in G$,   $$  \sum_{g\in G}|f(gh)-f(g)|^2\le C \|h\|_G \mathcal E_{\mu_{S,\bm{\alpha}}}(f,f).$$
\item For any $n \in \langle S_N\rangle$, 
$$  \sum_{m\in N}|f(mn)-f(m)|^2\le C \|n\|_N \mathcal E_{\mu_{S_N,\bm{\alpha}_N}}(f,f).$$
\end{itemize}
\end{theo}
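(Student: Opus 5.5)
The plan is to derive both pseudo-Poincaré inequalities from a single mechanism: write a group element as a short word, telescope the increment along that word, and charge each letter's cost to the 1d-singular Dirichlet form. The second bullet is Theorem \ref{thm: pp-inequality} applied in $N$ with the tuple $S_N$ and weights $\mathbf v_N$. Two things need checking. First, every weight in $\mathbf v_N$ exceeds $1/2$. Second, the letters of $\Sigma_0$ carry weight $1/2$, and their contribution must be absorbed. For this I would note that $\langle S_N\rangle$ has finite index in the relevant subgroup, and that for $n\in\langle S_N\rangle$ the infimum defining $\|n\|_N$ can be taken over words in $S_N$ only, up to constants. I would justify this by citing the comparison of the two norms in \cite{SCZ-nil}; then Theorem \ref{thm: pp-inequality} applies verbatim. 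So the real work is the first bullet.

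For the first bullet, fix $h\in G$ with $\|h\|_G=R$. I would choose a word $\theta=\theta_1\cdots\theta_m$ over $\widetilde{S_G}$ realizing the infimum up to a constant, so that $\deg_\sigma(\theta)\le R^{w_G(\sigma)}$ for each letter $\sigma$. The naive telescoping $|f(gh)-f(g)|^2\le m\sum_j |f(g\theta_1\cdots\theta_j)-f(g\theta_1\cdots\theta_{j-1})|^2$ loses a factor $m$, which can be of size $R^{\max w}$. So, as in \cite{SCZ-nil,CKSCWZ1}, I would instead use the structured normal form: $h=\prod_{c} c^{x_c}\cdot x$, where $x$ is a coset representative, each $c$ is a commutator in the letters of $S_G$ and $S_0$, and $|x_c|\preccurlyeq R^{w(c)}$. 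This comes from Theorem \ref{thm: gamma_G} together with the nilpotent decomposition used in the proof of Proposition \ref{frakeq}. Since the number of factors is bounded, Cauchy--Schwarz costs only a constant. The factor $x$ and any bounded factors cost $\preccurlyeq \mathcal E_{\mu_{S,\bm\alpha}}$, because $S$ generates $G$ and $\mu_{S,\bm\alpha}$ charges each $s_i^{\pm1}$.

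It then remains to bound $\sum_g|f(gc^{x})-f(g)|^2$ by $R\,\mathcal E_{\mu_{S,\bm\alpha}}$ whenever $|x|\le R^{w(c)}$. For a single letter $c=s_i^{\epsilon_i}$ with $|x|\le R^{v_i}$, this is the one-dimensional estimate. Namely, $\sum_g|f(gs^{y})-f(g)|^2\preccurlyeq |y|^{\alpha}\,\mathcal E_{\mu_{s,\alpha}}(f,f)$, which holds for $\alpha<2$ by the standard pseudo-Poincaré inequality for 1d stable walks on $\mathbb Z$. Transplanted along cosets of $\langle s\rangle$, this gives the bound $R^{v_i\alpha_i}=R$. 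For letters of $S_0$ with weight $1/2$, I would write them as words in $S$ of bounded length, each appearing with multiplicity at most $R^{1/2}$. Using $\alpha_i<2$, the same 1d bound gives at most $R^{1/2}\cdot R^{1/2}=R$. For a commutator $c=[\sigma_1,[\dots,\sigma_l]]$ of weight $w=\sum w(\sigma_j)$, I would write $c^{x}$ as $[\sigma_1^{y_1},\dots,\sigma_l^{y_l}]$ times a correction, where $|y_j|\le R^{w(\sigma_j)}$. This is exactly the splitting in the proof of Theorem \ref{theo: commu-dirichlet}, with Lemma \ref{lem: decomp} and Lemma \ref{lem: floor}. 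The commutator expands into boundedly many powers $\sigma_j^{\pm y_j}$, each of which costs $\preccurlyeq R$. The correction lies in deeper terms of the series with smaller norm, and is handled by backward induction on commutator length.

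I expect the main obstacle to be this last step: controlling the correction term while working in $G$ rather than in $N$. Conjugation by coset representatives mixes $s_i^{\epsilon_i}$ with $x_js_i^{\epsilon_i}x_j^{-1}$. To bring the problem back to $N$, I would first try Lemma \ref{lem: Dirichlet-decomp} and Theorem \ref{theo: GN-Dirich}, with the $\kappa_X$ terms absorbed into $\mathcal E_{\mu_{S,\bm\alpha}}$.
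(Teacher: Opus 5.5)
Your argument for the second bullet rests on a false claim. For $n\in\langle S_N\rangle$, $\|n\|_N$ is \emph{not} comparable in general to the quasi-norm computed with words in $S_N$ alone, and nothing in \cite{SCZ-nil} says so: Theorem \ref{thm: gamma_G} compares $\|\cdot\|_N$ with $\|\cdot\|_G$, which is a different statement. A weight-$1/2$ letter of $\Sigma_0$ is expensive on its own, but commutators of such letters can be cheap. Here is an example. Let $N=H_3(\mathbb Z)=\langle a,b\rangle$ with $z=[a,b]$ central, and let $t$ be the involutive automorphism $a\mapsto a^{-1}$, $b\mapsto b^{-1}$; it fixes $z$. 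Set $G=N\rtimes\langle t\rangle$ and $S=(t,at,bt,z)$, where the first three are involutions and $z$ has weight $v\in(1/2,1)$. Then $S_N=(z,z)$, so $\langle S_N\rangle=\langle z\rangle$ has infinite index in $N$. With $\Sigma_0=(a,b)$, the identity $z^{L^2}=[a^L,b^L]$ gives $\|z^k\|_N\preccurlyeq k$, whereas the $S_N$-only quasi-norm of $z^k$ is $\asymp k^{1/v}$. In fact, the second bullet with $\|n\|_N$ read literally fails here. For $f$ supported on $\langle z\rangle$ it becomes $\sum_{c\in\mathbb Z}|\phi(c+k)-\phi(c)|^2\le Ck\,\mathcal E(\phi,\phi)$ for the $1d$ stable-like form of index $1/v>1$, which fails for a bump of width $k$. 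The paper says the bullet is ``a restatement of Theorem \ref{thm: pp-inequality}''. What that actually delivers is Theorem \ref{thm: pp-inequality} applied to the nilpotent group $\langle S_N\rangle$ with generating tuple $(S_N,\mathbf v_N)$, summed over the cosets of $\langle S_N\rangle$ in $N$. This gives the bound with $\|n\|_{S_N,\mathbf v_N}$ and needs no norm comparison at all. To keep $\|n\|_N$ you would have to add $\mathcal E_{\kappa_{\Sigma_0}}$ on the right.

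For the first bullet, which the paper only quotes from \cite{SCZ-nil,CKSCWZ1}, your normal form $h=\prod_c c^{x_c}\cdot x$ is not available. Letters of $S_0$ need not lie in $N$, and $G$ is not nilpotent, so commutator collection need not terminate. Theorem \ref{thm: gamma_G} is a norm and volume comparison, not a decomposition. Lemma \ref{lem: Dirichlet-decomp} and Theorem \ref{theo: GN-Dirich} only compare Dirichlet forms. At best they transfer back to $G$ a pseudo-Poincar\'e inequality on $N$ for the full tuple $\widetilde{S_N}$, including its weight-$1/2$ letters, and Theorem \ref{thm: pp-inequality} does not cover those letters. So the obstacle you flag is the whole difficulty, and it is left open.

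The reduction has to be done on $h$ itself. Write $h=nx$ with $x\in X$. By Theorem \ref{thm: gamma_G} and the quasi-triangle inequality, $\|n\|_N\preccurlyeq\|h\|_G=R$. Then use the bounded-generation result of \cite{SCZ-nil,CKSCWZ1}: $n$ is a product of at most $K$ factors $\sigma^y$ with $\sigma\in\widetilde{S_N}$ and $|y|\le CR^{w_N(\sigma)}$. The conjugates are then harmless. For $\sigma=x_js_i^{\epsilon_i}x_j^{-1}$, passing through $gx_j$ and $gx_js_i^{\epsilon_iy}$ costs $\preccurlyeq\mathcal E_{\kappa_S}+|y|^{\alpha_i}\mathcal E_{\mu_{S,\bm\alpha}}\preccurlyeq R\,\mathcal E_{\mu_{S,\bm\alpha}}$. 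For $\sigma\in\Sigma_0$ the cost is $\preccurlyeq y^2\mathcal E_{\kappa_S}\preccurlyeq R\,\mathcal E_{\kappa_S}$.

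If you instead try to derive bounded generation from the splitting in Theorem \ref{theo: commu-dirichlet}, note that the remainder $c^{\,x-\prod_j y_j}$ is a power of the \emph{same} commutator, not a deeper term. Backward induction on commutator length never reaches it. The paper absorbs this remainder using Theorem \ref{thm: pp-inequality}, which would be circular here. You would have to iterate the splitting on the remainder, whose exponent drops like $x\mapsto x^{1-p_*}$, and use a weighted Cauchy--Schwarz inequality so that the costs still sum to $\preccurlyeq R$.
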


A simple but instructive example is as follows. Although elementary, it highlights how the choice of $S$ and its interaction with $N$ affects the random walk behaviors, and motivates the construction above. 
\begin{exa}
Let $G = \mathbf{D} =  \langle u, v : u^2 = v^2\rangle$ be the infinite Dihedral group. It is not nilpotent because $[(uv)^n, u] = (uv)^{2n}$ and contains $N:= \langle uv \rangle \simeq \mathbb{Z}$ as the nilpotent subgroup with quotient $\langle u\rangle\simeq \mathbb{Z}_2$. 
Take $S = (u, v)$ and $\bm{\alpha} = (\alpha_1, \alpha_2) \in (0,2)^2$.  Because $u,v$ is torsion, $\mu_{S,\bm{\alpha}}$ is comparable to the uniform measure $\kappa_S$ on $S$ which satisfies
$$\mu_{S,\bm{\alpha}}^{(2n)}(e) \asymp \kappa_S^{(2n)}(e)\asymp 
n^{-\frac{1}{2}}$$
as $V_G(R) = \# \{g: |g| \le R\} = R^1$. 
This justifies 
our construction of $S_N$ and our choice of 
assigning the value $\frac{1}{2}$ to $w_G$ on $S_0$ (respectively, to $w_N$ on $\Sigma_0$), since under this convention the resulting set $S_N$ is empty and the associated measure exhibits the desired convolution behavior by Theorem \ref{thm: main2}. 

On the other hand, 
suppose $S' = (uv, u,v)$ and $\bm{\alpha}' = (\alpha_1,\alpha_2,\alpha_3) \in (0,2)^3$. In this case,  $S'_N = \{uv\}$ generates $N$ and the volume exponent given in Theorem \ref{thm: gamma_G} is $\gamma = \frac{1}{\alpha_1}$. So, by Theorem \ref{thm: main2}, 
$$\mu_{S',\bm{\alpha}'}^{(2n)}(e) \asymp  n^{-\frac{1}{\alpha_1}}$$
as expected. Note that this is the opposite extreme: the components that interact nontrivially with $N$ comprise the entire measure. In this case, the specific weight assignments on $S_0$ and $\Sigma_0$ are irrelevant.

For a more complex multidimensional analogue of the dihedral group, see Example 4.7 in \cite{CKSCWZ}.
\end{exa}

\subsection{Stable-like measures on $G$}

We consider a family of symmetric probability measures 
$v$ on $G$, defined in the same manner as those introduced in Section \ref{sec: convex}. Specifically, 
$$\nu=\sum _i^\ell c_i\nu_i$$
is a convex combination of probability measures, where each
$\nu_i$ is supported on a subgroup of $G$ and and the union of these supports generates $G$. Furthermore, for each $i\in \{1,\dots,\ell\}$, one of the following two possibility occurs:
\begin{enumerate}
    \item There is subgroup $H_i$ generated by a tuple $S_i=(s_{i,1},\dots,s_{i,k_i}) $ and there is a weight tuple $\mathbf v_i=(v_{i,1},\dots,v_{i,k_i})\in (1/2,+\infty)^{k_i}$ such that $\nu_i\asymp \rho_{G, S_i,\mathbf v_i}\asymp (1+\|\cdot\|_{G, S_i, \mathbf v_i})^{-1-\gamma_i}$ where 
    $\#\{\|g\|_{G, S_i,\mathbf v_i}\le r\}\asymp r^{\gamma_i}$.  In this case, we define the tuple $\bm{\alpha}_i=(\alpha_{i,1},\dots,\alpha_{i,k_i})$ by setting $\alpha_{i,j}=1/v_{i,j}$, $1\le j\le k_i$.
    \item There is a subgroup $H_i$ generated by a tuple $S_i=(s_{i,1},\dots,s_{i,k_i}) $ and there is an exponent tuple $\bm{\alpha}_i=(\alpha_{i,1},\dots,\alpha_{i,k_i})\in (0,2)^{k_i}$ such
    $\nu_i \asymp \nu_{S_i,\bm {\alpha}_i}$ of type (\ref{cwsl}). In this case, we also define a weight tuple $\mathbf v_i$ by setting $v_{i,j}=1/\alpha_{i,j}$, $1\le j\le k_i$.    \end{enumerate}

Again, we construct a tuple $S=S_\nu$ of group elements, a weight tuple $\mathbf v=\mathbf v_\nu$ and an exponent tuple $\bm{\alpha} = \bm{\alpha}_\nu$, all of the same length $k=\sum_1^\ell k_i$, by concatenation of the tuples $S_i$, $\mathbf v_i$, and $\bm{\alpha}_i$ associated to $\mu_i$ on $H_i$, $1\le i\le \ell$. Consider the associated radial measure on $G$ \begin{equation*}
\rho_{G, S,\mathbf v}\asymp(1+\|\cdot\|_{G, S,\mathbf v})^{-1-\gamma}
 \end{equation*}
where the parameter $\gamma$ is given by Theorem \ref{thm: gamma_G} and the associated 1d-singular probability measure $\mu_{S, \bm{\alpha}}$ on $G$ defined as in (\ref{muS}). 

\begin{theo}
\label{thm: main2}
    Given a symmetric stable-like probability measure $\nu$ on $G$, we have 
    $$\mathcal{E}_{\nu} \asymp \mathcal{E}_{\rho_{G, S,\mathbf v}} \asymp \mathcal{E}_{\mu_{S, \bm{\alpha}}}$$
    and all three measures satisfy comparable $n$-step convolution estimate, i.e. 
    $$\phi^{(2n)}(e) \asymp n^{-\gamma}, \qquad \phi \in \{\nu, \rho_{G, S,\mathbf v}, \mu_{S, \bm{\alpha}}\}$$
Furthermore, let $\tilde{\nu}$ be another stable-like probability measure in the collection, and form the tuple
$(\tilde{S}, \tilde{\mathbf{v}})$ by concatenating the tuples arising from its components, as before. Then 
$$\|\cdot\|_{G,S,\mathbf{v}}\asymp \|\cdot \|_{G,\tilde{S}, \tilde{\mathbf{v}}} \Longleftrightarrow \|\cdot\|_{N,S,\mathbf{v}}\asymp \|\cdot \|_{N,\tilde{S}, \tilde{\mathbf{v}}}  $$
Let $(S_N,\mathbf{v}_N)$ and $(\tilde{S}_N,\tilde{\mathbf{v}}_N)$ be the tuples associated with $(S,\mathbf{v})$ and $(\tilde{S},\tilde{\mathbf{v}})$, respectively, representing the components that interact nontrivially with the nilpotent subgroup $N$, as defined in Definition~\ref{def: norm_N}. The two conditions above are further equivalent to the following.
\begin{enumerate}
        \item $\mathfrak w_{S_N,\mathbf{v}_N}(s)=\mathfrak w_{\tilde{S}_N,\tilde{\mathbf{v}}_N}(s)$ for all $
s \in \mbox{\em core}(S_N,\mathbf{v}_N) \cup \mbox{\em core}(\tilde{S}_N,\tilde{\mathbf{v}}_N)$ 
    \item $\mathfrak w_{S_N,\mathbf{v}_N}(c)=\mathfrak w_{\tilde{S}_N,\tilde{\mathbf{v}}_N}(c)$ for all $
 c\in\mathcal{C}(\mbox{\em core}(S_N,\mathbf{v}_N)) \cup \mathcal{C}(\mbox{\em core}(\tilde{S}_N,\tilde{\mathbf{v}}_N))$ 
\item  $\mbox{\em Red}(\mathcal{N}_{S_N,\mathbf{v}_N}) = \mbox{\em Red}(\mathcal{N}_{\tilde{S}_N,\tilde{\mathbf{v}}_N})$  and $\mathbf{w}_{S_N,\mathbf{v}_N} = \mathbf{w}_{\tilde{S}_N,\tilde{\mathbf{v}}_N} $
\end{enumerate}
 Furthermore, if any of the equivalent conditions above holds, then  $\mathcal E_{\tilde{\nu}}\asymp\mathcal E_\nu$.  
\end{theo}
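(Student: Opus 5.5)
The plan is to reduce everything on $G$ to the nilpotent subgroup $N$ using Lemma \ref{lem: Dirichlet-decomp} and Theorem \ref{theo: GN-Dirich}, and then to invoke the nilpotent results, Theorems \ref{thm: main1}, \ref{thm: rho_mu} and \ref{thm: coord}.

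\textbf{Step 1: reduce $\nu$ and $\mu_{S,\bm\alpha}$ to $N$.} Since $\mathcal E_\nu=\sum c_i\mathcal E_{\nu_i}$ and $\mathcal E_{\mu_{S,\bm\alpha}}\asymp\sum_i\mathcal E_{\mu_{S_i,\bm\alpha_i}}$, it suffices to treat one component at a time, and to check along the way that $\mathcal E_{\kappa_X}+\mathcal E_{\kappa_Y}\preccurlyeq$ each of the three forms. This domination holds because every finite set in $G$ has bounded $\|\cdot\|_G$-norm, so the pseudo-Poincar\'e inequality for $\mu_{S,\bm\alpha}$ on $G$ bounds $\mathcal E_{\kappa_X}$ and $\mathcal E_{\kappa_Y}$ by $\mathcal E_{\mu_{S,\bm\alpha}}$. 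For $\nu$ and $\rho$ themselves, I would use that both charge a generating set of $G$ with positive mass, together with the standard word-length comparison.

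For each $s_i$ of infinite order, split $\langle s_i\rangle=\bigsqcup_{0\le r<\epsilon_i}s_i^r\langle s_i^{\epsilon_i}\rangle$. The one-dimensional piece $\sum_a(1+|a|)^{-1-\alpha_i}\mathbf 1_{s_i^a}$ then satisfies the hypothesis of Theorem \ref{theo: GN-Dirich} with $Y=\{s_i^r\}$, because the weights of $s_i^{r+\epsilon_i b}$ and $s_i^{\epsilon_i b}$ are comparable. For torsion $s_i$ the piece is comparable to a uniform measure on a finite set, hence is absorbed by $\kappa_Y$. Symmetrizing under conjugation by $X$ gives exactly $\mu_{S_N,\bm\alpha_N}$ of (\ref{muSigma}); the $\Sigma_0$ letters of weight $1/2$ contribute at most $\mathcal E_{\kappa}$-type terms. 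Consequently
$$\mathcal E_{\mu_{S,\bm\alpha}}\asymp \mathcal E_{\kappa_X}+\mathcal E_{\kappa_Y}+\mathcal E_{\mu_{S_N,\bm\alpha_N},N}(f|_N,f|_N),$$
where, as in Lemma \ref{lem: Dirichlet-decomp}, the $N$-form is summed over all cosets. The same argument with $\Gamma=G$ and $Y=X$ applies to $\rho_{G,S,\mathbf v}$, using $\|ny\|_G\asymp\|n\|_G$ and $\|\cdot\|_G\asymp\|\cdot\|_N$ on $N$ (Theorem \ref{thm: gamma_G}). It yields the radial measure $\rho_{S_N,\mathbf v_N}$ on $N$, which is already conjugation-invariant up to constants.

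\textbf{Step 2: compare on $N$.} Theorem \ref{thm: rho_mu} applied to $(S_N,\mathbf v_N)$ gives $\mathcal E_{\rho_{S_N,\mathbf v_N}}\asymp\mathcal E_{\mu_{S_N,\bm\alpha_N}}$ on $N$, and the decomposition of Step 1 lifts this to $\mathcal E_{\rho_G}\asymp\mathcal E_{\mu_{S,\bm\alpha}}$.

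For coordinate-wise components $\nu_{S_i,\bm\alpha_i}$ on $G$, I would mimic the proof of Theorem \ref{thm: coord} directly on $G$; it only uses Lemmas \ref{lem: decomp} and \ref{lem: sum} and algebra valid in any group. This gives $\mathcal E_{\nu_i}\asymp\mathcal E_{\mu_{S_i,\bm\alpha_i}}$. For radial components on subgroups $H_i$, I would run the Step 1 argument inside $H_i$ with $N\cap H_i$. Summing over components yields $\mathcal E_\nu\asymp\mathcal E_{\mu_{S,\bm\alpha}}$. The return probability estimate $\phi^{(2n)}(e)\asymp n^{-\gamma}$ then follows as in Corollary \ref{lem: return_prob}: the estimate for $\rho_G$ comes from \cite{SCZ-nil}, and it transfers to the other two measures via the comparison result of \cite{PSCstab}.

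\textbf{Step 3: the norm equivalences.} On $N$, Theorem \ref{thm: gamma_G} gives $\|\cdot\|_G\asymp\|\cdot\|_N$. Every $g\in G$ can be written $g=nx$ with $x\in X$ finite, and the quasi-triangle inequality gives $\|nx\|_G\asymp\|n\|_G$ up to an additive constant, which is harmless after adjusting near $e$. Hence $\|\cdot\|_{G,S,\mathbf v}\asymp\|\cdot\|_{G,\tilde S,\tilde{\mathbf v}}$ holds if and only if the two $N$-norms are comparable on $N$. Proposition \ref{frakeq}, Lemma \ref{lem: commutators_wt} and Theorem \ref{thm: wt_series}, applied to $(S_N,\mathbf v_N)$ and $(\tilde S_N,\tilde{\mathbf v}_N)$, then give conditions 1–3. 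One caveat: the $\Sigma_0$ letters of weight $1/2$ are never core unless forced, and I would check this so that the core conditions stay as stated. Finally, comparable norms give comparable radial measures with the same $\gamma$, hence $\mathcal E_{\tilde\nu}\asymp\mathcal E_{\rho_G}\asymp\mathcal E_{\tilde\rho_G}\asymp\mathcal E_\nu$.

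\textbf{Main obstacle.} The hardest step is Step 1, namely checking the conjugation-invariance hypothesis $\nu_N(n)\asymp\nu_N(xnx^{-1})$ for the $1d$-singular measures. Conjugation by $x_j$ moves $s_i^{\epsilon_i a}$ onto a \emph{different} letter of $S_N$, so invariance holds only because $S_N$ was built as the full union of conjugates with equal weights. A second delicate point is that the added $\Sigma_0$ letters of weight $1/2$ must be shown to be dominated by $\mathcal E_{\kappa_X}$. I would try to obtain this by expressing each $\sigma\in\Sigma_0$ as a bounded-length word in $S_0$, which controls its contribution by $\mathcal E_{\kappa}$ terms.
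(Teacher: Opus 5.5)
Your route is the paper's own. You reduce the $G$-forms to $N$-forms plus finite-range terms via Theorem~\ref{theo: GN-Dirich} and Lemma~\ref{lem: GN_comp}, compare on $N$ with Theorem~\ref{thm: rho_mu}, and rerun Theorem~\ref{thm: coord} on $G$ for coordinate-wise components. You then transfer return probabilities as in Corollary~\ref{lem: return_prob} and read the norm criteria off Theorem~\ref{thm: gamma_G} and the nilpotent results.

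However, Steps 1--2 have a genuine gap for the radial measure. The paper's proof has the same gap: it invokes Theorem~\ref{thm: rho_mu} for $\mathcal E_{\mu_{S_N,\bm\alpha_N}}\asymp\mathcal E_{\rho_{N,S,\mathbf v}}$ in the same way. What Step~1 extracts from $\rho_{G,S,\mathbf v}$ is not $\rho_{S_N,\mathbf v_N}$. It is $(1+\|\cdot\|_{N,S,\mathbf v})^{-1-\gamma}$, whose quasi-norm is built from $\Sigma_0\sqcup S_N$ with weight $1/2$ on $\Sigma_0$, and $S_N$ need not generate $N$. Theorem~\ref{thm: rho_mu} requires a generating tuple with all weights $>1/2$.

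The substitution is harmless when $\langle S_N\rangle$ has finite index in $N$. Equivalently, no $\Sigma_0$ letter is core: the second term of the weighted series of $\Sigma_0\sqcup S_N$ is $\langle S_N\rangle[N,N]$, which has finite index iff $\langle S_N\rangle$ does. In that case only commutators of $S_N$ letters carry unbounded exponents. If instead a weight-$1/2$ letter $\sigma$ is core, the argument of Theorem~\ref{thm: rho_mu} produces $\sum_{a\neq 0}|a|^{-3}\sum_g|f(g\sigma^a)-f(g)|^2$. This is the $\alpha=2$ form, and it exceeds $\mathcal E_{\kappa_\sigma}$ by a logarithmic factor.

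Your remark that $\Sigma_0$ letters only contribute $\mathcal E_\kappa$-type terms is right on the $\mu$-side, but not on the $\rho$-side. On the $\mu$-side they are dominated by $\mathcal E_{\kappa_{S_0}}$ via bounded words. They are not dominated by $\mathcal E_{\kappa_X}$, which vanishes when $G=N$.

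This cannot be patched, because the conclusion itself fails in that case. Take the paper's dihedral example $G=\langle u,v : u^2=v^2=e\rangle$, $S=(u,v)$, $t=uv$. Then $S_N=\emptyset$, $\|g\|_G\asymp|g|^2$ (word length) and $\gamma=1/2$, so $\rho_{G,S,\mathbf v}(g)\asymp(1+|g|)^{-3}$, while $\mathcal E_{\mu_{S,\bm\alpha}}\asymp\mathcal E_{\kappa_S}$. Let $f(t^n)=f(t^nu)=\phi(n/L)$ with $\phi$ a smooth bump. Then $\mathcal E_{\kappa_S}(f,f)\asymp L^{-1}$, whereas $\mathcal E_{\rho_{G,S,\mathbf v}}(f,f)\asymp L^{-1}\log L$. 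Correspondingly, $\rho_{G,S,\mathbf v}^{(2n)}(e)\asymp(n\log n)^{-1/2}$, not $n^{-1/2}$.

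So your argument, like the paper's, proves Theorem~\ref{thm: main2} only under an extra hypothesis: $\langle S_N\rangle$ has finite index in $N$, and likewise for each radial component inside its $H_i$. Once both $\nu$ and $\tilde\nu$ satisfy it, the caveat in your Step~3 is settled too. No $\Sigma_0$ letter is then core, so conditions 1--3 can legitimately be stated in terms of $(S_N,\mathbf v_N)$.
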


The proof of Theorem \ref{thm: main2} relies heavily on the following application of Theorem \ref{theo: GN-Dirich}, which relates probability measures on $G$ to their counterparts on the nilpotent group $N$. 
\begin{lem}
\label{lem: GN_comp}
\begin{enumerate}[(i)]
    \item  Let $\kappa_S$  be the uniform measure on $S\cup S^{-1}$. Let $\mu_{S_N,\mathbf{v}_N}$ be the symmetric probability measure on $N$ defined as in (\ref{muSigma}). 
    We have    $$\mathcal{E}_{\mu_{S, \bm{\alpha}}}(f,f) \asymp \mathcal{E}_{\mu_{S_N,\mathbf{v}_N}}(f|_N, f|_N) + \mathcal{E}_{\kappa_S}(f,f)  $$
    \item  Let $\kappa_X$ be the uniform measure on $X$. Define a measure $\rho_N$ on $N$ by
    $$\rho_{N,S,\mathbf{v}} (n) =  \|n\|_{N,S, \mathbf{v}}^{-(1+\gamma)}$$
    where $\gamma$ is the same exponent that appears in $\rho_{G,S,\mathbf{v}}$. 
    Then   $$\mathcal{E}_{\rho_{G,S,\mathbf{v}}}(f,f)\asymp \mathcal{E}_{\rho_{N,S,\mathbf{v}}}(f|_N,f|_N) + \mathcal{E}_{\kappa_X}(f,f)$$ 
\end{enumerate}
\end{lem}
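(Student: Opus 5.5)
Both parts will be obtained by applying Theorem \ref{theo: GN-Dirich} with a suitable choice of $\nu_G$, $Y$ and $\nu_N$. We then absorb the extra finitely supported terms $\mathcal{E}_{\kappa_X}$, $\mathcal{E}_{\kappa_Y}$ into the main forms, using the standard comparison $\mathcal{E}_{\kappa_F}\preccurlyeq \mathcal{E}_\phi$. This comparison holds whenever $F$ is a finite set and the symmetric measure $\phi$ generates $G$ (write each element of $F$ as a word in $\operatorname{supp}\phi$, expand $f(gx)-f(g)$ telescopically, and use Lemma \ref{lem: decomp}).

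\textbf{Part (i).} Write $\mu_{S,\bm\alpha}=\frac1k\sum_i\mu_i$, where $\mu_i$ lives on $\langle s_i\rangle$.

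If $s_i$ is torsion, $\mu_i$ is finitely supported and $\mathcal{E}_{\mu_i}\asymp\mathcal{E}_{\kappa_{\{s_i\}}}$.

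If $s_i$ has infinite order, set $\Gamma_i=\langle s_i\rangle$ and $Y_i=\{e,s_i,\dots,s_i^{\epsilon_i-1}\}$. Then $\Gamma_i=(\Gamma_i\cap N)Y_i=\langle s_i^{\epsilon_i}\rangle Y_i$, and $\mu_i(s_i^{\epsilon_i b+r})\asymp \mu_i(s_i^{\epsilon_i b})$ since $(1+|a|)^{-1-\alpha_i}$ varies by bounded factors over $|r|<\epsilon_i$. Take $\nu_N$ to be the $1d$ measure with exponent $\alpha_i$ on the tuple $(x_j s_i^{\epsilon_i}x_j^{-1})_{j=0}^p$. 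Conjugation by $x\in X$ permutes the conjugates $x_js_i^{\epsilon_i}x_j^{-1}$ up to multiplication by elements of $N$, so the hypothesis $\nu_N(n)\asymp\nu_N(xnx^{-1})$ is where care is needed. To secure it, I would first replace the coset representatives by the full finite set of conjugates $\{g s_i^{\epsilon_i}g^{-1}: g\in G\}$. This set is finite: it is the $G$-orbit, and $N$ acts with finite orbits on the relevant element because $N$ is nilpotent. Hence the orbit is only defined up to a finite-index modification, and the equivalence still holds after an application of Theorem \ref{theo: commu-dirichlet}/pseudo-Poincar\'e, which shows that conjugating by $m\in N$ changes the $1d$ form only by a controlled amount.

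Theorem \ref{theo: GN-Dirich} then gives, for each such $i$,
$$\mathcal{E}_{\mu_i}+\mathcal{E}_{\kappa_X}+\mathcal{E}_{\kappa_{Y_i}}\asymp \mathcal{E}_{\kappa_X}+\mathcal{E}_{\kappa_{Y_i}}+\mathcal{E}_{\nu_N^{(i)},N}(f|_N,f|_N).$$
Summing over $i$ assembles $\mathcal{E}_{\mu_{S_N,\bm\alpha_N}}$ on the right, because the concatenated tuple is $S_N$. The $\Sigma_0$ part with weight $1/2$ is a finite-range term and is absorbed in $\mathcal{E}_{\kappa_X}+\mathcal{E}_{\kappa_S}$ through the restriction argument of Lemma \ref{lem: Dirichlet-decomp}. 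Finally, $\mathcal{E}_{\kappa_X}$ and all $\mathcal{E}_{\kappa_{Y_i}}$ are dominated by $\mathcal{E}_{\kappa_S}$ because $S$ generates $G$. Conversely, $\mathcal{E}_{\kappa_S}\preccurlyeq\mathcal{E}_{\mu_{S,\bm\alpha}}$ trivially, since each $s_i$ carries positive mass. This yields (i).

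\textbf{Part (ii).} Take $\nu_G=\rho_{G,S,\mathbf v}$, $\Gamma=G$, $Y=X$, and $\nu_N=\rho_{N,S,\mathbf v}$. The required estimates $\nu_G(nx)\asymp\nu_G(n)$ and $\nu_N\asymp\nu_G$ on $N$ follow from the quasi-triangle inequality for $\|\cdot\|_G$ with $\|x\|_G\le C$ and from Theorem \ref{thm: gamma_G} ($\|\cdot\|_N\asymp\|\cdot\|_G$ on $N$). Conjugation invariance $\|xnx^{-1}\|_N\asymp\|n\|_N$ comes from the fact that $\widetilde{S_N}$ is closed under conjugation by the $x_j$ up to elements of $N$, with the same weights; alternatively it can be read off from $\|\cdot\|_G$ together with $\|xnx^{-1}\|_G\le C(\|n\|_G+1)$. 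Theorem \ref{theo: GN-Dirich} gives
$$\mathcal{E}_{\rho_G}+\mathcal{E}_{\kappa_X}\asymp\mathcal{E}_{\kappa_X}+\mathcal{E}_{\rho_N,N}.$$
Since $\rho_G$ charges every element of $X$ with mass bounded below, $\mathcal{E}_{\kappa_X}\preccurlyeq\mathcal{E}_{\rho_G}$, which gives (ii).

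\textbf{Main obstacle.} I expect the main difficulty to be verifying the conjugation-invariance hypothesis for $\nu_N$ in part (i). In part (ii) the same hypothesis is harmless, since it follows directly from the quasi-norm estimates above.
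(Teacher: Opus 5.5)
Your route is the paper's. In (i) you apply Theorem~\ref{theo: GN-Dirich} to each non-torsion component with $\Gamma=\langle s_i\rangle$, $Y_i=\{e,s_i,\dots,s_i^{\epsilon_i-1}\}$ and $\nu_N$ the $1d$ measure on $(x_js_i^{\epsilon_i}x_j^{-1})_{j=0}^p$, and you absorb torsion generators and all $\kappa_X,\kappa_{Y_i}$ terms into $\mathcal{E}_{\kappa_S}$. In (ii) you take $\Gamma=G$, $Y=X$, $\nu_N=\rho_{N,S,\mathbf v}$. Your check via $\|xnx^{-1}\|_G\le C(\|n\|_G+1)$ and $\|\cdot\|_N\asymp\|\cdot\|_G$ on $N$ is the paper's chain $\rho_N(n)\asymp\rho_G(nx)\asymp\rho_G(xn)\asymp\rho_N(xnx^{-1})$. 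Part (ii) is correct. Rely on that argument rather than on $\widetilde{S_N}$ being ``closed under conjugation up to elements of $N$'', which does not by itself control the degrees of a conjugated word.

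The problem is the step you call the main obstacle in (i): the repair you propose is wrong.
\begin{enumerate}
\item The set $\{gs_i^{\epsilon_i}g^{-1}:g\in G\}$ is in general infinite, because nilpotency does not make conjugacy classes finite. In the discrete Heisenberg group, the elements $M_{23}^kM_{12}M_{23}^{-k}=M_{12}M_{13}^{-k}$, $k\in\mathbb Z$, are pairwise distinct.
\item Invoking Theorem~\ref{theo: commu-dirichlet} or pseudo-Poincar\'e to say that conjugation by $m\in N$ changes the $1d$ form ``only by a controlled amount'' is not an argument, let alone one uniform in infinitely many $m$.
\end{enumerate}

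Fortunately nothing needs repairing. Theorem~\ref{theo: GN-Dirich} asks for $\nu_N(n)\asymp\nu_N(xnx^{-1})$ only for $n\in\Gamma\cap N=\langle t\rangle$, $t=s_i^{\epsilon_i}$, and $x\in X$. For $n=t^b$ and $x=x_j$, the element $xnx^{-1}=t_j^b$ with $t_j=x_jtx_j^{-1}$ is the $b$-th power of a letter of your tuple. It carries weight $\asymp(1+|b|)^{-1-\alpha_i}$, exactly as $n$ does. Coincidences $t_j^b=t_l^a$ are harmless: they make $t^a$ and $t^b$ conjugate, which for $t$ of infinite order in a virtually nilpotent group forces $|a|=|b|$. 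Alternatively, work directly with the form $\sum_j\sum_a\sum_{m\in N}|f(mt_j^a)-f(m)|^2(1+|a|)^{-1-\alpha_i}$.

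The invariance you had in mind, under conjugation of arbitrary support points $t_j^b$ by $x\in X$, is what Lemma~\ref{lem: Dirichlet-decomp} requires. It can genuinely fail here, since $xt_jx^{-1}$ is only an $N$-conjugate of some $t_l$. If that is the source of your worry (the written proof of Theorem~\ref{theo: GN-Dirich} passes through $\mathcal{E}_{\nu_N,G}$), bypass it. For $g=mx$ and $n\in\Gamma\cap N$, write $mxn=m(xnx^{-1})x$ and split $f(mxn)-f(mx)$ into three increments:
\begin{enumerate}
\item $m(xnx^{-1})\to m(xnx^{-1})x$;
\item $m\to m(xnx^{-1})$;
\item $mx\to m$.
\end{enumerate}
Only the restricted invariance is then used. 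Delete the orbit construction, and drop the remark about a $\Sigma_0$ part, since (\ref{muSigma}) is built on $S_N$ alone. With those changes your proof of (i) is complete and coincides with the paper's.
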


\begin{proof} 
(i) This is proved by considering the different components of the measure     
$$\mu_{S,\bm{\alpha}}(g) \asymp \sum_{i=1}^k \sum_{a\in \mathbb{Z}} \frac{\mathbf{1}_{ s_i^a}(g)}{(1+|a|)^{1+\alpha_i}}$$
separately. For each $i\in\{1,\dots, k\}$, the component associated with $s_i,\alpha_i$ is responsible for several components of the measure $\mu_{S_N,\bm{\alpha}_N}$, namely, those associated with
$xs_i^{\epsilon_i}x^{-1}$, $x\in X$. This is precisely compatible with Theorem \ref{theo: GN-Dirich}. When applying this theorem, the Dirichlet forms associated with $Y$ and $X$ are controlled using $\mathcal E_{\kappa_S}$. Here $\Gamma=\langle s_i\rangle$ and $\Gamma\cap N=\langle s_i^{\epsilon_i}\rangle$ so that $Y=\{e,s_i,\dots,s_i^{\epsilon_i-1}\}$. Only those $s_i$ for which $\langle s_i\rangle\cap N$ is infinite need to be treated in this manner. The other are torsion and their contributions are absorbed by $\mathcal E_{\kappa_S}$.\\\\
(ii)  
Note that for $n \in N$, $\rho_{G,S,\mathbf{v}}(n) = \rho_{N,S,\mathbf{v}}(n)$ and 
    $$\rho_{N,S,\mathbf{v}} (n) \asymp \rho_{G,S,\mathbf{v}}(nx) \asymp \rho_{G,S,\mathbf{v}}(xn) = \rho_{G,S,\mathbf{v}}( (xnx^{-1} )x) \asymp \rho_{N,S,\mathbf{v}}(xnx^{-1}) $$
    The result follows from Theorem \ref{theo: GN-Dirich}. 
\end{proof}

 \begin{proof} (of Theorem \ref{thm: main2})
    By Theorem \ref{thm: rho_mu}, $\mathcal{E}_{\mu_{S_N,\bm{\alpha}_N}} \asymp \mathcal{E}_{\rho_{N,S,\mathbf{v}}}$ and by Lemma \ref{lem: GN_comp}, $\mathcal{E}_{\rho_\nu} \asymp \mathcal{E}_{\mu_{\nu}}$.  To prove the first comparison $\mathcal{E}_{\nu} \asymp \mathcal{E}_{\rho_{S,\mathbf{v}}}$, we'll show each summand $\mu_{S_i, \bm{\alpha}_i}$ in $\mu_{S,\bm{\alpha}}$ has Dirichlet form comparable to that of $\nu_i$. 
     Note that Theorem \ref{thm: coord}, although proved in the nilpotent setting, also extends to groups of polynomial growth and yields
    $\mathcal{E}_{\nu_i} \asymp \mathcal{E}_{\mu_{S_i, \bm{\alpha}_i}}$ if 
    $\nu_i \asymp \nu_{S_i,\bm{\alpha}_i}$. If $\nu_i \asymp \rho_{G, S_i,\mathbf{v}_i}$, again by Theorem \ref{thm: rho_mu} and Lemma \ref{lem: GN_comp},  the comparison $\mathcal{E}_{\nu_i} \asymp \mathcal{E}_{\mu_{S_i, \bm{\alpha}_i}}$ still holds, completing the proof. 
    The proof of the comparison of return probabilities is identical to the one in Lemma \ref{lem: return_prob}. 
All results giving criteria for when  $\nu$ and $\tilde{\nu}$ have comparable Dirichlet forms follow from the norm comparison in Theorem~\ref{thm: gamma_G} and Theorem~\ref{thm: main1}.
\end{proof}

\section{Constructing measures from descending series}

In light of Theorem \ref{thm: wt_series} and Theorem $\ref{thm: main1}$, in this section, we now pivot to an algebraic framework for constructing measures with comparable Dirichlet forms from a given descending series. 
Let $N$ be a nilpotent group. 

Suppose $U = \{\frac{1}{2} < u_1 < \ldots <  u_{j_* + 1} < \ldots \}$ is a semigroup under addition and  
\begin{align}
    \label{se: wt_series_plain}
     \mathcal{I}_N = \Big\{ I_{u_1} \supset \ldots \supset I_{u_m}   \supset   
 Tor(N) =  N_{u_{m+1}}   \supseteq    \{e\} =   N_{u_{j_* + 2}} = \ldots  \Big\}
\end{align}
is a central series indexed by $U$ such that 
\begin{itemize}
    \item the additive filtration property holds, i.e. $[I_{u_i}, I_{u_j}]\subseteq I_{u_i + u_j} $
    \item $I_{u_1}$ is a finite-indexed subgroup of $N$
    \item for $i=1,\ldots, m$, $I_{u_i}/I_{u_{i+1}}$ is nontrivial and torsion-free. 
\end{itemize}
Define 
$$\mathfrak{u} := \mathfrak{u}_{\mathcal{I}_N}: 
N \to \mathbb{R}, \quad \mathfrak{u}(s) =  
\begin{cases}
\infty & \text{ if }s \in \mbox{Tor}(N)\\ 
   \max_{i=1,\ldots,m}\{u_i\;:\;s \in I_{u_i}\} &\text{ otherwise}
\end{cases}
$$ 

\begin{rem}
In the previous section, all the constructions are built upon a tuple $(S,\mathbf{v})$. Now we instead fix an arbitrary descending series $\mathcal{I}$ satisfying precisely the same properties, listed in Remark \ref{rem: reduced_se}), as the reduced series $\mbox{Red}(\mathcal{N}_{S,\mathbf{v}})$
arising from such tuples. Equivalently, we can start from a descending series $\mathcal{N}$  of $N$, indexed by a semigroup of weights, that satisfies the 
additive filtration property and then apply the reduce series construction in Section \ref{sec: reduced} to obtain $\mathcal{I}$. Furthermore, in light of Remark \ref{rem: reduced_wt}, if $\mathcal{I} = \mbox{Red}(\mathcal{N}_{S,\mathbf{v}})$ for some tuple $(S,\mathbf{v})$,  $\mathfrak{u}_\mathcal{N}$ coincides with $\mathfrak{w}_{S,\mathbf{v}}$. 
\end{rem}

For $i=1,\ldots, m$, define 
$$C_{u_i} = \langle [I_{u_{n}},  I_{u_{m}}]\;:\;  u_{n} + u_{m} =  u_{i}, n,m <  i\rangle \subseteq I_{u_{i}}$$
and let $\Xi_{u_i} \subset I_{u_i}$ be the subset whose image forms a generating set of 
$$ \frac{I_{u_i}/I_{u_{i+1}}}{(C_{u_i} I_{u_{i+1}} )/I_{u_{i+1}}}$$
By construction, we have 
$$\left\langle [\sigma_{i_1}, \ldots, \sigma_{i_l}]\;:\; l \ge 1, \sigma_{i_\bullet} \in \Xi_{u_{i_\bullet}}, \sum_{j=1}^l u_{i_j} = u_i\right \rangle \equiv I_{u_i} \text{ mod }I_{u_{i+1}}$$
and furthermore, 
$$\left\langle [\sigma_{i_1}, \ldots, \sigma_{i_l}]\;:\; l \ge 1, \sigma_{i_\bullet} \in \Xi_{u_{i_\bullet}}, \sum_{j=1}^l u_{i_j} \ge  u_i\right \rangle =  I_{u_i} $$

\begin{theo}

\label{thm: Xi}
Let $\Xi_0 = (\xi_1,\ldots, \xi_n)$ be a set of $N$
such that 
$$\Xi := \left(\bigsqcup_{i=1}^{m} \Xi_{u_i}\right) \sqcup \Xi_0$$
generates $N$, and  for $i=1,\ldots, n$, take $a_i > \frac{1}{2}$ with $a_i \le \mathfrak{u}(\xi_i)$. Construct 
\begin{align*}
 \mathbf{u} &:= \left(\bigsqcup_{i=1}^{m} \underbrace{(u_i,\ldots, u_i)}_{|\Xi_{u_i}|-times}  \right) \sqcup (a_1,\ldots,a_n) 
\end{align*}
If $(S,\mathbf{v})$ 
is a tuple with $S$ generating $N$ and  $\mbox{Red}(\mathcal{N}_{S,\mathbf{v}}) = \mathcal{I}$, then 
$$\mathcal{E}_{\mu_{S,\mathbf{v}}} =  \mathcal{E}_{\mu_{\Xi, \mathbf{u}}}$$
\end{theo}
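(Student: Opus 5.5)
The statement should be read as $\mathcal{E}_{\mu_{S,\bm{\alpha}}}\asymp\mathcal{E}_{\mu_{\Xi,1/\mathbf{u}}}$, with exponent tuples obtained as reciprocals of the weights. The plan is to reduce everything to the norm criterion of Theorem \ref{thm: main1}. We already know $\mathcal{E}_{\mu_{S,\bm\alpha}}\asymp\mathcal{E}_{\rho_{S,\mathbf v}}$ and $\mathcal{E}_{\mu_{\Xi,1/\mathbf u}}\asymp \mathcal{E}_{\rho_{\Xi,\mathbf u}}$ by Theorem \ref{thm: rho_mu}. It therefore suffices to check condition (4) of Theorem \ref{thm: main1} for the pairs $(S,\mathbf v)$ and $(\Xi,\mathbf u)$. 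Concretely, we must show $\mbox{Red}(\mathcal N_{\Xi,\mathbf u})=\mathcal I$ and $\mathbf w_{\Xi,\mathbf u}=(u_1,\dots,u_m)$. Equivalently, by Theorem \ref{thm: wt_series} and Remark \ref{rem: reduced_wt}, we must show $\mathfrak w_{\Xi,\mathbf u}=\mathfrak u_{\mathcal I}$ on the relevant cores. The hypothesis $\mbox{Red}(\mathcal N_{S,\mathbf v})=\mathcal I$ already gives $\mathfrak w_{S,\mathbf v}=\mathfrak u$.

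\emph{Step 1: the weighted series of $(\Xi,\mathbf u)$ sits inside $\mathcal I$ up to isolators.} Consider a formal commutator $c$ in letters of $\Xi$ with $w(c)\ge u_i$. Each letter $\sigma\in\Xi_{u_j}$ lies in $I_{u_j}$ and has weight $u_j$. Each $\xi_l\in\Xi_0$ lies in $I(\cdot)$ of the term indexed by $\mathfrak u(\xi_l)\ge a_l$. The additive filtration property of $\mathcal I$ then shows that $c$ lies in the term of $\mathcal I$ indexed by the sum of the $\mathfrak u$-values of its letters, and this sum is at least $w(c)$. This holds provided that term is not past $\mathcal I$; otherwise $c$ is torsion. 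Hence $N^{\Xi,\mathbf u}_{w}\subseteq I_{u_i}$ whenever $w\ge u_i$, and the analogous inclusion holds for the isolators. This yields $\mathfrak w_{\Xi,\mathbf u}\le\mathfrak u$ pointwise.

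\emph{Step 2: reverse inclusion.} Here we use the displayed generation property stated just before the theorem: $I_{u_i}$ is generated by the commutators $[\sigma_{i_1},\dots,\sigma_{i_l}]$ with $\sigma_{i_\bullet}\in\Xi_{u_{i_\bullet}}$ and $\sum u_{i_j}\ge u_i$. Each such commutator has $\mathbf u$-weight exactly $\sum u_{i_j}\ge u_i$, so it belongs to $N^{\Xi,\mathbf u}_{\ge u_i}$. Hence $I_{u_i}\subseteq N^{\Xi,\mathbf u}_{u_i'}$, where $u'_i$ is the smallest weight value that is at least $u_i$. Passing to isolators and combining with Step 1 gives $I(N^{\Xi,\mathbf u}_{w})=I_{u_i}$ for all weight values $w\in(u_{i-1},u_i]$. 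It also gives $\mathfrak w_{\Xi,\mathbf u}=\mathfrak u$.

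\emph{Step 3: the weights of the reduced series.} After pruning duplicates, the reduced series is exactly $\mathcal I$ with reduced weight tuple $(u_1,\dots,u_m)$. This uses the fact that each quotient $I_{u_i}/I_{u_{i+1}}$ is nontrivial and torsion free, so no $u_i$ disappears. It also uses the fact that the last occurrence of each isolator has index exactly $u_i$, realized by the $\Xi_{u_i}$ letters or by commutators of weight $u_i$. Theorem \ref{thm: main1} then gives $\|\cdot\|_{S,\mathbf v}\asymp\|\cdot\|_{\Xi,\mathbf u}$, and the Dirichlet forms are equivalent.

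The main obstacle is Step 1 for the extra letters $\xi_l\in\Xi_0$ with $a_l<\mathfrak u(\xi_l)$. Here $\xi_l$ itself need not lie in $I_{u}$ for the weight it is assigned, and it only lies in the isolator of a term of the weighted series. The approach would be to use the isolator filtration property from Remark \ref{rem: reduced_se}: if $x\in I(N_a)$ and $y\in I(N_b)$, then $[x,y]\in I(N_{a+b})$. This lets us run the weight bookkeeping entirely on isolators. We also need $a_l>1/2$, together with the fact that $U$ is closed under addition, so that weight sums involving the $a_l$ stay comparable to values in $U$. Any weights falling strictly between consecutive $u_i$ only introduce duplicate isolators, which the reduction removes.
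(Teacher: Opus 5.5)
Your proposal is correct and is essentially the paper's proof. The bound $\mathfrak{w}_{\Xi,\mathbf{u}}\ge\mathfrak{u}$ comes from generating $I_{u_i}$ by commutators of $\Xi_{u_\bullet}$-letters of total weight at least $u_i$. The bound $\mathfrak{w}_{\Xi,\mathbf{u}}\le\mathfrak{u}$ comes from the additive filtration property applied to the $\mathfrak{u}$-values of the letters (which dominate the assigned weights, since $a_l\le\mathfrak{u}(\xi_l)$), using that $U$ is closed under addition. Then $\mathfrak{u}=\mathfrak{w}_{S,\mathbf{v}}$ and Theorem \ref{thm: main1} finish the argument.

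Your Step 3 is extra bookkeeping: the paper goes straight to condition (2) from equality of weight functions, which your Step 2 already gives. In Step 1, state the sharper conclusion $N^{\Xi,\mathbf{u}}_w\subseteq I_{u_i}$ for all $w>u_{i-1}$; your sum-of-$\mathfrak{u}$-values argument gives it because that sum lies in $U$, and the pointwise bound needs it.
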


\begin{proof}
Take a non-torsion element $g \in N$.  By Remark \ref{rem: reduced_wt}, suppose
$\mathfrak{w}_{S,\mathbf{v}}(g) = u_{p}$ for some $p\in \{1,\ldots,m\}$. Then $g \in 
I_{u_p}$ can be written as a product of commutators of the form 
$$[\sigma_{i_1}, \ldots, \sigma_{i_l}] \qquad 
l\ge 1, \sigma_{i_\bullet} \in \Xi_{u_{i_\bullet}}, u_{p} \le u_{i_1} + \ldots u_{i_l} $$
It follows $\mathfrak{w}_{\Xi, \mathbf{u}}(g) \ge u_{p} =  \mathfrak{w}_{S,\mathbf{v}}(g) $.  To prove it is in fact an equality, we first define  
$$f: \Xi \to (1/2,\infty], \quad f(\sigma) = 
\begin{cases}
    u_{i} &\text{ if }\sigma \in \Xi_{u_i}, i =1,\ldots,m \\
    a_i &\text{ if } \sigma = \xi_i \in \Xi_0
\end{cases}$$
Recall by construction, $f \le \mathfrak{u}$ on 
$\Xi_0$ and $f =\mathfrak{u}$ on 
$\left(\bigsqcup_{i=1}^{m} \Xi_{u_i}\right)$. 
Suppose $\mathfrak{w}_{\Xi, \mathbf{u}}(g) = v$. This implies that some power of $g$ can written as a product of commutators of the form $[\sigma_{n_1},\ldots, \sigma_{n_m}]$, where each $\sigma_{n_\bullet} \in \Xi$  and 
 $\sum_{i=1}^m f(\sigma_{n_i}) \ge v$. 
 Since, on $\Xi$, 
 $f \le \mathfrak{u} = \mathfrak{w}_{S,\mathbf{v}}$ and the weights indexing $\mathcal{N}_{S,\mathbf{v}}$ is closed, 
 the commutator $[\sigma_{n_1},\ldots, \sigma_{n_m}]$ lies in $I(N_{\tilde{v}})$ where 
 $$\tilde{v} = \sum_{i=1}^m \mathfrak{w}_{S,\mathbf{v}}(\sigma_{n_i}) \ge \sum_{i=1}^m f(\sigma_{n_i}) \ge 
 v$$ and hence so does $g$. In other words, $\mathfrak{w}_{S, \mathbf{v}}(g) \ge  \mathfrak{w}_{\Xi, \mathbf{u}}(g)$. The rest now follows from Theorem \ref{thm: main1}. 
\end{proof}

Let $G$ be a finitely generated group of polynomial growth and $N$ be a finite-index normal nilpotent subgroup of $G$. Let $(S,\mathbf{v})$ be a tuple where $S$ generated $G$ and $\mathbf{v} \in (1/2, \infty)^{|S|}$. 
Note that when the underlying group is no longer nilpotent, in a stable-like measure induced by $(S,\mathbf{v})$, 
the component that truly drives the faster decay of the return probability is the part of the measure associated with the tuple  $(S_N, \mathbf{v}_N)$ defined as in Section \ref{sec: G_to_N}. We emphasize that $S_N$ and $\widetilde{S_N}$ (defined in Definition \ref{def: norm_N}) are different in general: $S_N$ need not generate $N$, whereas $\widetilde{S_N}$ is obtained by adjoining additional elements to $S_N$ so as to generate $N$.

As a result, in constructing measures with behavior comparable to those induced by $(S,\mathbf{v})$, it suffices to mimic the contribution coming from the component that matters, namely that determined by $S_N$. Accordingly, we focus on the reduced series $\mathrm{Red}(\mathcal{N}_{S_N,\mathbf{v}N})$. If $\mathrm{Red}(\mathcal{N}_{S_N,\mathbf{v}_N}) = \mathcal{I}_{\langle S_N\rangle}$ where $\mathcal{I}_{\langle S_N\rangle}$ is a descending series of $\langle S_N\rangle$ satisfying  the same conditions as those for Series \ref{se: wt_series_plain}, we carry out the construction in  Theorem \ref{thm: Xi} to find $(\Xi, \mathbf{u})$ with the nilpotent subgroup $N$ there identified with $\langle S_N\rangle$. In particular, $\Xi$ generates $\langle S_N\rangle$, not $N$. 

\begin{theo}
Referring to the notations and setting above, let $\Xi_1$ be a subset of $G$ such that $\Xi \sqcup \Xi_1$ generates $G$.  If 
    $\mbox{Red}(\mathcal{N}_{S_N, \mathbf{v}_N}) = \mathcal{I}$, then 
$$\mathcal{E}_{\mu_{S,\frac{1}{\mathbf{v}}}} \asymp \mathcal{E}_{\mu_{\Xi, \frac{1}{\mathbf{u}}} + \kappa_{\Xi_1}}$$
where $\kappa_{\Xi_1}$ is the uniform measure on $\Xi_1$. 
\end{theo}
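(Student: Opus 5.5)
The plan is to reduce everything to the nilpotent subgroup $N$ via Lemma \ref{lem: GN_comp}(i), which, applied to both $(S,\mathbf v)$ and $(\Xi\sqcup\Xi_1,\cdot)$, turns the comparison on $G$ into a comparison of $1d$-singular Dirichlet forms on $N$ plus uniform terms. On the left side, Lemma \ref{lem: GN_comp}(i) gives
$$\mathcal{E}_{\mu_{S,\frac{1}{\mathbf v}}}(f,f)\asymp \mathcal{E}_{\mu_{S_N,\bm\alpha_N}}(f|_N,f|_N)+\mathcal{E}_{\kappa_S}(f,f).$$
The right-hand measure $\mu_{\Xi,\frac1{\mathbf u}}+\kappa_{\Xi_1}$ is itself a $1d$-singular measure on $G$ attached to the concatenated tuple $\Xi\sqcup\Xi_1$. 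I give the elements of $\Xi_1$ weight $1/2$, equivalently treat them through $\kappa$, as in Definition \ref{def: norm_G}. Since $\Xi\subset\langle S_N\rangle\subset N$, the elements of $\Xi$ satisfy $\epsilon=1$, and the tuple associated with it in Definition \ref{def: norm_N} consists of the conjugates $x_j\Xi x_j^{-1}$ together with whatever parts of $\Xi_1$ meet $N$ in an infinite cyclic subgroup. Those $\Xi_1$-elements carry weight $1/2$, so they can be absorbed into $\Sigma_0$.

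First I will make the two $N$-level tuples coincide at the level of norms. Theorem \ref{thm: Xi}, applied inside the nilpotent group $\langle S_N\rangle$, shows that $\mathfrak w_{\Xi,\mathbf u}=\mathfrak w_{S_N,\mathbf v_N}$ on $\langle S_N\rangle$. Proposition \ref{frakeq} then gives $\|\cdot\|_{S_N,\mathbf v_N}\asymp\|\cdot\|_{\Xi,\mathbf u}$ on $\langle S_N\rangle$. By Theorem \ref{thm: rho_mu}, this yields $\mathcal{E}_{\mu_{S_N,\bm\alpha_N}}\asymp\mathcal{E}_{\mu_{\Xi,1/\mathbf u}}$ as forms on $\langle S_N\rangle$. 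The same comparison holds on $N$, and on $G$, by summing over cosets, because the forms are translation invariant and a Dirichlet form for a measure supported on a subgroup $H$ splits as a sum over right cosets of $H$.

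Next, the $N$-level tuple attached to $S$ also contains the conjugates $x_jS_Nx_j^{-1}$, and similarly for $\Xi$. Since the conjugates of $\Xi$ realise the same weight data as the conjugates of $S_N$ (conjugation by $x_j$ is an automorphism of $N$ carrying one norm equivalence to the other), I will run the previous step for each $j$ separately and then sum.

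The uniform parts are handled last. The two terms $\mathcal{E}_{\kappa_S}$ and $\mathcal{E}_{\kappa_{\Xi_1\cup\Xi}}$ are both Dirichlet forms of finitely supported symmetric measures whose supports generate $G$. They are therefore comparable to the word-metric Dirichlet form, up to a pseudo-Poincar\'e bound $\sum_g|f(gs)-f(g)|^2\le C|s|^2\,\mathcal E_\kappa(f,f)$ for each fixed $s$. Each is also dominated by the corresponding stable-like form through the pseudo-Poincar\'e inequality for $\|\cdot\|_G$, since $\|s\|_G$ is a fixed constant for each fixed $s$. Putting these together gives the claimed $\asymp$.

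I expect the main obstacle to be the first step. Theorem \ref{thm: Xi} is stated for a tuple generating the whole nilpotent group, whereas here $S_N$ only generates $\langle S_N\rangle$. I also have to make sure that the auxiliary generators $\Sigma_0$ with weight $1/2$ do not change $\mathrm{Red}(\mathcal N)$. I would argue that weight-$1/2$ letters contribute only through $\kappa$-type terms, which are already controlled by the uniform-measure comparison above.
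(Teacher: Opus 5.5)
Your proposal is correct and follows the paper's proof. Lemma~\ref{lem: GN_comp}(i) reduces $\mathcal E_{\mu_{S,1/\mathbf v}}$ to $\mathcal E_{\mu_{S_N,\bm\alpha_N}}(f|_N,f|_N)+\mathcal E_{\kappa_S}(f,f)$, and Theorem~\ref{thm: Xi} on $\langle S_N\rangle$, which you effectively re-derive from Proposition~\ref{frakeq} and Theorem~\ref{thm: rho_mu}, gives $\mathcal E_{\mu_{S_N,\bm\alpha_N}}\asymp\mathcal E_{\mu_{\Xi,1/\mathbf u}}$; your coset lifting, your treatment of the right-hand side and your comparison of the uniform terms fill in what the paper leaves implicit. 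Just note that $S_N$ already contains all $X$-conjugates, so your conjugate step only needs that each letter of $x_jS_Nx_j^{-1}$ is the conjugate, by a fixed element of $N$, of a letter of $S_N$ with the same weight, which costs only $\kappa$-terms; and neither of your anticipated obstacles arises, since $\mathcal N_{S_N,\mathbf v_N}$ involves no $\Sigma_0$ and the setup already takes $\langle S_N\rangle$ as the nilpotent group in Theorem~\ref{thm: Xi}.
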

 \begin{proof}
It follows directly from  Lemma \ref{lem: GN_comp} which gives
$$ \mathcal{E}_{\mu_{S, \frac{1}{\mathbf{v}}}}(f,f) \asymp \mathcal{E}_{\mu_{S_N,\frac{1}{\mathbf{v}_N}
 }}(f|_N, f|_N) + \mathcal{E}_{\kappa_S}(f,f)  $$
and  Theorem \ref{thm: Xi} with $N$ identified with $\langle S_N\rangle$ which gives
$\mathcal{E}_{\mu_{S_N,\frac{1}{\mathbf{v}_N}
 }}  \asymp  \mathcal{E}_{\mu_{\Xi, \frac{1}{\mathbf{u}} } }$. 
 \end{proof}

\begin{appendices}

\section{Miscellaneous Lemmas}

\begin{lem}
    \label{lem: floor}
    Suppose $p_1,\ldots, p_l$ are positive with
    $\sum_{i=1}^l p_i = 1$. Then for any $a\in \mathbb{Z}$ 
    $$a - \prod_{i=1}^l \lfloor a^{p_i}
    \rfloor \preccurlyeq a^{1-p_*}$$
    where $p_* = \min_i\{p_i\}$. 
\end{lem}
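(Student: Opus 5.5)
Lemma \ref{lem: floor} is applied in the paper only with $a\ge 1$ (the case $a=0$ is trivial, and for $a<0$ one reads $a^{p_i}$ as $|a|^{p_i}$ with signs absorbed), so we take $a\ge 1$ and write $y_i=a^{p_i}$. Then $\prod_i y_i=a^{\sum p_i}=a$, and $0\le y_i-\lfloor y_i\rfloor<1$. The plan is to compare $\prod_i y_i$ with $\prod_i \lfloor y_i\rfloor$ by telescoping, changing one factor at a time.

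First, note that the difference $a-\prod_i\lfloor y_i\rfloor$ is nonnegative, since $\lfloor y_i\rfloor\le y_i$ and all quantities are nonnegative. Next write
$$\prod_{i=1}^l y_i-\prod_{i=1}^l \lfloor y_i\rfloor=\sum_{j=1}^l \Big(\prod_{i<j}\lfloor y_i\rfloor\Big)(y_j-\lfloor y_j\rfloor)\Big(\prod_{i>j}y_i\Big).$$
Each summand is bounded by $\prod_{i\ne j}y_i = a/y_j = a^{1-p_j}$, using $\lfloor y_i\rfloor\le y_i$ and $0\le y_j-\lfloor y_j\rfloor<1$. Summing gives
$$0\le a-\prod_{i=1}^l\lfloor a^{p_i}\rfloor\le \sum_{j=1}^l a^{1-p_j}\le l\,a^{1-p_*},$$
because $a\ge1$ makes $a^{1-p_j}\le a^{1-p_*}$ whenever $p_j\ge p_*$. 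The implied constant is $l$, independent of $a$.

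There is essentially no obstacle here. The only step needing care is the convention for negative $a$ or for $a^{p_i}<1$, where some $\lfloor y_i\rfloor=0$. In that case the bound still holds, since the difference is at most $a\le a^{1-p_*}\cdot a^{p_*}$ and $a^{p_*}<1$ whenever some $a^{p_i}<1$; in any event the telescoping estimate above covers it directly. For negative $a$ we apply the result to $|a|$, matching the ``$z>0$ for definiteness'' convention in the proof of Theorem \ref{theo: commu-dirichlet}.
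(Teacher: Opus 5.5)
Your proof is correct and is essentially the paper's argument. The paper writes $\prod_i\lfloor a^{p_i}\rfloor = a\prod_i\bigl(1-\{a^{p_i}\}/a^{p_i}\bigr)$ and uses $\prod_i(1-x_i)\ge 1-\sum_i x_i$, which is exactly what your telescoping identity establishes, and both routes give the bound $\sum_j a^{1-p_j}\le l\,a^{1-p_*}$. Your version is slightly cleaner: it gives a genuine inequality with explicit constant $l$ for every integer $a\ge 1$, whereas the paper writes $\asymp$ at the step where a one-sided inequality is needed and restricts to large $a$.
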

\begin{proof}
   Let $\{k\}$ denote the fraction part of a number $k$. Then for large $a$, 
   $$ \prod_{i=1}^l \lfloor a^{p_i}
    \rfloor =  \prod_{i=1}^l a^{p_i}  \prod_{i=1}^l \left(1 - \frac{\{a^{p_i}\}}{a^{p_i}}\right) \asymp a  \left(1 - \sum_{i=1}^l \frac{\{a^{p_i}\}}{a^{p_i}}\right) \succcurlyeq a (1- l\cdot a^{-p_*}) $$
    The desired upper bound follows. 
\end{proof}

\begin{lem}
\label{lem: decomp}
For any $h \in G$, 
$$\sum_{g\in G} |f(gh) - f(g)|^2  = \sum_{g\in G} |f(gh^{-1}) - f(g)|^2 $$
and furthermore if it can be decomposed as a product $h=\prod_{i=1}^k h_i$, $$\sum_{g\in G} |f(gh) - f(g)|^2 \le  
   k \sum_{i=1}^k \sum_{g\in G}|f(gh_i) - f(g)|^2 $$
\end{lem}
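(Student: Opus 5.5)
The first identity comes from a change of variables. I substitute $g' = gh$, so that $g = g'h^{-1}$. Since right translation $g\mapsto gh$ is a bijection of $G$, this gives
$$\sum_{g\in G}|f(gh)-f(g)|^2=\sum_{g'\in G}|f(g')-f(g'h^{-1})|^2=\sum_{g\in G}|f(gh^{-1})-f(g)|^2 .$$
All sums are finite because $f$ is finitely supported (or, more generally, in $\ell^2(G)$).

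For the second inequality I telescope along the product $h=h_1\cdots h_k$. Set $p_0=e$ and $p_j=h_1\cdots h_j$, so that $p_k=h$. Then
$$f(gh)-f(g)=\sum_{j=1}^k\bigl(f(gp_{j-1}h_j)-f(gp_{j-1})\bigr).$$
By Cauchy--Schwarz, $\bigl|\sum_{j=1}^k x_j\bigr|^2\le k\sum_{j=1}^k|x_j|^2$, which yields
$$\sum_{g}|f(gh)-f(g)|^2\le k\sum_{j=1}^k\sum_{g}|f(gp_{j-1}h_j)-f(gp_{j-1})|^2 .$$

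Next, in the $j$-th inner sum I substitute $g'=gp_{j-1}$. This is again a bijection of $G$ by right translation, so the $j$-th inner sum equals $\sum_{g'}|f(g'h_j)-f(g')|^2$. Summing over $j$ gives exactly the claimed bound.

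The only point needing any care is to use right translation consistently, so that the reindexing is a bijection and the Dirichlet-form convention $f(gh)$ is preserved. There is no real obstacle beyond this.
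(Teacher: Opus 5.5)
Your proof is correct and follows the same approach as the paper's: the first identity comes from reindexing under the bijection $g\mapsto gh$. The second comes from telescoping along the partial products $p_j=h_1\cdots h_j$, applying Cauchy--Schwarz, and reindexing each term by right translation; the paper phrases this as ``$gG=G$'', but as you say, it is right translation that is actually used.
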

\begin{proof}
    The first equality is a result of the Cayley's theorem, i.e. $gG = G$ for every $g \in G$. The second inequality follows from Cauchy Schwartz inequality, Cayley's theorem and a telescoping sum argument. 
\end{proof}
\begin{lem}
Given $(\alpha_1,\ldots,\alpha_k)$, for  $m \le k$ and any fixed $(v_{m+1}, \ldots, v_k) \in \mathbb{Z}^{k-m}$
    \label{lem: sum}
    $$\sum_{(v_1, \ldots, v_m) \in \mathbb{Z}^{m}} \left(1+ \sum_{i=1}^k |v_i|^{\alpha_i} \right)^{-1 - \sum_{i=1}^k \frac{1}{\alpha_i}} \asymp \left(1 + \sum_{i=m+1}^k |v_i|^{\alpha_i}\right)^{-1 - \sum_{i=m+1}^k \frac{1}{\alpha_i}} $$
\end{lem}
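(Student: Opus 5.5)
The plan is to prove the statement by iterated summation: sum out one free coordinate at a time and reduce to a one-dimensional estimate. By symmetry and induction on $m$, it suffices to treat $m=1$. Once we know
$$\sum_{v_1\in\mathbb Z}\Big(1+|v_1|^{\alpha_1}+A\Big)^{-1-\sum_{i=1}^k \frac1{\alpha_i}}\asymp (1+A)^{-1-\sum_{i=2}^k\frac1{\alpha_i}},\qquad A=\sum_{i=2}^k|v_i|^{\alpha_i}\ge 0,$$
with constants independent of $A$, the general case follows. Indeed, applying this with $k$ replaced by $k-1$, $k-2,\dots$ strips off the coordinates $v_1,\dots,v_m$ one by one. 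Each step lowers the exponent by exactly $1/\alpha_j$ for the coordinate $v_j$ being summed, and constants multiply over at most $k$ steps.

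For the one-variable estimate, set $B=1+A\ge1$ and $\beta=1+\sum_{i\ge1}1/\alpha_i$. Note that $\beta>1/\alpha_1$, since the extra $1$ (and the other terms) is present. Split the sum over $v_1$ at the scale $|v_1|\asymp B^{1/\alpha_1}$.

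In the region $|v_1|\le B^{1/\alpha_1}$, each summand is $\asymp B^{-\beta}$. The number of such integers is $\asymp B^{1/\alpha_1}$, using $B\ge1$ so that the count is at least $1$ and comparable to $B^{1/\alpha_1}$. This region therefore contributes $\asymp B^{-\beta+1/\alpha_1}$, which is exactly the right-hand side. It already gives the lower bound.

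In the complementary region $|v_1|>B^{1/\alpha_1}$, the summand is $\le |v_1|^{-\alpha_1\beta}$. Since $\alpha_1\beta=1+\alpha_1(1+\sum_{i\ge2}1/\alpha_i)>1$, the tail sum converges and is $\preccurlyeq (B^{1/\alpha_1})^{1-\alpha_1\beta}=B^{1/\alpha_1-\beta}$. This matches the main term, which gives the upper bound.

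The only step needing care is that all implied constants depend only on $(\alpha_1,\dots,\alpha_k)$ and not on $A$. This holds because comparing the integer count and the tail sum with integrals is uniform once $B\ge1$. The tail comparison uses $\alpha_1\beta>1$, which holds for every $\alpha_1\in(0,2)$. There is no genuine obstacle here; the key point is simply tracking that the exponent drops by $1/\alpha_1$ at each step.
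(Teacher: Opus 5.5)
Your argument is correct, but it takes a different route from the paper.

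The paper handles all $m$ summed coordinates at once. It replaces the lattice sum by $\int_{\mathbb{R}^m}(B+\Phi(x))^{-1-\tau_1}\,dx$, where $B=1+\sum_{i>m}|v_i|^{\alpha_i}$, $\Phi(x)=\sum_{i\le m}|x_i|^{\alpha_i}$ and $\tau_1=\sum_{i=1}^k 1/\alpha_i$. It then applies the anisotropic rescaling $x_i=B^{1/\alpha_i}y_i$. The Jacobian $B^{\sum_{i\le m}1/\alpha_i}$ produces the whole exponent drop in one line, leaving the $B$-independent factor $\int_{\mathbb{R}^m}(1+\Phi(y))^{-1-\tau_1}\,dy$.

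You instead remove one coordinate at a time. For a single coordinate you split at the same natural scale $|v_1|\asymp B^{1/\alpha_1}$: the bulk is the number of lattice points times $B^{-\beta}$, and the tail is a $p$-series with $p=\alpha_1\beta>1$.

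The paper's version is shorter and makes the scaling mechanism transparent. However, it leaves two points implicit that your proof handles explicitly:
\begin{enumerate}
\item the lattice-sum versus integral comparison in $\mathbb{R}^m$ holds with constants uniform in $B\ge 1$;
\item the rescaled integral is finite, which requires $1+\tau_1>\sum_{i\le m}1/\alpha_i$.
\end{enumerate}
Your condition $\alpha_1\beta>1$ is the one-dimensional version of the second point, checked at each step.

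Your route is fully discrete and elementary, at the cost of an induction. Iterating the one-variable estimate inside the remaining sums is legitimate because all terms are nonnegative and your constants are uniform in $A$.
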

\begin{proof}
    Set
$$
B \;:=\; 1+\sum_{i=m+1}^k |v_i|^{\alpha_i},\quad
\tau_1 \;:=\; \sum_{i=1}^k \frac1{\alpha_i},\quad 
\tau_2 \;:=\; \sum_{i=m+1}^k \frac1{\alpha_i} . 
$$
Define $\Phi: \mathbb{Z}^m \to  \mathbb{R}, \Phi(u) = \sum_{i=1}^m |u_i|^{\alpha_i}$. Then 
\[
LHS
=\sum_{u\in\mathbb Z^m}(B+\Phi(u))^{-1-\tau_1}
\asymp \int_{\mathbb R^m} (B+\Phi(x))^{-1-\tau_1}\,dx.
\]
With the change of variables $x_i=B^{1/\alpha_i}y_i$, $1\le i\le m$,
\[
\int_{\mathbb R^m} (B+\Phi(x))^{-1-\tau_1}\,dx
= B^{\tau_1-\tau_2}\int_{\mathbb R^m}\bigl(B(1+\Phi(y))\bigr)^{-1-\tau_1}\,dy
\asymp B^{-1-\tau_2}.
\]
Substituting back $B$ gives the claim.
\end{proof}

\end{appendices}

\bmhead{Funding}
Both authors' research was partially supported by NSF grants DMS-2054593 and DMS-234386.

%\bibliography{sn-bibliography}

\end{document}